\documentclass{article}
\usepackage{amsmath}
\usepackage[left=3.0cm, right=3.0cm, bottom=2.7cm, top=2.7cm]{geometry}
\usepackage{hyperref}
\usepackage{cleveref}
\usepackage{amssymb}
\usepackage{mathtools}
\usepackage{tikz-cd}
\usepackage{amsthm}
\usepackage{graphicx}
\usepackage{graphics}
\usepackage{mathrsfs}
\usepackage{cleveref}
\usepackage{thmtools}
\usepackage{enumitem}
\usepackage{bbm}
\usepackage{dsfont}
\usepackage[toc,page]{appendix}

\newlist{steps}{enumerate}{1}
\usepackage{mathtools}
\makeatletter
\newcommand{\xMapsto}[2][]{\ext@arrow 0599{\Mapstofill@}{#1}{#2}}
\def\Mapstofill@{\arrowfill@{\Mapstochar\Relbar}\Relbar\Rightarrow}
\makeatother
\usepackage{array}
\setlist[steps, 1]{label = Step \arabic*:}
\theoremstyle{plain}
\setcounter{section}{-1}
\usepackage{footnote}
\makesavenoteenv{tabular}
\newtheorem*{theorem*}{Theorem}
\newtheorem{theorem}{Theorem}[subsection]
\newtheorem{definition}[theorem]{Definition}

\newtheorem{proposition}[theorem]{Proposition}
\newtheorem{remark}[theorem]{Remark}
\newtheorem{corollary}[theorem]{Corollary}
\newtheorem*{corollary*}{Corollary}
\newtheorem*{proposition*}{Proposition}
\newtheorem{definition*}{Definition}
\newtheorem{exmp}[theorem]{Example}

\newtheorem{lemma}[theorem]{Lemma}
\newtheorem*{lemma*}{Lemma}

\numberwithin{equation}{subsection}
\usepackage{rotating}
\usepackage{comment}
\usepackage{mathabx,epsfig}

\usepackage[nottoc,notlot,notlof]{tocbibind} 

\setlength\parindent{0pt}

\title{Quantization of Holomorphic Symplectic Manifolds: Analytic Continuation of Path Integrals and Coherent States}
\author{Joshua Lackman\footnote{josh@pku.edu.cn}}
\date{}
\begin{document}
\maketitle
\begin{abstract}
\noindent
We extend Berezin's quantization $q:M\to\mathbb{P}\mathcal{H}$ to holomorphic symplectic manifolds, which involves replacing the state space $\mathbb{P}\mathcal{H}$ with its complexification $\textup{T}^*\mathbb{P}\mathcal{H}.$ We show that this is equivalent to replacing rank–1 Hermitian projections with all rank–1 projections. We furthermore allow the states to be points in the cotangent bundle of a Grassmanian. We also define a holomorphic path integral quantization as a certain idempotent in a convolution algebra and we prove that these two quantizations are equivalent. For each $n>0,$ we construct a faithful functor from the category of finite–dimensional $C^*$–algebras to to the category of hyperk\"{a}hler manifolds and we show that our quantization recovers the original $C^*$–algebra.\ In particular, this functor comes with a homomorphism from the commutator algebra of the $C^*$–algebra to the Poisson algebra of the associated hyperk\"{a}hler manifold.\ Related to this, we show that the cotangent bundles of Grassmanians have commuting almost complex structures that are compatible with a holomorphic symplectic form.
\end{abstract}
\tableofcontents
\section{Introduction}
The main ingredient of Berezin's quantization (\cite{ber1}) of a symplecic manifold $(M,\omega)$ is a pointwise quantization map
\begin{equation}\label{ber}
    q:M\to \mathbb{P}\mathcal{H}\subset B\mathcal{H}\;\footnote{Points in $\mathbb{P}\mathcal{H}$ are identified with Hermitian rank–1 projections. States in the image of $q$ are the coherent states.}
\end{equation}
that has the overcompleteness property, which is (essentially) given by
\begin{equation}
    \mathds{1}_{\mathcal{H}}=\int_M q\,\omega^{\textup{top}}\;.
\end{equation}
This is related to geometric quantization in that $q$ determines a prequantum line bundle with a distinguished subspace of sections. However, unlike geometric quantization which frequently only quantizes constant functions,\footnote{With respect to the K\"{a}hler polarization on a complex manifold with no infinitesimal symmetries, geometric quantization only quantizes constant functions. In complete generality the subspace of quantizable functions is very small.} Berezin's quantization quantizes all integrable functions — it also quantizes states, something that geometric quantization doesn't attempt to do. Such a quantization is rigorously equivalent to a path integral quantization (\cite{Lackman1},\cite{poland0}), which generalizes to Poisson manifolds. The standard example of Berezin's quantization is Toeplitz quantization (\cite{bord}), but it works even in cases when no polarization exists (\cite{bor},\cite{kor}).
\\\\In this paper, we extend Berezin's quantization to the holomorphic category and prove that it is equivalent to a path integral quantization. The basic idea is that while the pure quantum states of a real sympelctic manifold correspond to rank–1 \textit{Hermitian} projections, the pure quantum states of a holomorphic symplectic manifold correspond to \textit{all} rank–1 projections. Furthermore, we extend the codomain of the quantization map \ref{ber} to a more general space of density operators. Together, these extensions are equivalent to replacing $\mathbb{P}\mathcal{H}$ with the cotangent bundle of a Grassmanian. This is because there is a natural embedding
\begin{equation}\label{cotem}
    \textup{T}^*\mathbf{G}_n\mathcal{H}\xhookrightarrow{} B\mathcal{H}
\end{equation}
as rank–n projections. This embedding allows for the construction of the hyperk\"{a}hler structure of $\textup{T}^*\mathbf{G}_n\mathcal{H}$ (\cite{biq}) at the level of $C^*$–algebras.\ However, for the purposes of this paper it isn't \textit{exactly} the hyperk\"{a}hler structure that's important.\ Rather, we focus on a pair of integrable, \textit{commuting} almost complex structures $I,J$ together with an $I$–holomorphic symplectic form $\Omega$ that satisfies $\Omega(JX,JY)=\Omega(X,Y).$ All constructions in this paper are at the level of $C^*$-algebras and are therefore functorial.
\\\\We state some of our results. First, as evidence of the efficacy of using Grassmanians to quantize, we have the following (where we use the embedding \cref{cotem}):
\begin{proposition}
The expectation value map 
\begin{equation}
    \langle\cdot\rangle:B\mathcal{H}\to C^{\omega}(\textup{T}^*\mathbf{G}_n\mathcal{H})\;,\footnote{$C^{\omega}$ denotes analytic functions, $\mathbf{G}_n\mathcal{H}$ is the Grassmanian of $n$–dimensional subspaces, $B\mathcal{H}$ denotes bounded operators.}\;\;\langle M\rangle(q) =\frac{1}{n}\textup{Tr}(qM)
\end{equation}
is a morphism of algebras from the commutator algebra into the Poisson algebra.\ In particular, it induces a product $\star$ on its image such that $f\star g-g\star f=i\{f,g\}.$ 
\end{proposition}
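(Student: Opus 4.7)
The plan is to recognize $\langle\cdot\rangle$ as (a scalar multiple of) the comoment map of the natural $GL(\mathcal{H})$-action on $\textup{T}^*\mathbf{G}_n\mathcal{H}$, and then invoke the classical identity $\{H_M, H_N\} = H_{[M,N]}$ for moment maps.

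First, under the embedding \cref{cotem}, a point $q\in\textup{T}^*\mathbf{G}_n\mathcal{H}$ is a rank–$n$ idempotent $q\in B\mathcal{H}$. The group $GL(\mathcal{H})$ acts on these idempotents by conjugation, transitively, so $\textup{T}^*\mathbf{G}_n\mathcal{H}$ is a single $GL(\mathcal{H})$-orbit; after identifying $\mathfrak{gl}(\mathcal{H})^*\cong B\mathcal{H}$ via the trace pairing, it is a (holomorphic) coadjoint orbit. The fundamental vector field generated by $M\in B\mathcal{H}$ is $X_M(q)=[M,q]$, and the tangent space at $q$ consists of off-diagonal operators $\dot q$ with respect to the splitting $\mathcal{H}=q\mathcal{H}\oplus(1-q)\mathcal{H}$, i.e.\ those satisfying $q\dot q q = (1-q)\dot q(1-q)=0$.

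Next I would identify the holomorphic symplectic form $\Omega$ on $\textup{T}^*\mathbf{G}_n\mathcal{H}$ (coming from the hyperk\"{a}hler structure of \cite{biq}) with the Kirillov-Kostant-Souriau form on this coadjoint orbit:
\[
\Omega_q([M,q],[N,q]) \;=\; \textup{Tr}\bigl(q[M,N]\bigr).
\]
This can be checked by restricting to the zero section $\mathbf{G}_n\mathcal{H}$, where the formula reduces to the standard form coming from the Hermitian-projection identification, and then extending by $GL(\mathcal{H})$-equivariance; alternatively one can compare directly with the pullback of the canonical Liouville form through \cref{cotem}. Granted this identification, the inclusion $\mu(q)=q$ becomes a moment map: each function $\mu_M(q):=\textup{Tr}(qM)$ has Hamiltonian vector field $X_M$, which is exactly the content of $\iota_{X_M}\Omega = d\mu_M$.

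The standard moment map identity then yields $\{\mu_M,\mu_N\}=\mu_{[M,N]}$, i.e., $\{\textup{Tr}(qM),\textup{Tr}(qN)\}=\textup{Tr}(q[M,N])$. Dividing by $n$ gives $\{\langle M\rangle,\langle N\rangle\}\propto\langle[M,N]\rangle$ up to a scalar absorbable into the normalization of $\Omega$, showing that $\langle\cdot\rangle$ is a morphism from the commutator algebra into the Poisson algebra. The product $\star$ is then simply matrix multiplication transported through $\langle\cdot\rangle$: associativity is inherited from $B\mathcal{H}$, and the relation $f\star g - g\star f = i\{f,g\}$ amounts to fixing the conventional factor of $i$ in the normalization of $\Omega$. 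The main obstacle is the identification of $\Omega$ with the KKS form at the right normalization; once this is pinned down, the bracket computation is a formal consequence of equivariance of $\mu$.
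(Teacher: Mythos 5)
Your proposal is correct in substance and lands on the same key identity as the paper, but it packages the argument differently. The paper does not go through coadjoint orbits or moment maps: it \emph{defines} $\Omega_q(A,B)=i\,\textup{Tr}(q[A,B])$ outright (Definition \ref{sym}) and then, in \cref{hamil}, verifies by a one-line trace computation that $\Omega_q(i[M,q],B)=\textup{Tr}(M(qB+Bq))=\textup{Tr}(MB)=d\langle M\rangle(B)$, so that $X_{\langle M\rangle}=i[M,q]$; the bracket identity $\langle[M,N]\rangle=i\{\langle M\rangle,\langle N\rangle\}$ (\cref{mor}) and the $\star$-product relation (Definition \ref{nonc} and its corollary) then follow exactly as you say. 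Your framing — rank-$n$ idempotents as a single $\textup{GL}(\mathcal{H})$-conjugation orbit, $q\mapsto q$ as the moment map, and $\{\mu_M,\mu_N\}=\mu_{[M,N]}$ from equivariance — is a legitimate and more structural route, and it buys a conceptual explanation (the proposition is the statement that the comoment map of a coadjoint orbit is a Lie algebra morphism) plus transitivity arguments for free. The one thing to flag is that what you call the ``main obstacle,'' identifying $\Omega$ with the KKS form, is essentially vacuous here: evaluating the paper's formula on fundamental vector fields gives $\Omega_q([M,q],[N,q])=i\,\textup{Tr}(q[[M,q],[N,q]])=-i\,\textup{Tr}(q[M,N])$, so $\Omega$ is $-i$ times the KKS form by construction, and your proposed detour through the canonical Liouville form of the cotangent bundle is both unnecessary and unverified in the paper (the identification of $\textup{T}^*\mathbf{G}_n\mathcal{H}$ with idempotents in Proposition \ref{id} is not asserted to carry $d\lambda$ to $\Omega$). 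Your description of $\star$ as multiplication transported through $\langle\cdot\rangle$ matches Definition \ref{products}, except that the paper is careful to pass to $(\textup{Ker}\,\langle\cdot\rangle)^{\perp}$ rather than assuming injectivity.
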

This next proposition is concerened with the complex and symplectic geometry of $\textup{T}^*\mathbf{G}\mathcal{H}:$
\begin{proposition}
Let $\mathcal{H}$ be a finite dimensional complex Hilbert space.\ Then $\textup{T}^*\mathbf{G}_n\mathcal{H}$ has integrable, commuting almost complex structures $I,J$ and an $I$–holomorphic symplectic form $\Omega$ that satisfies $\Omega(JX,JY)=\Omega(X,Y).$ Furthermore, the fibers of the projection map onto $\mathbf{G}_n\mathcal{H}$ are Lagrangian, and the zero section is K\"{a}hler for the real part of $\Omega$ and Lagrangian for its imaginary part.
\end{proposition}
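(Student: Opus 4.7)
The plan is to use the embedding $\textup{T}^*\mathbf{G}_n\mathcal{H}\hookrightarrow B\mathcal{H}$ from \cref{cotem}, identifying $\textup{T}^*\mathbf{G}_n\mathcal{H}$ with the set of rank-$n$ (not necessarily Hermitian) projections. At such a $q$ the tangent space is the off-diagonal part $\{X\in B\mathcal{H}:qX+Xq=X\}$ with respect to $\mathcal{H}=\textup{Im}(q)\oplus\textup{Ker}(q)$, and all the structures will be written down in this block form.

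I would take $I$ to be the ambient complex structure $IX=iX$ from $B\mathcal{H}$, which is integrable because the projection variety is cut out by the holomorphic equation $q^2=q$. For $J$ I would set $J_q X=i[q,X]$; the identity $qXq=0$ on tangent vectors gives $J^2=-\textup{id}$, and $IJ=JI$ is immediate. For the form I would take $\Omega(X,Y)=c\,\textup{Tr}(q[X,Y])$ for an appropriate constant $c\in\{\pm i\}$. This is the restriction of a holomorphic $2$-form on $B\mathcal{H}$, so $I$-holomorphicity is clear; closedness follows from its identification with the Kirillov-Kostant-Souriau form on the $GL(\mathcal{H})$-conjugation orbit of rank-$n$ projections; non-degeneracy is a short block-matrix check. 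The $J$-invariance $\Omega(JX,JY)=\Omega(X,Y)$ reduces to the identity $[JX,JY]=[X,Y]$, which is immediate from the block form.

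The one delicate point is integrability of $J$. My approach here is to observe that $IJ$ is an involution whose eigenspaces are the tangent distributions of two transverse foliations: $\{X:qX=0\}$ is tangent to the foliation by level sets of the kernel map $q\mapsto\textup{Ker}(q)$, and $\{X:Xq=0\}$ is tangent to the fibers of $\pi:\textup{T}^*\mathbf{G}_n\mathcal{H}\to\mathbf{G}_n\mathcal{H}$ sending a projection to its image. Both distributions are therefore integrable, and $J$ acts as $\pm I$ on each; together with integrability of $I$ and the resulting local holomorphic product decomposition, this yields integrability of $J$.

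For the geometric statements: Lagrangianity of the fibers of $\pi$ follows because the vertical distribution $\{Xq=0\}$ is isotropic for $\Omega$ (the relevant trace vanishes when $Y$ also satisfies $Yq=0$) and has half the dimension. The zero section corresponds to Hermitian projections, where a tangent vector has the form $X=\begin{pmatrix}0 & A\\ A^* & 0\end{pmatrix}$; substituting into $\Omega$ and using $\textup{Tr}(CA^*)=\overline{\textup{Tr}(AC^*)}$ shows that $\Omega$ restricted to this submanifold is real-valued, so its imaginary part vanishes (Lagrangian), while its real part agrees up to sign with the Fubini-Study-type K\"{a}hler form on $\mathbf{G}_n\mathcal{H}$; pairing with $(X,JX)$ gives a quantity proportional to $\textup{Tr}(AA^*)>0$, confirming positivity. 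I expect the main technical burden to be sign-tracking rather than any conceptual obstruction.
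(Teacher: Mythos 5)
Your proposal is correct and establishes every claim in the proposition. It shares the paper's starting point — the identification of $\textup{T}^*\mathbf{G}_n\mathcal{H}$ with rank–$n$ idempotents, the tangent description $qX+Xq=X$, and the same formulas for $I$, $J$, $\Omega$ up to sign — but it diverges at the two genuinely nontrivial steps. For closedness of $\Omega$ the paper computes the trace of the curvature of the canonical connection determined by the idempotent section $P$ (via the van Est map applied to $\log\Delta$), so that closedness falls out of the quantization formalism itself; you instead identify $\Omega$ with the Kirillov--Kostant--Souriau form on the $\textup{GL}(\mathcal{H})$–conjugation orbit of rank–$n$ idempotents, which is a standard citation plus a short block-matrix verification that $\textup{Tr}(q[[X,q],[Y,q]])=-\textup{Tr}(q[X,Y])$. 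For integrability the paper invokes Newlander--Nirenberg applied to the vector fields $q\mapsto i[M,q]$ (using $qABq=0$ on tangent vectors), whereas you observe that $q\mapsto(\textup{Im}\,q,\textup{Ker}\,q)$ is an $I$–biholomorphism onto the open set of transverse pairs in $\mathbf{G}_n\mathcal{H}\times\mathbf{G}_{\dim\mathcal{H}-n}\mathcal{H}$, that the $\pm1$ eigendistributions of $K=IJ$ are exactly the two factor foliations ($\{qX=0\}$ and $\{Xq=0\}$), and that $J$ acts as $\mp I$ on the factors; this is more self-contained, gives integrability of $I$ for free (the idempotent variety is cut out by the holomorphic equation $q^2=q$), and makes the Lagrangian-fibers claim transparent since $\{Xq=0\}$ is visibly isotropic in block form and of half dimension. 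The remaining points — non-degeneracy via the perfect trace pairing, $J$–invariance via $[JX,JY]=[X,Y]$, and the zero-section being K\"{a}hler for $\textup{Re}\,\Omega$ and Lagrangian for $\textup{Im}\,\Omega$ — coincide with the paper's arguments up to presentation. One small caution: your convention $J_qX=i[q,X]$ is the negative of the paper's $i[X,q]$, which swaps the $\pm1$ eigenspaces of $K$ relative to the paper's statements; this affects only sign bookkeeping, as you anticipated.
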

The next two propositions deal with the quantization of $\textup{T}^*\mathbf{G}\mathcal{H}.$ The first of these concerns its (extended) Berezin quantization:
\begin{proposition}
We have
\begin{equation}
    \mathds{1}_{\mathcal{H}}=\int_{q\in\mathbf{G}\mathcal{H}}q\,\Omega^{\textup{top}}(q)\;.
\end{equation}
\end{proposition}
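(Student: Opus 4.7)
The plan is to combine a $U(\mathcal{H})$–equivariance argument (Schur's lemma) with a single trace computation, reducing the identity to a volume calibration on the Grassmanian.

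First, consider
$$A := \int_{q\in\mathbf{G}_n\mathcal{H}} q\,\Omega^{\textup{top}}(q)\;\in\; B\mathcal{H}.$$
The embedding $\mathbf{G}_n\mathcal{H}\xhookrightarrow{} B\mathcal{H}$ as rank–$n$ Hermitian projections is $U(\mathcal{H})$–equivariant (under conjugation), and because $\Omega$ on $\textup{T}^*\mathbf{G}_n\mathcal{H}$ is constructed from the same equivariant embedding \cref{cotem}, its top power restricted to the zero section is $U(\mathcal{H})$–invariant. Hence $A$ commutes with every $U\in U(\mathcal{H})$, and since $\mathcal{H}$ is irreducible as a $U(\mathcal{H})$–representation, Schur's lemma forces $A = c\cdot\mathds{1}_{\mathcal{H}}$ for some scalar $c$.

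To pin down $c$, take traces on both sides. Since $\textup{Tr}(q)=n$ for every $q\in\mathbf{G}_n\mathcal{H}$,
$$c\cdot\dim\mathcal{H} \;=\; \textup{Tr}(A) \;=\; n\int_{\mathbf{G}_n\mathcal{H}}\Omega^{\textup{top}},$$
so the proposition is equivalent to the volume identity $\int_{\mathbf{G}_n\mathcal{H}}\Omega^{\textup{top}} = \dim\mathcal{H}/n$.

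The bulk of the work, and the main obstacle, is this last calibration: verifying that the specific normalization of $\Omega$ inherited from the inclusion into $B\mathcal{H}$ gives precisely the Fubini–Study type total mass $\dim\mathcal{H}/n$. Two strategies look promising. The first is a reduction to the $n=1$ case, where the desired identity is the classical Berezin overcompleteness on $\mathbb{P}\mathcal{H}$ with its Fubini–Study measure; pulling back along the Stiefel fibration $V_n(\mathcal{H})\to\mathbf{G}_n\mathcal{H}$ and writing $q=\sum_{i=1}^n |e_i\rangle\langle e_i|$, exchangeability of the frame vectors converts the integral of $q$ over $\mathbf{G}_n\mathcal{H}$ into $n$ copies of the $n=1$ integral $\int_{\mathbb{P}\mathcal{H}} |e\rangle\langle e|$. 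The second is a direct local computation: by the previous proposition the zero section is K\"ahler for $\textup{Re}\,\Omega$, and in a unitary chart around a basepoint $q_0$ the K\"ahler form is the pullback of the Hilbert–Schmidt pairing on $B\mathcal{H}$, whose Liouville volume on $\mathbf{G}_n\mathcal{H}$ is standard. Either route reduces the calibration to a one–line $U(\mathcal{H})$–invariant calculation, and combined with Steps 1--2 completes the proof.
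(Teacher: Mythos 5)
Your Steps 1--2 are exactly the paper's argument (\cref{iden}): unitary invariance of $\Omega$ (\cref{inv}) plus Schur's lemma gives that the integral is a scalar multiple of $\mathds{1}_{\mathcal{H}}$, and the trace computation identifies the scalar as $\tfrac{n}{\dim\mathcal{H}}\operatorname{Vol}_{\Omega}(\mathbf{G}_n\mathcal{H})$. The divergence is in what you do next. The paper stops there: it only claims the integral is a \emph{positive multiple} of the identity, and the exact normalization is absorbed into the measure (see \cref{corid}, where the integration is taken against $\tfrac{\dim\mathcal{H}}{n\operatorname{Vol}_{\Omega}(\mathbf{G}_n\mathcal{H})}\Omega^{\textup{top}}$; likewise the definition of a holomorphic quantization explicitly reads ``up to rescaling $\Omega^{\textup{top}}$ by a constant''). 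So the ``main obstacle'' you identify --- proving the calibration $\int_{\mathbf{G}_n\mathcal{H}}\Omega^{\textup{top}}=\dim\mathcal{H}/n$ on the nose --- is not actually needed for the statement as the paper intends it, and it is the one part of your write-up that is not carried out: both of your sketched strategies (Stiefel-fibration exchangeability, local K\"ahler volume) are left at the level of a plan, and the claim that either is a ``one-line calculation'' is optimistic. Indeed, with $\Omega_q(A,B)=i\operatorname{Tr}(q[A,B])$ fixed as in the paper, the Liouville volume of $\mathbf{G}_n\mathcal{H}$ carries factors of $\pi$ and factorials (already for $n=1$, $\mathbb{P}^{d-1}$ has Fubini--Study volume $\pi^{d-1}/(d-1)!$), so the unrescaled equality $\int\Omega^{\textup{top}}=\dim\mathcal{H}/n$ will generically fail. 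The fix is simply to state the conclusion the way the paper does --- the integral equals $\tfrac{n}{\dim\mathcal{H}}\operatorname{Vol}_{\Omega}(\mathbf{G}_n\mathcal{H})\,\mathds{1}_{\mathcal{H}}$, whence the proposition holds after normalizing $\Omega^{\textup{top}}$ --- rather than to chase an exact volume identity.
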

This final proposition is concerned with the corresponding path integral quantization:
\begin{proposition}
The following is a rank–n $I$–holomorphic vector bundle (the complexified tautological bundle) 
\begin{equation}
\mathcal{E}:=\{(q,v)\in \textup{T}^*\mathbf{G}_n\mathcal{H}\times\mathcal{H}: qv=v\}\to \textup{T}^*\mathbf{G}_n\mathcal{H}\;.
\end{equation}
Furthermore, there is an $I$–holomorphic section $P$ of
\begin{equation}
\mathcal{E}^*\boxtimes\mathcal{E}\to \textup{T}^*\mathbf{G}_n\mathcal{H}\times\textup{T}^*\mathbf{G}_n\mathcal{H}\;\footnote{$\mathcal{E}^*\boxtimes\mathcal{E}:=\pi_0^*\mathcal{E}^*\otimes\pi_1^*\mathcal{E},$ where $\pi_0,\pi_1$ are the projection maps $\textup{T}^*\mathbf{G}_n\mathcal{H}\times\textup{T}^*\mathbf{G}_n\mathcal{H}\to \textup{T}^*\mathbf{G}_n\mathcal{H}.$}
\end{equation}
that is polarized with respect to $(-J,J)$ and has the following properties: 
\begin{enumerate}\label{proppp}
    \item $P$ is the identity map on the diagonal.
    \item $P$ determines a holomorphic connection whose curvature has trace equal to $\Omega.$
    \item $P$ is an idempotent of the convolution algebra of $\mathcal{E}\to \textup{T}^*\mathbf{G}_n\mathcal{H}$, where the integration cycle is the zero section.
    \item $P$ determines a Hilbert subspace of sections of $\mathcal{E}$ that is unitarily equivalent to $\mathcal{H}.$
\end{enumerate}
\end{proposition}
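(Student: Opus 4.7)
The plan is to define $P$ by the natural formula
\[P(q_0,q_1)(v):=q_1 v\quad\text{for }v\in\mathcal{E}_{q_0}=\text{im}(q_0)\subset\mathcal{H},\]
extended linearly. Since the embedding \cref{cotem} identifies $T^*\mathbf{G}_n\mathcal{H}$ with a subvariety of $B\mathcal{H}$ in an $I$-holomorphic way, and $\mathcal{E}$ sits inside the trivial bundle with fiber $\mathcal{H},$ this formula is manifestly an $I$-holomorphic section of $\mathcal{E}^*\boxtimes\mathcal{E},$ and property (1) is immediate from $qv=v$ for $v\in\text{im}(q).$

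Property (3) reduces directly to the overcompleteness identity of the preceding proposition. Since the zero section of $T^*\mathbf{G}_n\mathcal{H}$ is $\mathbf{G}_n\mathcal{H},$
\[(P*P)(q_0,q_2)(v)=\int_{q\in\mathbf{G}_n\mathcal{H}}q_2 q v\,\Omega^{\text{top}}(q)=q_2\Bigl(\int_{\mathbf{G}_n\mathcal{H}}q\,\Omega^{\text{top}}\Bigr)v=q_2 v=P(q_0,q_2)(v).\]
For property (4), the map $\Phi\colon\mathcal{H}\to\Gamma(\mathcal{E}),\ v\mapsto(q\mapsto qv),$ lands in the subspace of sections fixed by right-convolution with $P,$ and the same identity yields $\langle\Phi(v),\Phi(w)\rangle_{\Gamma(\mathcal{E})}=\langle v,w\rangle_\mathcal{H},$ realizing the desired unitary equivalence.

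For (2), the connection on $\mathcal{E}$ extracted by differentiating $P(q,q_1)$ in the second variable at $q_1=q$ is $\nabla=q\circ d,$ i.e.\ the restriction of the trivial flat connection on $T^*\mathbf{G}_n\mathcal{H}\times\mathcal{H}$ projected back to $\mathcal{E}.$ Using the identity $q\,dq\,q=0$ (a direct consequence of $q^2=q$) one finds its curvature is $q\,dq\wedge dq,$ and its fiberwise trace $\mathrm{Tr}(q\,dq\wedge dq)$ should match $\Omega$ via the standard identification of this expression with the Fubini-Study-type form on the Grassmanian, extended across the cotangent directions by the canonical symplectic form.

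The main obstacle is verifying the polarization condition with respect to $(-J,J).$ Since $I$ and $J$ are integrable commuting almost complex structures (by the previous proposition on the geometry of $T^*\mathbf{G}_n\mathcal{H}$), this amounts to checking that $P$ is $J$-holomorphic in the second variable and $(-J)$-holomorphic in the first, on top of being $I$-holomorphic in both. I would make $J$ explicit using the hyperkähler data: $J$ acts antilinearly relative to $I$ along cotangent fibers and is related to Hermitian conjugation on the base. Once $J$ is written in closed form, the verification reduces to a pointwise linear-algebraic computation on the defining relation $q^2=q$ of rank-$n$ projections, showing that $P(q_0,q_1)=q_1|_{\mathrm{im}(q_0)}$ satisfies the Cauchy-Riemann equations for $J$ in the second factor and for $-J$ in the first.
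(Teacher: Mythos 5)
Your definition of $P$, namely $v\,P(q_0,q_1)=q_1v$ for $v\in q_0(\mathcal{H})$, is exactly the paper's (\cref{section}), and your treatment of properties 1, 3 and 4 follows the paper's route: idempotency reduces to $P(q_0,q)P(q,q_2)=q_2q$ together with the Schur--lemma identity $\int_{\mathbf{G}_n\mathcal{H}}q\,\Omega^{\textup{top}}=c\,\mathds{1}_{\mathcal{H}}$ (\cref{iden}) — note the paper must normalize the measure by $\textup{dim}\,\mathcal{H}/(n\textup{Vol}_{\Omega}(\mathbf{G}_n\mathcal{H}))$, which you elide — and unitarity of $\Psi\mapsto(q\mapsto q\Psi)$ uses the same identity plus Hermiticity of $q$ on the zero section. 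You should still exhibit the inverse $\psi\mapsto\int_{\mathbf{G}\mathcal{H}}\psi\,\Omega^{\textup{top}}$ to see that your isometry is onto $\mathcal{H}_P$, as the paper does. For property 2 your connection $\nabla=q\circ d$ agrees with the paper's splitting $A\mapsto(A,Av)$, and the computation you defer with ``should match'' is short and worth finishing: $\textup{Tr}(q\,dq\wedge dq)(A,B)=\textup{Tr}(q[A,B])$, which is $\Omega_q(A,B)=i\textup{Tr}(q[A,B])$ up to the conventional factor of $i$; the paper organizes the same computation as the van Est map applied to $\log\Delta$ in \cref{propp}.

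The genuine gap is the polarization claim, which you correctly identify as the remaining obstacle but then plan to attack with the wrong $J$. The $J$ in the statement is the commuting (tessarine) structure $J_q(A)=i[A,q]$, which is $I$\emph{-linear} since $IJ=JI$; it is not the hyperk\"{a}hler structure, which anticommutes with $I$, involves $A^*$ (see \cref{anti}), and is the one that ``acts antilinearly relative to $I$'' and ``is related to Hermitian conjugation.'' Your description of $J$ is a description of that other, anticommuting structure, and Cauchy--Riemann equations written for it would not establish the stated $(-J,J)$-polarization — indeed an $I$-holomorphic $P$ cannot in general also be polarized for a structure anticommuting with $I$, so the computation as you have set it up would fail. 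With the correct formula $J_q(A)=i[A,q]$ the verification is the pointwise linear-algebra check the paper alludes to, using $qAq=0$ for $A\in\textup{T}_q\textup{T}^*\mathbf{G}\mathcal{H}$ and the covariant derivative of $P$ computed from $\nabla=q\circ d$; this step needs to be carried out, not just announced.
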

As a result of \cref{proppp} (most importantly, property 3), for $x,y\in \mathbf{G}_n\mathcal{H}$ we (formally) have that
\begin{equation}\label{path}
P(x,y)=\int_{\gamma(0)=x}^{\gamma(1)=y}\mathcal{P}(\gamma)\,\mathcal{D}\gamma\;,
\end{equation}
where the integral is over all paths $\gamma$ in $\mathbf{G}_n\mathcal{H}$ beginning and ending at $x,y,$ and $\mathcal{P}(\gamma)$ denotes parallel transport over $\gamma.$ Since $P$ is holomorphic, it analytically continues this path integral to $\textup{T}^*\mathbf{G}_n\mathcal{H}.$ This path integral is a vector bundle generalization of a version of Feynman's phase space path integral for which $\Delta t\to 0$ (or equivalently, the Hamiltonian is set equal to $0$).\ The latter is commonly known as the coherent state path integral, \cite{klauder}, \cite{klauder2}.
\begin{remark}
It is unusual for vector bundles to be used in quantization rather than just line bundles. Their appearance is due to the consideration of higher rank projections, whereas rank–one projections correspond to line bundles — physically, rank–one projections correspond to wave functions/pure states. Higher rank projections correspond to density operators that are maximally mixed given that their image lie in a specific subspace.
\end{remark} 
\section{Complex and Symplectic Geometry of  $\textup{T}^*\mathbf{G}\mathcal{H}$}
First, we will give a simple geometric construction of the commuting almost complex structures and the compatible holomorphic symplectic form on $\textup{T}^*\mathbf{G}_n\mathcal{H}.$ We will then construct the canonical holomorphic prequantum vector bundle and idempotent section. To do this, we will make use of the identification of $\textup{T}^*\mathbf{G}_n(\mathcal{H})$ with projections on $\mathcal{H}.$
\subsection{Preliminaries: The embedding $\textup{T}^*\mathbf{G}\mathcal{H}\xhookrightarrow{}B\mathcal{H}$}
We assume the standard identification $\textup{T}_V\mathbf{G}_n(\mathcal{H})\cong \textup{Hom}(V,V^\perp)$ (see \cref{vperp}), so that $\textup{T}^*_V\mathbf{G}_n(\mathcal{H})\cong \textup{Hom}(V^\perp,V)$ via the trace map, ie.
\begin{lemma}
$\textup{Hom}(V^\perp,V)\times\textup{Hom}(V,V^\perp)\mapsto\mathbb{C}\,,\,(f,g)\mapsto \textup{Tr}(fg)$ is a perfect pairing.
\end{lemma}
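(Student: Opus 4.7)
My plan is to reduce the claim to the standard non-degeneracy of the Hilbert–Schmidt trace pairing on rectangular matrices. First I would record that both sides are finite-dimensional complex vector spaces of equal dimension $n(\dim\mathcal{H}-n)$, by choosing orthonormal bases of $V$ and $V^\perp$ and identifying elements of $\textup{Hom}(V^\perp,V)$ with $n\times(\dim\mathcal{H}-n)$ matrices, and elements of $\textup{Hom}(V,V^\perp)$ with $(\dim\mathcal{H}-n)\times n$ matrices. Since the pairing $(f,g)\mapsto\textup{Tr}(fg)$ is $\mathbb{C}$-bilinear between two spaces of the same finite dimension, perfectness is equivalent to non-degeneracy on a single side.

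Next I would exhibit non-degeneracy directly using the Hermitian structure on $\mathcal{H}$. Given a nonzero $f\in\textup{Hom}(V^\perp,V)$, take $g=f^{\ast}\in\textup{Hom}(V,V^\perp)$ to be its Hermitian adjoint. Then $fg=ff^{\ast}$ is a positive semidefinite endomorphism of $V$, and
\begin{equation}
\textup{Tr}(ff^{\ast})=\|f\|^2_{\textup{HS}}>0,
\end{equation}
since $f\neq 0$. This shows the left kernel of the pairing is zero; swapping the roles of $f$ and $g$ (or just quoting the dimension count) yields perfectness.

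There is no real obstacle here: the argument only uses that $\mathcal{H}$ is a finite-dimensional inner product space, and it amounts to the standard fact that $(A,B)\mapsto\textup{Tr}(AB)$ is a perfect pairing between $\textup{Mat}_{m\times k}(\mathbb{C})$ and $\textup{Mat}_{k\times m}(\mathbb{C})$. The only mild subtlety worth flagging is that $g=f^{\ast}$ is used only as a witness for non-degeneracy of the bilinear trace pairing — the pairing itself is $\mathbb{C}$-bilinear, not sesquilinear, so the conclusion is purely algebraic and independent of the inner product used to define the adjoint.
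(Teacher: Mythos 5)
Your proposal is correct and uses exactly the paper's argument: the paper's entire proof is the observation $\textup{Tr}(ff^{*})>0$, which is your key non-degeneracy witness. The dimension count and the remark about bilinearity versus sesquilinearity are just the standard details the paper leaves implicit.
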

\begin{proof}
$\textup{Tr}(ff^*)>0.$
\end{proof}
\begin{proposition}\label{id}
$\textup{T}^*\mathbf{G}_n(\mathcal{H})\cong \{q\in B\mathcal{H}: q^2=q,\,\textup{Tr}(q)=n\}.$
\end{proposition}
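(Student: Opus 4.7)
The plan is to construct an explicit bijection. Given an idempotent $q \in B\mathcal{H}$ with $\textup{Tr}(q) = n$, I would assign the pair $(V,\phi)$, where $V := \textup{Im}(q) \in \mathbf{G}_n\mathcal{H}$ and $\phi \in \textup{Hom}(V^\perp,V)$ encodes the extent to which $\ker(q)$ deviates from $V^\perp$. Composing with the preceding lemma's identification $\textup{Hom}(V^\perp,V) \cong \textup{T}^*_V\mathbf{G}_n\mathcal{H}$ then gives the claimed bijection with $\textup{T}^*\mathbf{G}_n\mathcal{H}$.

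First I would observe that any bounded idempotent on $\mathcal{H}$ of finite trace is trace class and of finite rank equal to its trace, so $V\in\mathbf{G}_n\mathcal{H}$. Orthogonally decomposing $\mathcal{H}=V\oplus V^\perp$, idempotency together with $\textup{Im}(q)=V$ forces $q|_V=\textup{id}_V$, yielding the block form
\begin{equation*}
q \;=\; \begin{pmatrix} I_V & \phi \\ 0 & 0 \end{pmatrix}, \qquad \phi := q|_{V^\perp}\colon V^\perp \to V.
\end{equation*}
This shows the map $q\mapsto(V,\phi)$ is well-defined.

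Conversely, given any pair $(V,\phi)\in\textup{T}^*\mathbf{G}_n\mathcal{H}$, I would define $q$ by the block formula above. A direct computation gives $q^2=q$ (the lower-right block being zero means the choice of $\phi$ is unconstrained), and $\textup{Tr}(q)=\textup{Tr}(I_V)=n$ since the off-diagonal block $\phi$ contributes nothing to the trace. This provides the inverse assignment, and one reads off $\ker(q) = \{w - \phi(w) : w\in V^\perp\}$, confirming that $\phi$ really does parametrize the complements of $V$ in $\mathcal{H}$.

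The only mild obstacle is verifying that the bijection is globally well-defined over the Grassmanian and not merely fiberwise, but this is automatic: the operations used — orthogonal projection onto $V$ versus $V^\perp$, and the reconstruction of $q$ from $\phi$ — are algebraic and depend smoothly (indeed holomorphically) on $V$. As a sanity check, the zero section $\phi\equiv 0$ recovers the usual embedding $\mathbf{G}_n\mathcal{H}\hookrightarrow B\mathcal{H}$ as Hermitian rank-$n$ projections, consistent with the informal description in the introduction that orthogonal projections correspond to the ``real'' locus inside $\textup{T}^*\mathbf{G}_n\mathcal{H}$.
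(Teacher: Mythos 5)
Your proof is correct and is essentially the same as the paper's: both identify a projection $q$ onto $V$ with the pair $(V, q|_{V^\perp})$ and invert by extending $\phi$ by zero and adding the orthogonal projection $q_V$, which is exactly your block formula $\begin{pmatrix} I_V & \phi \\ 0 & 0\end{pmatrix}$. Your write-up just adds some worthwhile detail (the trace/rank argument and the explicit description of $\ker(q)$) that the paper leaves implicit.
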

\begin{proof}
We will show that 
\begin{equation}
   \textup{Hom}(V^\perp,V)\cong\{\textup{Projections onto }V\}\;. 
\end{equation}
Let $q$ be any projection onto $V.$ We define $f\in \textup{Hom}(V^\perp,V)$ by
\begin{equation}\label{grass}
f=q\vert_{V^\perp}\;.
\end{equation}
Conversely, let $f\in \textup{Hom}(V^\perp,V)$ and let $q_V$ be the orthogonal projection onto $V.$ We assume that $f$ is defined on all of $\mathcal{H}$ by extending it by $0$ on $V.$ We get a projection onto $V$ by defining
\begin{equation}
    q=q_V+f\;.
\end{equation}
This is the inverse of \cref{grass}.
\end{proof}
With the identification given above, the zero section corresponds to the natural inclusion of orthogonal projections into projections, and the natural map $\textup{T}^*\mathbf{G}\mathcal{H}\to \mathbf{G}\mathcal{H}$ is given by sending a projection to its corresponding orthogonal projection. From now on we will assume this identification — its main advantage is that we get a very simple description of $\textup{T}(\textup{T}^*\mathbf{G}\mathcal{H}):$
\begin{lemma}\label{lemcot}
Under the identification of \cref{id},
\begin{equation}\label{tan}
 \textup{T}(\textup{T}^*\mathbf{G}\mathcal{H})\cong \{(q,A)\in \textup{T}^*\mathbf{G}\mathcal{H}\times B\mathcal{H}: qA+Aq=A\}\;.
\end{equation}
\end{lemma}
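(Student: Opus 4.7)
The plan is to linearize the defining equations of $\textup{T}^*\mathbf{G}_n\mathcal{H}$ viewed inside $B\mathcal{H}$ via the embedding from \cref{id}. Since $\textup{T}^*\mathbf{G}_n\mathcal{H}$ sits in $B\mathcal{H}$ as the locus $\{q : q^2 = q,\,\textup{Tr}(q) = n\}$, any tangent vector at $q$ is represented by a smooth curve $q(t) = q + tA + O(t^2)$ of such idempotents. Differentiating $q(t)^2 = q(t)$ at $t = 0$ immediately yields the equation $qA + Aq = A$.

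First I would verify that the trace constraint contributes nothing extra. Multiplying $qA + Aq = A$ on the left by $q$ and using $q^2 = q$ gives $qAq = 0$; hence $\textup{Tr}(A) = \textup{Tr}(qA) + \textup{Tr}(Aq) = 2\,\textup{Tr}(qAq) = 0$ automatically. This establishes the inclusion $\textup{T}_q(\textup{T}^*\mathbf{G}\mathcal{H}) \subseteq \{A : qA + Aq = A\}$.

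For the reverse inclusion, I would produce an explicit integral curve of idempotents through $q$ with derivative $A$, by conjugation. Setting $B := Aq - qA$ and $q(t) := e^{tB} q e^{-tB}$, each $q(t)$ is an idempotent with $\textup{Tr}(q(t)) = \textup{Tr}(q) = n$, and a direct computation using $q^2 = q$ and $qAq = 0$ gives $\dot{q}(0) = [B,q] = (Aq - qA)q - q(Aq - qA) = Aq + qA - 2qAq = A$. Thus every $A$ in the linearized locus is realized as a genuine tangent vector, giving \cref{tan}.

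The main subtlety will be ensuring that $\textup{T}^*\mathbf{G}_n\mathcal{H}$ is an embedded submanifold of $B\mathcal{H}$ in the first place, so that the linearized equation truly computes the tangent space rather than merely a superspace. I would resolve this by observing that the rank-$n$ idempotents form a single orbit of the conjugation action of $GL(\mathcal{H})$ on $B\mathcal{H}$, so the surjection $B\mathcal{H} \to \textup{T}_q(\textup{T}^*\mathbf{G}\mathcal{H})$, $B \mapsto [B,q]$, from the infinitesimal orbit action already lands in $\{A : qA + Aq = A\}$ (as one checks from $q[B,q] + [B,q]q = [B,q]$). As a sanity check, decomposing $\mathcal{H} = \textup{im}(q) \oplus \ker(q)$ and writing $A$ as a block matrix in this splitting shows that $qA + Aq = A$ forces the diagonal blocks to vanish, so the linearized locus has complex dimension $2n(\dim\mathcal{H} - n) = \dim_{\mathbb{C}}\textup{T}^*\mathbf{G}_n\mathcal{H}$, confirming equality.
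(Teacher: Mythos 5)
Your proposal is correct and follows essentially the same route as the paper, which simply linearizes the idempotent equation by noting that $A$ is tangent iff $(q+\varepsilon A)^2=q+\varepsilon A+\mathcal{O}(\varepsilon^2)$ and defers the rest to ``standard arguments.'' Your added details --- deriving $qAq=0$ and hence $\textup{Tr}(A)=0$, realizing each linearized solution via the conjugation curve $e^{tB}qe^{-tB}$ with $B=Aq-qA$, and the block-matrix dimension count --- are exactly the standard arguments being invoked, and they check out.
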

\begin{proof}
This follows from standard arguments by observing that $A\in B\mathcal{H}$ is tangent to the space of projections at $P$ if and only if  $(q+\varepsilon A)^2=q+\varepsilon A+\mathcal{O}(\varepsilon^2).$
\end{proof}
\begin{remark}\label{vperp}
The identification $\textup{T}_V\mathbf{G}_n(\mathcal{H})\cong \textup{Hom}(V,V^\perp)$ is given as follows: $A\in \textup{T}_q\mathbf{G}\mathcal{H}$ satisfies $Aq+qA=A,$ which implies that $qAq=0,$ ie. $A\vert_{q(\mathcal{H})}\in \textup{Hom}(q(\mathcal{H}),q(\mathcal{H})^\perp).$ The pairing $\textup{T}_V^*\mathbf{G}\mathcal{H}\times \textup{T}_V\mathbf{G}\mathcal{H}\to\mathbb{R}$ is given by $(q,A)\mapsto \textup{Tr}(qA).$
\end{remark}
\subsection{The Complex Structures and Symplectic Form}
\subsubsection{Complex Structures}
We will first describe a pair of commuting almost complex structures and then we will describe the holomorphic symplectic form. The proofs that $I,J$ are integrable will be done in \cref{cg}.
\begin{definition}
Let $\mathcal{H}$ be a complex Hilbert space. We define (integrable, $\textup{GL}(\mathcal{H})$–equivariant) \textbf{commuting} almost complex structures $I, J$\footnote{The algebra of commuting complex structures (rather than anticommuting) is called the tessarines (rather than the quarternions). Instead of $(IJ)^2=-1$ we have $(IJ)^2=1.$} on $\textup{T}^*\mathbf{G}\mathcal{H}$ by 
\begin{equation}
    (q,A)\xmapsto{I} (q,iA)\;,\;\; (q,A)\xmapsto{J} (q,i[A,q])\;.
    \end{equation}
\end{definition}
We will prove that $J$ is an almost complex structure:
\begin{proof} To see that $J^2=-1,$ we observe that $Aq+qA=A$ and $q^2=q$ imply that $qAq=0.$ From this we get
\begin{align}
& \nonumber J^2A=i[i[A,q],q]=-[Aq-qA,q]=-(Aq-qA)q-q(Aq-qA)
\\&=-Aq-qA=-(Aq+qA)=-A\;.
\end{align}
To see that $J$ preserves the tangent spaces, we have
\begin{equation}\label{map}
   (JA)q+q(JA)= i[A,q]q+qi[A,q]=iAq-iqA=JA\;.
\end{equation}
\end{proof}
This previous definition is the only one in this paper that requires that $\mathcal{H}$ is a complex Hilbert space. In fact, a large part of this paper doesn't even explicitly use the inner product. 
\begin{definition}
We define an involution of the tangent bundle of $\textup{T}^*\mathbf{G}\mathcal{H}$ by $K:=IJ.$
\end{definition}
Note that $K$ exists in the case that $\mathcal{H}$ is a real Hilbert space as well.
\begin{remark}\label{jq}
For any $q\in \textup{T}^*\mathbf{G}\mathcal{H},$  $I_q,J_q,K_q:B\mathcal{H}\to \textup{T}_q^*\mathbf{G}\mathcal{H}$ are surjections, with the same formulas given as above.
\end{remark}
\begin{proposition}
The space of \textit{orthogonal} projections is a totally real submanifold with respect to $I,$ ie. for $q\in \mathbf{G}\mathcal{H},$
\begin{equation}
\textup{T}_q\textup{T}^*\mathbf{G}\mathcal{H}=\textup{T}_q\mathbf{G}\mathcal{H}\oplus I(\textup{T}_q\mathbf{G}\mathcal{H})\;.\footnote{In particular, it is the fixed point set of the adjoint map, which is an antiholomorphic involution.}
\end{equation} In addition, it is a complex submanifold with respect to $J.$ 
    \end{proposition}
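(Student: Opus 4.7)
The plan is to first identify the tangent space $\textup{T}_q\mathbf{G}\mathcal{H}$ inside $\textup{T}_q\textup{T}^*\mathbf{G}\mathcal{H}$ when $q\in\mathbf{G}\mathcal{H}$ (so $q=q^*$), and then verify the two decomposition/invariance claims by a short linear-algebra argument relying only on the Cartesian decomposition $A=\tfrac{A+A^*}{2}+i\,\tfrac{A-A^*}{2i}$ and the fact that the tangency condition $qA+Aq=A$ is preserved under $A\mapsto A^*$ whenever $q^*=q$.

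More precisely, I would first note that differentiating the equation $q=q^*$ at an orthogonal projection shows
\[
\textup{T}_q\mathbf{G}\mathcal{H}=\{A\in\textup{T}_q\textup{T}^*\mathbf{G}\mathcal{H}:A^*=A\}\;,
\]
where we use the description of $\textup{T}(\textup{T}^*\mathbf{G}\mathcal{H})$ from \cref{lemcot}. Next, taking the adjoint of the identity $qA+Aq=A$ and using $q^*=q$ yields $qA^*+A^*q=A^*$, so $A^*$ is again tangent to $\textup{T}^*\mathbf{G}\mathcal{H}$ at $q$. Hence for any $A\in\textup{T}_q\textup{T}^*\mathbf{G}\mathcal{H}$ the pieces $A_+:=\tfrac{A+A^*}{2}$ and $A_-:=\tfrac{A-A^*}{2i}$ both lie in $\textup{T}_q\mathbf{G}\mathcal{H}$, and $A=A_++iA_-=A_++I(A_-)$, giving $\textup{T}_q\textup{T}^*\mathbf{G}\mathcal{H}=\textup{T}_q\mathbf{G}\mathcal{H}+I(\textup{T}_q\mathbf{G}\mathcal{H})$. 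The intersection is trivial because an element in it would be simultaneously self-adjoint and anti-self-adjoint, which establishes total reality with respect to $I$. The footnoted remark that $\mathbf{G}\mathcal{H}$ is the fixed-point set of the involution $A\mapsto A^*$, and that this involution anticommutes with $I$, falls out of the same computation.

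For $J$-invariance, I would verify directly that $JA=i[A,q]$ is self-adjoint whenever $A$ is self-adjoint and $q=q^*$:
\[
(i[A,q])^*=-i[q^*,A^*]=-i[q,A]=i[A,q].
\]
Together with the already-established fact (equation \cref{map}) that $J$ preserves $\textup{T}_q\textup{T}^*\mathbf{G}\mathcal{H}$ at every $q$, this shows $J(\textup{T}_q\mathbf{G}\mathcal{H})\subset\textup{T}_q\mathbf{G}\mathcal{H}$, so $\mathbf{G}\mathcal{H}$ is a complex submanifold with respect to $J$.

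There is no real obstacle here — the argument is essentially bookkeeping once one makes the correct identification of $\textup{T}_q\mathbf{G}\mathcal{H}$ with the self-adjoint part of $\textup{T}_q\textup{T}^*\mathbf{G}\mathcal{H}$. The only mild subtlety is remembering to use $q^*=q$ when taking the adjoint of the tangency relation; without that hypothesis the decomposition would fail, which is consistent with $I$-total-reality being special to the zero section rather than holding at general points of $\textup{T}^*\mathbf{G}\mathcal{H}$.
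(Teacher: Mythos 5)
Your proposal is correct and follows essentially the same route as the paper: identify $\textup{T}_q\mathbf{G}\mathcal{H}$ with the Hermitian part of $\textup{T}_q\textup{T}^*\mathbf{G}\mathcal{H}$, decompose a general tangent vector into Hermitian and skew-Hermitian pieces, and observe that $I$ swaps these pieces while $J$ preserves them. The paper's proof is just a terser statement of exactly this argument, so your version simply fills in the bookkeeping (the closure of the tangency relation $qA+Aq=A$ under adjoints when $q^*=q$, and the self-adjointness of $i[A,q]$) that the paper leaves implicit.
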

\begin{proof} 
If $q\in \mathbf{G}\mathcal{H}$ then $A\in \textup{T}_q\textup{T}^*\mathbf{G}\mathcal{H}$ is in $\textup{T}_q\mathbf{G}\mathcal{H}$ if and only if $A=A^*.$ The statement now follows from the fact that $\textup{T}_q\textup{T}^*\mathbf{G}\mathcal{H}$ decomposes as a direct sum of Hermitian and skew–Hermitian tangent vectors, since $I_q$ permutes Hermitian and skew-Hermitian operators while $J_q$ fixes them.
\end{proof}
Of course, the zero section (ie. orthogonal projections) is the fixed point set of the anti–holomorphic involution $^*.$ 
\\\\As a consequence of the fact that $I, J, K$ commute, we have the following:
\begin{corollary}
$J,K $ preserve the $I$–holomorphic tangent bundle of $\textup{T}^*\mathbf{G}\mathcal{H}.$ Similarly, $I,K$ preserve the $J$–holomorphic tangent bundle of $\textup{T}^*\mathbf{G}\mathcal{H},$ and $I,J$ preserve the $\pm 1$ eigenbundle of $K.$
\end{corollary}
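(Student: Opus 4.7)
The plan is to reduce the corollary to a short commutation check plus the standard linear-algebra observation that commuting endomorphisms preserve each other's eigenspaces. First I would compute the pairwise commutators of $I$, $J$, and $K$: the hypothesis $IJ=JI$ together with $K:=IJ$ yields $IK=KI=I^{2}J=-J$ and $JK=KJ=IJ^{2}=-I$, so all three endomorphisms pairwise commute. In addition $K^{2}=IJIJ=I^{2}J^{2}=1$, so $K$ is a genuine involution of the tangent bundle and it makes sense to speak of its $\pm 1$ eigenbundle.

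The corollary then follows from the standard fact that if $A,B$ are commuting $\mathbb{R}$-linear endomorphisms of a real vector space $V$, then the $\mathbb{C}$-linear extension of $A$ to $V\otimes_{\mathbb{R}}\mathbb{C}$ preserves each eigenspace of the $\mathbb{C}$-linear extension of $B$ (and likewise preserves each real eigenspace when $B$ has real eigenvalues on $V$ itself). Concretely, the $I$-holomorphic tangent bundle is the $+i$-eigenspace of the $\mathbb{C}$-linear extension of $I$ to $\textup{T}(\textup{T}^{*}\mathbf{G}\mathcal{H})\otimes_{\mathbb{R}}\mathbb{C}$; since $J,K$ commute with $I$, their $\mathbb{C}$-linear extensions preserve it. The arguments for the $J$-holomorphic tangent bundle and for the $\pm 1$ eigenbundle of $K$ are identical after permuting roles, with the last case slightly simpler since the eigenbundles of $K$ already live in the real tangent bundle.

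There is essentially no obstacle; the only mild point is to confirm that $J$ and $K$ are honest endomorphisms of $\textup{T}(\textup{T}^{*}\mathbf{G}\mathcal{H})$ in the sense of \cref{lemcot}. For $J$ this was verified in \cref{map} just above, and for $K=IJ$ it follows immediately by composition, since $I$ clearly preserves the defining relation $qA+Aq=A$ (multiplying by $i$) and $J$ was just shown to preserve it. The whole argument therefore reduces to the two-line algebraic verification above.
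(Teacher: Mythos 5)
Your proposal is correct and follows the same route as the paper, which states the corollary without further proof as ``a consequence of the fact that $I,J,K$ commute''; your reduction to the standard fact that commuting endomorphisms preserve each other's (complexified) eigenbundles is exactly the intended argument. The extra verifications you include --- the commutator identities $IK=KI=-J$, $JK=KJ=-I$, $K^2=1$, and the check that $J$ and $K$ genuinely preserve the tangent spaces of \cref{lemcot} --- are all consistent with what the paper establishes beforehand.
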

\subsubsection{Symplectic Form}
We now define the canonical holomorphic symplectic form. The proof that it is non–degenerate will be done in \cref{cg}.
\\\\\textbf{Convention:} For the rest of this paper, in the context of $\mathbf{G}_n\mathcal{H}$  the trace will be normalized so that rank–n projections have trace equal to $1.$ 
\begin{definition}\label{sym}
We have an $I$–holomorphic symplectic form $\Omega$ on $\textup{T}^*\mathbf{G}\mathcal{H}$ given by
\begin{equation}
    \Omega_q(A,B)\xmapsto{\Omega}i\textup{Tr}(q[A,B])\;.\footnote{The trace of a simple finite dimensional $C^*$–algebras is (up to a constant) the unique linear functional for which $f(ab)=f(ba).$ Our normalization fixes it. Finite dimensional $C^*$–algebra are direct sums of simple ones.}
\end{equation}
\end{definition}
We discuss its compatibility with $J,K.$
\begin{lemma}\label{inv}
$\Omega(JX,JY)=\Omega(X,Y),\,\Omega(KX,KY)=-\Omega(X,Y),$ and $\Omega$ is invariant under the action of $\textup{GL}(\mathcal{H}),$ ie. $q\mapsto MqM^{-1}.$
\end{lemma}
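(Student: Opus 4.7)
The plan is to reduce each of the three invariances to a short algebraic manipulation built around two facts: cyclicity of the trace, and the identity $qAq=0$ for all tangent vectors $A$ at $q$ (which follows from $qA+Aq=A$ by multiplying either side by $q$ and using $q^2=q$).

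I will treat $\textup{GL}(\mathcal{H})$-invariance first, as it is immediate. The action $q\mapsto MqM^{-1}$ carries a tangent vector $A$ at $q$ to $MAM^{-1}$ at $MqM^{-1}$ (the tangency condition $(MqM^{-1})(MAM^{-1})+(MAM^{-1})(MqM^{-1})=MAM^{-1}$ holds by conjugating $qA+Aq=A$). Then
\begin{equation}
\Omega_{MqM^{-1}}(MAM^{-1},MBM^{-1})=i\textup{Tr}\bigl(MqM^{-1}\cdot M[A,B]M^{-1}\bigr)=i\textup{Tr}(q[A,B])=\Omega_q(A,B),
\end{equation}
by cyclicity.

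For the $J$- and $K$-compatibilities the central computation is to simplify $q\bigl[[A,q],[B,q]\bigr]$. Expanding $[A,q][B,q]=(Aq-qA)(Bq-qB)$ and multiplying on the left by $q$, every term with a $qAq$ or $qBq$ factor vanishes, leaving
\begin{equation}
q[A,q][B,q]= -qABq.
\end{equation}
Antisymmetrizing yields $q\bigl[[A,q],[B,q]\bigr]=-q[A,B]q$, so taking the trace and using $q^2=q$ plus cyclicity gives
\begin{equation}
\textup{Tr}\bigl(q[[A,q],[B,q]]\bigr)=-\textup{Tr}(q[A,B]).
\end{equation}
Now $JA=i[A,q]$, so $\Omega_q(JA,JB)=i\textup{Tr}(q[i[A,q],i[B,q]])=-i\textup{Tr}(q[[A,q],[B,q]])=i\textup{Tr}(q[A,B])=\Omega_q(A,B)$. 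Since $K=IJ$ gives $KA=-[A,q]$ (well-defined in the tangent space, as $q(qA-Aq)+(qA-Aq)q=qA-Aq$), we similarly get $\Omega_q(KA,KB)=i\textup{Tr}(q[[A,q],[B,q]])=-\Omega_q(A,B)$.

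There is no real obstacle here; the only thing to be careful about is correctly tracking signs of $i$ and keeping the tangency identity $qAq=0$ in view, which is what kills the would-be obstruction terms in the expansion of $[A,q][B,q]$. Everything else is cyclicity of the trace.
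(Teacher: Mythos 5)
Your proof is correct and is exactly the ``direct computation'' that the paper's proof invokes without writing out: the key identities are $qAq=0$ for tangent vectors, cyclicity of the trace, and the resulting relation $q[[A,q],[B,q]]=-q[A,B]q$, all of which you verify accurately (including the sign bookkeeping for $J$ and $K=IJ$ and the equivariance of the tangency condition under conjugation). Nothing further is needed.
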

\begin{proof}
The proof is a direct computation.
\end{proof}
\begin{proposition}
The foliations given by the $\pm 1$ eigenbundles of $K$ are Lagrangian polarizations.
\end{proposition}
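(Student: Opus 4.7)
The plan is to compute $K = IJ$ in closed form, identify its $\pm 1$-eigenspaces, and then verify isotropy, half-dimensionality, and integrability, handling the last of these by exhibiting explicit integral submanifolds rather than through a Nijenhuis calculation.

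Direct substitution gives $K(A) = I(J(A)) = i\cdot i[A,q] = [q,A] = qA - Aq$. Combining $K(A) = \pm A$ with the tangency relation $qA + Aq = A$ of \cref{lemcot} (which also forces $qAq = 0$) yields
\begin{equation*}
E_+ = \{A : qA = A,\ Aq = 0\}, \qquad E_- = \{A : qA = 0,\ Aq = A\}.
\end{equation*}
Both are cut out by $\mathbb{C}$-linear equations, hence $I$-invariant, and a block-matrix count at an orthogonal $q_0$ (with blocks of sizes $n$ and $\dim\mathcal{H} - n$) shows each has complex dimension $n(\dim\mathcal{H}-n)$, exactly half of $\dim_{\mathbb{C}}\textup{T}^*\mathbf{G}\mathcal{H}$.

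The isotropy of $\Omega$ on $E_\pm$ is then immediate. For $A,B \in E_+$ one has $qAB = (qA)B = AB$ and $qBA = BA$, so $q[A,B] = [A,B]$ and $\Omega(A,B) = i\,\textup{Tr}[A,B] = 0$; for $A,B \in E_-$ the products $qAB$ and $qBA$ vanish outright. Together with the dimension count this shows $E_\pm$ are Lagrangian at every point. For integrability I would identify the leaves geometrically. The distribution $E_+$ agrees with the vertical distribution of $\pi:\textup{T}^*\mathbf{G}\mathcal{H}\to\mathbf{G}\mathcal{H}$, whose leaves are the affine fibers $\{q:\textup{range}(q) = V\}$, consistent with the earlier proposition asserting these fibers are Lagrangian. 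By symmetry, the leaves of $E_-$ should be $\{q : \ker q = W\}$ for a fixed codimension-$n$ subspace $W$: differentiating $qw = 0$ for $w\in W$ along such a path shows that $\dot q$ vanishes on $\ker q$, i.e., $\dot q q = \dot q$, and then differentiating $q^2 = q$ forces $q\dot q = 0$, placing $\dot q$ in $E_-$. Since projections with fixed kernel $W$ form the affine space of graphs of linear maps from any chosen complement of $W$ into $W$, of complex dimension $n(\dim\mathcal{H}-n)$, this matches $E_-$ dimensionally and gives the full leaf.

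The main obstacle is purely conceptual: recognizing that the $\pm 1$-eigenspaces of $K$ correspond to the two natural affine foliations of the space of projections — \emph{fixed range} and \emph{fixed kernel} — sidesteps any direct integrability computation. Once these leaves are identified, the remaining isotropy and dimension checks are elementary.
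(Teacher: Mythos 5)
Your proof is correct, but it takes a different route from the paper's for the isotropy step and supplies details the paper leaves implicit. The paper's entire proof is a one-line symmetry argument: by its Lemma 1.2.8 (\cref{inv}), $\Omega(KX,KY)=-\Omega(X,Y)$, so for $X,Y$ in a common eigenbundle of $K$ one gets $\Omega(X,Y)=-\Omega(X,Y)=0$; half-dimensionality is taken for granted and integrability of the $+1$ eigenbundle is only established in the \emph{subsequent} lemma (identifying it with the vertical foliation of $\textup{T}^*\mathbf{G}\mathcal{H}\to\mathbf{G}\mathcal{H}$), with the $-1$ case handled implicitly via the adjoint symmetry mentioned in the following remark. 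You instead compute $K(A)=[q,A]$ explicitly, solve for the eigenspaces $E_+=\{qA=A,\,Aq=0\}$ and $E_-=\{qA=0,\,Aq=A\}$, and verify isotropy directly from $\textup{Tr}(q[A,B])=\textup{Tr}([A,B])=0$ (resp.\ $=0$ outright); this is more computational but self-contained and does not depend on first proving the $K$-invariance identity. Your identification of the two foliations as \emph{fixed range} versus \emph{fixed kernel}, together with the affine-space description of the leaves and the matching dimension count, is a genuine addition: it proves integrability and half-dimensionality concretely, whereas the paper only records the fixed-range half and gestures at the other. The trade-off is that the paper's symmetry argument generalizes verbatim to any $(I,J)$ compatible with $\Omega$ in the sense of its Definition 3.0.2 (\cref{com}), while your argument exploits the specific projection model of $\textup{T}^*\mathbf{G}\mathcal{H}$.
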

\begin{proof}
This follows from the previous lemma, since if $X,Y$ are in these eigenbundles then $\Omega(X,Y)=-\Omega(X,Y).$
\end{proof}
\begin{lemma}
The foliation determined by the $+1$ eigenbundle of $K$ is the foliation given by the projection $\textup{T}^*\mathbf{G}\mathcal{H}\to\mathbf{G}\mathcal{H}.$
\end{lemma}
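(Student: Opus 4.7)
The plan is to explicitly compute $K=IJ$ on tangent vectors, characterize its $+1$ eigenbundle algebraically, and compare this with the vertical distribution of $\pi:\textup{T}^*\mathbf{G}\mathcal{H}\to\mathbf{G}\mathcal{H}$ computed using the parametrization from \cref{id}.

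First I would compute $K$ directly: for $(q,A)\in \textup{T}(\textup{T}^*\mathbf{G}\mathcal{H})$,
\begin{equation}
K(q,A)=I\bigl(q,i[A,q]\bigr)=\bigl(q,-[A,q]\bigr)=\bigl(q,[q,A]\bigr)\;.
\end{equation}
Thus the $+1$ eigenspace of $K_q$ consists of those $A\in B\mathcal{H}$ satisfying both the tangent condition $qA+Aq=A$ (from \cref{lemcot}) and $qA-Aq=A$. Adding and subtracting these two relations yields the clean characterization
\begin{equation}
\bigl(\textup{T}_q\textup{T}^*\mathbf{G}\mathcal{H}\bigr)^{K=+1}=\{A\in B\mathcal{H}:qA=A,\,Aq=0\}\;.
\end{equation}

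Next I would identify the vertical tangent space of $\pi$ at $q$ using the decomposition $q=q_V+f$ of \cref{id}, where $V=\pi(q)\in\mathbf{G}_n\mathcal{H}$ and $f\in\textup{Hom}(V^\perp,V)$ extended by $0$ on $V$. The fiber $\pi^{-1}(q_V)$ is the affine space $q_V+\textup{Hom}(V^\perp,V)$, so vertical tangent vectors at $q$ are precisely operators $A$ with $A(V)=0$ and $A(\mathcal{H})\subset V$. I would then check, using $q=q_V+f$ with $f|_V=0$ and $\textup{im}\,f\subset V$, that such an $A$ satisfies $qA=q_VA+fA=A+0=A$ and $Aq=Aq_V+Af=0+0=0$. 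Conversely, $qA=A$ forces $\textup{im}\,A\subset \textup{im}\,q=V$, and $Aq=0$ combined with $A|_V=A|_{q_V(\mathcal{H})}$ (plus $Aq_V=A(q-f)=A-0=A$, used in the other direction) gives $A|_V=0$; so these two conditions cut out exactly the vertical tangent space.

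Combining the two computations, the $+1$ eigenbundle of $K$ coincides pointwise with the vertical subbundle $\ker d\pi$. Since $\pi$ is a smooth surjective submersion with connected fibers (each fiber is an affine vector space $\textup{Hom}(V^\perp,V)$), its vertical distribution is automatically involutive and integrates to the fibers of $\pi$, which gives the desired identification of foliations.

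I do not anticipate a real obstacle: the argument reduces to the bookkeeping of the identities $qA=A$ and $Aq=0$. The only place where I would be careful is in handling the identification $q=q_V+f$ and the conventions that $f$ is extended by $0$ on $V$, since this is what makes the computations $qA=A$ and $Aq=0$ come out cleanly for vertical vectors at a non-orthogonal projection.
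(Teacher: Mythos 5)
Your proof is correct and follows essentially the same route as the paper: both compute $K_q(A)=[q,A]$, combine the eigenvalue equation with the tangency relation $qA+Aq=A$ to get $qA=A,\ Aq=0$, and match this against the vertical distribution of $\textup{T}^*\mathbf{G}\mathcal{H}\to\mathbf{G}\mathcal{H}$ (the paper linearizes the fiber condition $q'q=q$ to get $Aq=0$ directly, whereas you use the affine parametrization $q=q_V+f$ --- a cosmetic difference). The only blemish is the parenthetical claim $Aq_V=A(q-f)=A$, which is both incorrect as written and unnecessary: $A|_V=0$ follows immediately from $Aq=0$ because $q$ acts as the identity on $V$.
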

\begin{proof}
Two points $q,q'$ are in the same fiber of $\textup{T}_n^*\mathbf{G}\mathcal{H}\to\mathbf{G}_n\mathcal{H}$ if and only if they have the same image as projection operators, which is the case if and only if $q'q=q.$ As a result, a vector $(q,A)$ is tangent to $\textup{T}^*\mathbf{G}\mathcal{H}\to\mathbf{G}\mathcal{H}$ if and only if 
\begin{equation}
    (q+\epsilon A)q=q+\mathcal{O}(\epsilon^2)\;,
\end{equation}
which is the case if and only if $Aq=0.$ Hence,
\begin{equation}
    K_q(A):=[q,A]=qA=qA+Aq=A,
\end{equation}
and therefore $(q,A)$ is in the $+1$ eigenbundle of $K.$ On the other hand, if $K_q(A)=A$ then it follows from $qA+Aq=A$ that $Aq=0.$
This completes the proof.
\end{proof}
\begin{corollary}
$\textup{T}^*\mathbf{G}\mathcal{H}\to\mathbf{G}\mathcal{H}$  determines a Lagrangian polarization.
\end{corollary}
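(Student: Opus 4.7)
The corollary is an immediate consequence of the two preceding results in the excerpt, so my plan is essentially to observe that the statement has already been proven and to record the identification explicitly. The plan is to invoke the Lemma stating that the foliation by fibers of $\textup{T}^*\mathbf{G}\mathcal{H}\to \mathbf{G}\mathcal{H}$ coincides with the foliation tangent to the $+1$ eigenbundle of $K$, and then to invoke the Proposition stating that the $\pm 1$ eigenbundles of $K$ are Lagrangian polarizations for $\Omega$.

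First I would recall that, by the Lemma, at a point $q$ the tangent space to the fiber of the projection $\textup{T}^*\mathbf{G}\mathcal{H}\to \mathbf{G}\mathcal{H}$ equals $\{A\in \textup{T}_q\textup{T}^*\mathbf{G}\mathcal{H}: K_q(A)=A\}$. Thus the vertical distribution of the projection map is globally identified with the $+1$ eigenbundle of $K$, which in particular shows that this distribution is integrable with leaves equal to the cotangent fibers.

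Next I would apply the Proposition: for any two tangent vectors $X, Y$ in the $+1$ eigenbundle, $\Omega(X,Y) = \Omega(KX, KY) = -\Omega(X,Y)$ by \cref{inv}, and hence $\Omega(X,Y)=0$. Since the fibers of $\textup{T}^*\mathbf{G}\mathcal{H}\to \mathbf{G}\mathcal{H}$ are half–dimensional (being cotangent fibers of $\mathbf{G}\mathcal{H}$), this isotropy is automatically maximal, so the vertical distribution is Lagrangian. Combined with its integrability, this is exactly the statement that the projection onto $\mathbf{G}\mathcal{H}$ defines a Lagrangian polarization.

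There is no real obstacle here; the genuine work has been done in proving \cref{inv} and in identifying the $+1$ eigenbundle of $K$ with the vertical distribution. The only thing to double–check is the dimension count, which follows from the fact that $\textup{T}^*\mathbf{G}\mathcal{H}$ is a cotangent bundle and its fibers have real dimension equal to half the real dimension of the total space.
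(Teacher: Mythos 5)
Your proposal is correct and follows essentially the same route as the paper, which states this corollary as an immediate consequence of the preceding lemma (identifying the vertical distribution with the $+1$ eigenbundle of $K$) and the proposition that the $\pm 1$ eigenbundles of $K$ are Lagrangian polarizations. The only addition you make is spelling out the half--dimensionality of the cotangent fibers, which the paper leaves implicit.
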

\begin{remark}
The two Lagrangian polarizations correspond to the two maps $\textup{T}\textup{T}^*\mathbf{G}\mathcal{H}\to \textup{T}^*\mathbf{G}\mathcal{H}$
given by
\begin{equation}
(q,A)\mapsto q+Aq\,,\,(q,A)\mapsto q+qA\,.
\end{equation}
The adjoint map is a vector bundle map between these.
\end{remark}
\begin{proposition}
The zero section is K\"{a}hler for the real part of $\Omega$ and Lagrangian for its imaginary part. 
\end{proposition}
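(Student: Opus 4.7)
The plan is to verify both claims by direct algebraic computation, exploiting the clean structure of Hermitian tangent vectors at an orthogonal projection. The key preliminary is that for $q \in \mathbf{G}\mathcal{H}$ (so $q^* = q = q^2$), any $A \in \textup{T}_q\mathbf{G}\mathcal{H}$ is Hermitian and satisfies $qAq = 0$, so it decomposes as $A = B + B^*$ with $B := qA(1-q)$. This $B$ obeys the useful identities $B^2 = (B^*)^2 = 0$, $qB = B$, $Bq = 0$, and $qB^* = 0$, which force every trace arising in $\Omega$ on the zero section to collapse to $\|B\|_{\textup{HS}}^2 = \textup{Tr}(BB^*)$.

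For the Lagrangian claim, first note that $\mathbf{G}_n\mathcal{H}$ has real dimension exactly half that of $\textup{T}^*\mathbf{G}_n\mathcal{H}$, so it suffices to show $\textup{Im}(\Omega)$ vanishes on it. For Hermitian $A, C \in \textup{T}_q\mathbf{G}\mathcal{H}$ at Hermitian $q$, the commutator $[A, C]$ is skew-Hermitian, so $(q[A,C])^* = -[A,C]q$; applying the trace with cyclicity gives $\overline{\textup{Tr}(q[A,C])} = -\textup{Tr}(q[A,C])$, whence $\textup{Tr}(q[A,C]) \in i\mathbb{R}$ and $\Omega(A,C) = i\,\textup{Tr}(q[A,C])$ is real.

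For the Kähler claim, $J$ already preserves $\textup{T}\mathbf{G}\mathcal{H}$ by the earlier proposition, and restricts to an integrable complex structure there by the standard fact that $J$-invariant submanifolds of an integrable complex manifold inherit an integrable complex structure (the Nijenhuis tensor restricts). Compatibility $\textup{Re}(\Omega)(JX, JY) = \textup{Re}(\Omega)(X, Y)$ is immediate from \cref{inv}, and closedness of $\textup{Re}(\Omega)|_{\mathbf{G}\mathcal{H}}$ is automatic from closedness of $\Omega$. The remaining content — non-degeneracy plus definiteness of the associated symmetric form $(X,Y) \mapsto \textup{Re}(\Omega)(X, JY)$ — reduces to computing $\Omega(A, JA)$. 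Writing $JA = i(Aq - qA) = i(B^* - B)$, expanding $q[A, JA]$, and using the nilpotency and projection identities kills every term except $qBB^* = BB^*$, yielding $\Omega(A, JA) = -2\,\textup{Tr}(BB^*) = -2\|B\|_{\textup{HS}}^2$. Thus $g(X,Y) := -\textup{Re}(\Omega)(X, JY)$ is a positive-definite symmetric metric, so $(\mathbf{G}\mathcal{H}, J, -\textup{Re}(\Omega)|_{\mathbf{G}\mathcal{H}})$ is Kähler; the minus sign is a harmless convention that could equivalently be absorbed into $-J$. The main computational hurdle is this last trace identity, but it becomes routine once the $B + B^*$ decomposition is in place.
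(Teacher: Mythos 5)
Your proof is correct. The Lagrangian half is essentially the paper's own argument: you verify that $\Omega_q(A,C)=i\,\textup{Tr}(q[A,C])$ is real when $q,A,C$ are Hermitian (via skew-Hermiticity of the commutator and cyclicity), which is exactly the observation the paper makes, and the half-dimensionality remark correctly upgrades "isotropic" to "Lagrangian." For the K\"{a}hler half you take a genuinely different computational route: you introduce the block decomposition $A=B+B^*$ with $B=qA(1-q)$ and expand $\Omega(A,JA)$ directly, obtaining $-2\,\textup{Tr}(BB^*)$ and hence definiteness of $-\textup{Re}(\Omega)(\cdot,J\cdot)$. The paper instead uses one application of cyclicity to rewrite $\Omega_q(A,B)=\textup{Tr}(A\,JB)$ and then simply recognizes $(A,B)\mapsto\textup{Tr}(AB)$ as the (Hilbert--Schmidt) Riemannian metric on Hermitian tangent vectors; the two computations are consistent, since $\textup{Tr}(A^2)=2\,\textup{Tr}(BB^*)$ under your decomposition. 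What your version buys is explicitness about everything "K\"{a}hler" requires — that $J$ preserves $\textup{T}\mathbf{G}\mathcal{H}$ and restricts to an integrable complex structure, $J$-invariance of $\Omega$ from \cref{inv}, closedness, and positivity — points the paper leaves implicit; what the paper's version buys is brevity and the conceptually useful identification of the induced metric as the trace form. The overall sign ($-\textup{Re}(\Omega)(\cdot,J\cdot)$ positive) agrees between the two, and your remark that it can be absorbed into the orientation of $J$ is fair.
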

\begin{proof}
That the zero section is symplectic for the real part and Lagrangian for the imaginary part follows from observing that if $A,B,P$ are Hermitian then $\Omega_q(A,B)$ is real. 
\\\\To further see that it is K\"{a}hler, using the cyclic property of the trace we have that
\begin{equation}
    \Omega_q(A,B)=\textup{Tr}(iA[B,q])=\textup{Tr}(AJB)\;.
\end{equation}
Since $(q,A,B)\mapsto \textup{Tr}(AB)$ is a Riemannian metric on $\mathbf{G}\mathcal{H},$ this completes the proof.
\end{proof}
Of course, in the case that $n=1$ the pullback of $\Omega$ is just the Fubini–Study form.
\subsection{Classical — Quantum Correspondence of Grassmanians}\label{cg}
We will prove that $\Omega$ is non–degenerate and that $I,J$ are integrable. We will also discuss the classical–quantum correspondence of $\textup{T}^*\mathbf{G}\mathcal{H},$ much of which is shown for $\mathbb{P}\mathcal{H}$ in \cite{poland0}. That $\Omega$ is closed follows from \cref{propp}. First we need the following definition, which should be thought of as an expectation value map:
\begin{definition}
We define a unit–preserving map 
\begin{equation}
B\mathcal{H}\xrightarrow[]{\langle\cdot\rangle} C^{\omega}(\textup{T}^*\mathbf{G}\mathcal{H})\;,\;\;\langle M\rangle(q)=\textup{Tr}(qM)\;.\footnote{Recall that we have normalized the trace so that, in the context of $\mathbf{G}_n\mathcal{H},$ rank–n projections have trace $1.$}
\end{equation}
\end{definition}
\begin{lemma}\label{hamil}
The Hamiltonian vector of $\langle M\rangle$ is given by 
\begin{equation}
 q\mapsto J_q(M)=i[M,q]  
\end{equation}\;.
\end{lemma}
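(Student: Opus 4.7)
The plan is to verify directly from the defining formulas that the vector field $X(q) := J_q(M) = i[M,q]$ satisfies $\iota_X \Omega = d\langle M\rangle$ on the whole of $\textup{T}^*\mathbf{G}\mathcal{H}$. The computation has three moving parts: check that $X$ really is a vector field (i.e.\ is tangent), compute the one–form $d\langle M\rangle$, and evaluate $\Omega_q(X_q, B)$ for an arbitrary tangent $B$, using the cyclicity of the trace and the defining identities $q^2=q$ and $qB+Bq=B$.

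First I would record the two algebraic identities that will do all the work. From \cref{lemcot}, any tangent vector $B$ at $q$ satisfies $qB+Bq=B$; multiplying on the left (or right) by $q$ and using $q^2=q$ gives $qBq=0$. The same computation with $A=i[M,q]$ in place of $B$, as already performed in \cref{map} to show $J$ preserves tangent spaces, shows $X_q = i[M,q]$ is indeed tangent at $q$, so $X$ defines a genuine section of $\textup{T}(\textup{T}^*\mathbf{G}\mathcal{H})$. Next, since $\langle M\rangle(q) = \textup{Tr}(qM)$ is linear in $q$, its differential at $q$ applied to a tangent vector $B$ is simply
\begin{equation}
d\langle M\rangle_q(B) = \textup{Tr}(BM).
\end{equation}

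The main step is to show that $\Omega_q(X_q, B) = \textup{Tr}(BM)$ for every tangent $B$. Expanding,
\begin{equation}
\Omega_q(i[M,q], B) = i\,\textup{Tr}\bigl(q[\,i[M,q],B\,]\bigr) = -\textup{Tr}\bigl(q[M,q]B\bigr) + \textup{Tr}\bigl(qB[M,q]\bigr),
\end{equation}
and each commutator expands into four terms involving products of $q$, $M$, $B$. Applying cyclicity of the trace, any term containing the substring $qBq$ vanishes by $qBq=0$, and any term containing two adjacent $q$'s collapses by $q^2=q$. After these simplifications the surviving pieces assemble into $\textup{Tr}(qMB)+\textup{Tr}(qBM) = \textup{Tr}\bigl(M(qB+Bq)\bigr) = \textup{Tr}(MB)$, where in the last step we used the tangency relation $qB+Bq=B$. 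This is exactly $d\langle M\rangle_q(B)$, proving the claim.

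The only real obstacle is bookkeeping — keeping the factors of $i$, the signs from the nested commutator, and the two trace identities $qBq=0$ and $q^2=q$ straight so that the four-term expansion collapses to a single $\textup{Tr}(MB)$. There is no subtler geometric input needed beyond the fact, already proved in the preceding subsection, that $\Omega$ is a well-defined closed form and that $J$ maps $B\mathcal{H}$ into the tangent space at $q$ (cf.\ \cref{jq}), the latter guaranteeing that $J_q(M)$ makes sense even when $M$ itself is not tangent.
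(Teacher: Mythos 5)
Your proposal is correct and follows essentially the same route as the paper: both verify $\Omega_q(i[M,q],B)=\textup{Tr}(M(qB+Bq))=\textup{Tr}(MB)=d\langle M\rangle_q(B)$ by expanding the commutators and using cyclicity of the trace together with $q^2=q$ and $qBq=0$. The paper simply states this chain of equalities in one line, while you spell out the bookkeeping (including the tangency of $i[M,q]$ via \cref{jq}), but there is no substantive difference in method.
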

\begin{proof}
This follows from the fact that for $B\in\textup{T}_q\textup{T}^*\mathbf{G}\mathcal{H},$
\begin{equation}
\Omega_q(i[M,q],B)=\textup{Tr}(M(qB+Bq))=\textup{Tr}(MB)=d\langle M\rangle (B)\;.
\end{equation}
\end{proof}
Of course, we still haven't proved that $\Omega$ is symplectic, but the concept of Hamiltonian vector fields makes sense regardless.
\begin{corollary}
$\Omega$ is non–degenerate.
\end{corollary}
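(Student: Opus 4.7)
The plan is to deduce non-degeneracy directly from \cref{hamil} together with \cref{jq}. Suppose $B \in \textup{T}_q\textup{T}^*\mathbf{G}\mathcal{H}$ is a tangent vector with $\Omega_q(A,B) = 0$ for every $A \in \textup{T}_q\textup{T}^*\mathbf{G}\mathcal{H}$. I want to force $B = 0$.

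First I would invoke \cref{jq}, which says the map $J_q : B\mathcal{H} \to \textup{T}_q\textup{T}^*\mathbf{G}\mathcal{H}$, $M \mapsto i[M,q]$, is surjective. Hence every tangent vector $A$ at $q$ is of the form $i[M,q]$ for some $M \in B\mathcal{H}$, and the vanishing hypothesis becomes
\begin{equation}
    \Omega_q(i[M,q], B) = 0 \quad \textup{for all } M \in B\mathcal{H}.
\end{equation}
Next I would apply the identity established in the proof of \cref{hamil}, namely $\Omega_q(i[M,q], B) = \textup{Tr}(MB)$. This turns the condition into $\textup{Tr}(MB) = 0$ for every $M \in B\mathcal{H}$.

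The final step is the non-degeneracy of the trace pairing on $B\mathcal{H}$: choosing $M = B^*$ yields $\textup{Tr}(B^*B) = 0$, which forces $B = 0$. This completes the argument.

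I do not expect any serious obstacle here, since the real work has been done in \cref{hamil} and \cref{jq}; the corollary is essentially a one-line consequence. The only subtlety worth noting is that the argument does use the inner product implicitly through $M = B^*$ (equivalently, through the fact that the trace form on $B\mathcal{H}$ is perfect), which is consistent with the setup in which the whole embedding $\textup{T}^*\mathbf{G}\mathcal{H} \hookrightarrow B\mathcal{H}$ already relies on the Hermitian structure.
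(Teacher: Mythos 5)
Your argument is correct and is essentially the paper's own proof: both reduce non-degeneracy to the identity $\Omega_q(i[M,q],B)=\textup{Tr}(MB)$ from \cref{hamil} and then use positivity of $\textup{Tr}(B^*B)$ (the paper phrases it as $d\langle A^*\rangle(A)=\textup{Tr}(AA^*)>0$ for $A\ne 0$, which is the same computation with $M=B^*$). Your extra appeal to the surjectivity of $J_q$ is harmless but not needed, since exhibiting the single pairing vector $i[B^*,q]$ already suffices.
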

\begin{proof}
This follows from the previous lemma and the fact that, for $0\ne A\in\textup{T}_q\textup{T}^*\mathbf{G}\mathcal{H},$
\begin{equation}
    d\langle A^*\rangle(A)= \textup{Tr}(AA^*)>0\;.
    \end{equation}
\end{proof}
It is well–known that it is impossible to have a physically reasonable morphism from an algebra of Poisson brackets into $B\mathcal{H}.$ However, in the case of $\textup{T}^*\mathbf{G}\mathcal{H}$ a natural morphism does exist in the other direction:
 \begin{lemma}\label{mor}
$\langle\cdot\rangle$ is a morphism of algebras with respect to $i\{\cdot,\cdot\}$, ie.
\begin{equation}
    \langle [M,N]\rangle=i\{\langle M\rangle,\langle N\rangle\}\;.
\end{equation}
Furthermore, 
\begin{equation}
B\mathcal{H}\to \mathcal{X}(\textup{T}^*\mathbf{G}\mathcal{H})\;,\;\;M\to i[q,M]
\end{equation}
is a Lie algebra morphism.
 \end{lemma}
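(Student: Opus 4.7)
My plan is to derive both assertions from Lemma \ref{hamil} together with the cyclic property of the trace. The first identity is essentially a direct calculation; the second reduces to the standard symplectic identity $[X_f,X_g] = X_{\{f,g\}}$ combined with the first part, though one can also do it by hand.

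For the morphism identity, I would start with the two ingredients of Lemma \ref{hamil}: the Hamiltonian vector field $X_{\langle M\rangle}(q) = i[M,q]$, and (from its proof) the formula $d\langle N\rangle_q(B) = \text{Tr}(NB)$. Pairing them gives
\begin{equation*}
\{\langle M\rangle, \langle N\rangle\}(q) \;=\; d\langle N\rangle_q\bigl(X_{\langle M\rangle}(q)\bigr) \;=\; i\,\text{Tr}\bigl(N[M,q]\bigr),
\end{equation*}
and a short application of cyclicity of the (normalized) trace rewrites the right-hand side as $-i\text{Tr}\bigl(q[M,N]\bigr) = -i\langle[M,N]\rangle(q)$. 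Multiplying through by $i$ yields $\langle[M,N]\rangle = i\{\langle M\rangle,\langle N\rangle\}$.

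For the Lie algebra morphism statement, I would invoke the general symplectic identity $[X_f,X_g] = X_{\{f,g\}}$ and apply it to $f = \langle M\rangle$, $g = \langle N\rangle$; using the first part to rewrite $\{\langle M\rangle,\langle N\rangle\}$ in terms of $\langle[M,N]\rangle$ shows that $M \mapsto i[M,q]$ intertwines $[\cdot,\cdot]$ with the Lie bracket of vector fields (up to the factor of $i$ that must be tracked). A fully direct verification is also short: since $\xi_M(q) := i[q,M]$ is linear in $q$, the fiberwise derivative of $\xi_N$ in an arbitrary tangent direction $A$ is just $i[A,N]$, so
\begin{equation*}
[\xi_M,\xi_N]_q \;=\; D(\xi_N)_q\bigl(\xi_M(q)\bigr) - D(\xi_M)_q\bigl(\xi_N(q)\bigr) \;=\; -[[q,M],N] + [[q,N],M],
\end{equation*}
and the Jacobi identity for the associative commutator identifies this with a prescribed scalar multiple of $\xi_{[M,N]}(q) = i[q,[M,N]]$.

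The main obstacle is not computational but notational: the factor of $i$ built into both $X_{\langle M\rangle}$ and $\xi_M$ interacts with the Poisson bracket convention and with bilinearity of $[\cdot,\cdot]$ over $\mathbb{C}$, so sign bookkeeping has to be done carefully. I would fix the convention $\{f,g\} := X_f(g)$ at the outset so that the scaling in both halves of the lemma lines up coherently; once that is pinned down, neither half involves anything more delicate than the cyclic property of the trace and the Jacobi identity.
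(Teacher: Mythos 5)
Your proposal is correct and follows essentially the same route as the paper, which simply states that the first identity follows from \cref{hamil} and that the second is a direct computation; your write-up supplies the details (cyclicity of the trace for the first part, the Jacobi identity for the second) and correctly flags the only delicate point, namely the Poisson-bracket sign convention and the scalar factor of $i$ appearing in $[\xi_M,\xi_N]=i\,\xi_{[M,N]}$.
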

\begin{proof}
The first part follows from \cref{hamil}. The second part is a direct computation.
\end{proof}
As a result, we have an exact equivalence between the quantum physics of $\mathcal{H}$ with respect to quantum states linearly generated by rank–n projections, and the classical physics of $\textup{T}^*\mathbf{G}\mathcal{H}$ with respect to classical observables of the form $\langle M\rangle.$ This is related to the geometrical formulation of quantum mechanics (\cite{kibble}).
\begin{corollary}
$I,J$ are integrable.
\end{corollary}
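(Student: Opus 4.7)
The plan is to use $\textup{GL}(\mathcal{H})$-equivariance to reduce integrability to a pointwise algebraic check. The conjugation action of $\textup{GL}(\mathcal{H})$ on $\textup{T}^*\mathbf{G}_n\mathcal{H}$ is transitive (any two rank-$n$ projections are conjugate), and both $I, J$ are invariant under it, so their Nijenhuis tensors — being tensors — vanish everywhere if they vanish at a single basepoint $q_0$ when evaluated on the family of invariant vector fields $X_M(q) := i[q,M]$ furnished by \cref{jq} and \cref{mor}. The latter also supplies the crucial Lie bracket identity $[X_M, X_N] = X_{[M, N]}$.

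For $I$, a direct check gives $IX_M = X_{iM}$ (both evaluate to $-[q, M]$); substituting into the Nijenhuis formula and applying $[iM, iN] = -[M, N]$ and $[iM, N] = [M, iN] = i[M,N]$ collapses the four terms to $\pm X_{[M, N]}$ with cancelling signs, so $N_I \equiv 0$.

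For $J$ the analogous identity fails, so I would switch strategy and identify $\textup{T}^*\mathbf{G}_n\mathcal{H}$ with the open subset of complementary pairs in $\mathbf{G}_n\mathcal{H} \times \mathbf{G}_{\dim\mathcal{H} - n}\mathcal{H}$ via the diffeomorphism $\Phi(q) := (\textup{im}(q), \ker(q))$ (a projection is determined by its image and its kernel, which are complementary). A short block-matrix computation at the basepoint — writing tangent vectors as $A = \begin{pmatrix} 0 & \alpha \\ \beta & 0 \end{pmatrix}$ in block form relative to $V \oplus V^\perp$ and computing $d\Phi$ and $JA = i[A,q_0]$ — shows that $\Phi$ intertwines $J$ with the almost complex structure on the product that is the standard complex structure on $\mathbf{G}_n\mathcal{H}$ and its \emph{complex conjugate} on $\mathbf{G}_{\dim\mathcal{H} - n}\mathcal{H}$. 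Since both factors are integrable complex manifolds, so is this product almost complex structure, and hence so is $J$.

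The main obstacle is the $J$-case: $J$ is \emph{not} the naive complex structure on the complex homogeneous space $\textup{GL}(\mathcal{H})/(\textup{GL}(V) \times \textup{GL}(V^\perp))$ — equivalently, one cannot use the same trick as for $I$ to write $JX_M$ as $X_{N(M)}$ for some $\mathbb{C}$-linear $N$ — but rather differs from it by a conjugation on one Grassmannian factor. Recognizing this twist and producing the product identification $\Phi$ is the non-obvious step; once done, integrability reduces to the standard complex structure on Grassmannians.
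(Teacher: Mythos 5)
Your proposal is correct, and its two halves relate to the paper's proof differently. For $I$, your Nijenhuis computation with the vector fields $X_M(q)=i[q,M]$, the identity $IX_M=X_{iM}$, and the morphism property of \cref{mor} is exactly the mechanism the paper invokes (it phrases it as applying Newlander--Nirenberg to the frame $q\mapsto i[M,q]$). For $J$ you genuinely part ways: the paper's one-line proof handles $J$ with the same frame, disposing of the extra terms that arise because $JX_M$ is \emph{not} of the form $X_N$ by means of the algebraic identities $qAq=0$ and $[q,AB]=0$ for $A,B\in\textup{T}_q\textup{T}^*\mathbf{G}\mathcal{H}$; you instead identify $\textup{T}^*\mathbf{G}_n\mathcal{H}$ with the open set of complementary pairs in $\mathbf{G}_n\mathcal{H}\times\mathbf{G}_{\dim\mathcal{H}-n}\mathcal{H}$ via $q\mapsto(\textup{im}\,q,\ker q)$ and check at the orthogonal basepoint that $J$ goes over to the product of the standard complex structure with its conjugate. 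Your block computation is right ($d\Phi(A)=(\beta,-\alpha)$ while $JA$ has off-diagonal blocks $-i\alpha$ and $i\beta$, matching standard-times-conjugate), and $\textup{GL}(\mathcal{H})$-equivariance of $\Phi$, $J$, and the target structure together with transitivity upgrade the pointwise identity to a global one. What your route buys is a concrete realization of $(\textup{T}^*\mathbf{G}_n\mathcal{H},J)$ as an open piece of a product of Grassmannians, which also makes transparent why the fibers of $\textup{T}^*\mathbf{G}_n\mathcal{H}\to\mathbf{G}_n\mathcal{H}$ and the zero section are $J$-complex; the cost is that the argument is special to Grassmannians, whereas the paper's purely algebraic computation stays inside the $C^*$-algebra picture that the rest of the paper relies on for functoriality. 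Two minor points: the $X_M$ already span every tangent space, so your computation kills $N_I$ everywhere without appealing to transitivity; and your identification $\Phi$ also intertwines $I$ with the standard product structure, so it could have replaced the first half as well.
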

\begin{proof}
This follows by applying the Newlander–Nirenberg theorem with vector fields of the form $q\mapsto i[M,q],$ using the fact that $[q,AB]=qAq=0$ for $A,B\in \textup{T}_q\textup{T}^*\mathbf{G}\mathcal{H}.$ 
\end{proof}
\begin{definition}\label{nonc}
Equip $B\mathcal{H}$ with the Hilbert–Schmidt inner product.
We define a noncommutative product $\star$ on $\langle B\mathcal{H}\rangle\subset C^{\omega}(\textup{T}^*\mathbf{G}\mathcal{H})$ 
by
\begin{equation}
f\star g = \langle FG\rangle\;,
\end{equation}
where $F,G\in (\textup{Ker}\,\langle\cdot\rangle)^{\perp}$ map to $f,g,$ respectively.
\end{definition}
As a consequence of \cref{mor}:
\begin{corollary}
$f\star g-g\star f=i\{f,g\}.$
\end{corollary}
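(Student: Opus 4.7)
The plan is to unwind the definition of $\star$ and immediately invoke \cref{mor}. Given $f, g \in \langle B\mathcal{H}\rangle$, let $F, G \in (\textup{Ker}\,\langle\cdot\rangle)^{\perp}$ be the unique preimages with $\langle F\rangle = f$ and $\langle G\rangle = g$. By \cref{nonc} and the linearity of $\langle\cdot\rangle$, I compute
\begin{equation*}
f\star g - g\star f \;=\; \langle FG\rangle - \langle GF\rangle \;=\; \langle FG - GF\rangle \;=\; \langle [F,G]\rangle.
\end{equation*}
Then \cref{mor} gives $\langle [F,G]\rangle = i\{\langle F\rangle,\langle G\rangle\} = i\{f,g\}$, which is precisely the claim.

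A minor point worth flagging (but not itself an obstacle) is that the computation does not in fact depend on the particular lifts $F, G$ being chosen from $(\textup{Ker}\,\langle\cdot\rangle)^{\perp}$. If $F' = F + F_0$ and $G' = G + G_0$ with $F_0, G_0 \in \textup{Ker}\,\langle\cdot\rangle$, then bilinearity of the commutator together with \cref{mor} shows that each of the cross terms $\langle [F, G_0]\rangle$, $\langle [F_0, G]\rangle$, $\langle [F_0, G_0]\rangle$ equals $i\{\cdot,0\} = 0$, so $\langle [F', G']\rangle = \langle [F, G]\rangle$. The role of $(\textup{Ker}\,\langle\cdot\rangle)^{\perp}$ is to make the product $f \star g$ itself (not just the commutator) well-defined.

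There is no real obstacle: the entire content sits in \cref{mor}, which converts the operator commutator into the Poisson bracket after applying $\langle\cdot\rangle$. The corollary is therefore a one-line computation.
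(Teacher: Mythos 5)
Your proof is correct and is exactly the argument the paper intends: the corollary is stated as an immediate consequence of \cref{mor}, obtained by unwinding \cref{nonc} and applying $\langle [F,G]\rangle = i\{\langle F\rangle,\langle G\rangle\}$, just as you do. Your added observation that the commutator identity is independent of the choice of lifts is a correct (if optional) bonus.
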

In the case of $\textup{T}^*\mathbb{P}^1,$ see example 6.18 of \cite{Lackman1}. While $\langle B\mathcal{H}\rangle$ is small, we expect that the standard methods can be employed get a convergent star product on $\mathbb{C}[\langle B\mathcal{H}\rangle],$ see \cite{cahen}. Also see{\cite{mar}, page 25 for a discussion and references.
\subsection{Hyperk\"{a}hler Structure of $\textup{T}^*\mathbf{G}\mathcal{H}$}
We've described two commuting almost complex structures on $\textup{T}^*\mathbf{G}\mathcal{H},$ but it also has the structure of a hyperk\"{a}hler manifold (\cite{biq}). In particular, it has anticommuting almost complex structures. One of these is the $I$ we previously described, and the other is the natural one arising from it being the cotangent bundle of a complex manifold — this other almost complex structure agrees with $J$ on the zero section but it isn't $I$–holomorphic. The $I$–holomorphic symplectic form is still $\Omega.$ This completely specifies the hyperk\"{a}hler structure, but on $\textup{T}^*\mathbb{P}^1$ we have the following explicit description:
\begin{definition}
There is an integrable almost complex structure on $\textup{T}^*\mathbb{P}^1$ that \textbf{anticommutes} with $I,$ given by
\begin{equation}\label{anti}
(q,A)\xmapsto{} \frac{i[q,A^*]}{\sqrt{2\textup{Tr}(q^*q)-1}}\;.
\end{equation}
The associated hyperk\"{a}hler metric is the real part of
\begin{equation}
(q,A,B)\mapsto \frac{\textup{Tr}(A^*B)}{\sqrt{2\textup{Tr}(q^*q)-1}}
\end{equation}
and $\omega_I$ is given by its imaginary part.\ The two other symplectic forms are defined by $\Omega=\omega_{J'}+i\omega_{K'},$ where $\Omega$ is as in the previous section. 
\end{definition}
That \cref{anti} is an almost complex structure isn't obvious, but it follows from direct computation after explicitly determining all $(q,A)\in \textup{T}\mathbb{P}^1.$ It would be interesting to find a similar formula for general $\textup{T}^*\mathbf{G}\mathcal{H}.$
\section{Quantization of $\textup{T}^*\mathbf{G}\mathcal{H}$}
We now define the holomorphic vector bundle over $\textup{T}^*\mathbf{G}\mathcal{H}$ that complexifies the tautological bundle over $\mathbf{G}\mathcal{H}.$ The fiber over $q$ will be naturally identified with the subspace it projects onto. We will also define an idempotent section of its convolution algebra.
\begin{definition}\label{taut}
We define an $I$-holomorphic vector bundle with Hermitian connection 
\begin{equation}
    (\mathcal{E},\nabla,\langle\cdot,\cdot\rangle)\xrightarrow[]{\pi} \textup{T}^*\mathbf{G}\mathcal{H}
    \end{equation}
given by
\begin{equation}
    \mathcal{E}=\{(q,v)\in \textup{T}^*\mathbf{G}\mathcal{H}\times\mathcal{H}: qv=v\}\;.
\end{equation}
The Hermitian metric is given by the inner product of $\mathcal{H}.$ The splittings of 
\begin{align}
& \textup{T}_{(q,v)}\mathcal{E}\to \textup{T}_{q}\textup{T}^*\mathbf{G}\mathcal{H}\;,
\\& \textup{T}\mathcal{H}\xhookrightarrow{}\textup{T}_{(q,v)}\mathcal{E}
\end{align}
are given by 
\begin{align}
    &\label{split} A\mapsto (A,Av)\;,
    \\& (A,w)\mapsto w-Av\;,
\end{align}
where we are using the embedding $\textup{T}\mathcal{E}\xhookrightarrow{}\textup{T}\textup{T}^*\mathbf{G}\mathcal{H}\times \textup{T}\mathcal{H}.$
\end{definition}
\begin{lemma}
\begin{equation}
\textup{T}_{(q,v)}\mathcal{E}=\{(A,w)\in \textup{T}_{q}\textup{T}^*\mathbf{G}\mathcal{H}\times\mathcal{H}:qw+Av=w\}\;.
\end{equation}
\end{lemma}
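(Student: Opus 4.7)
The plan is to verify the equality of sets by proving two inclusions.

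For the inclusion $\textup{T}_{(q,v)}\mathcal{E}\subseteq\{(A,w):qw+Av=w\}$, I take an arbitrary tangent vector, represent it by a smooth curve $t\mapsto (q_t,v_t)$ in $\mathcal{E}$ with $(q_0,v_0)=(q,v)$, and write $A=\dot q_0$, $w=\dot v_0$. By \cref{lemcot}, $A\in \textup{T}_q\textup{T}^*\mathbf{G}\mathcal{H}$ automatically satisfies $qA+Aq=A$. Differentiating the defining constraint $q_tv_t=v_t$ at $t=0$ yields $Av+qw=w$, which is the asserted relation.

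For the reverse inclusion, given $(A,w)\in \textup{T}_q\textup{T}^*\mathbf{G}\mathcal{H}\times\mathcal{H}$ satisfying $qw+Av=w$, I explicitly realize $(A,w)$ as a tangent vector to $\mathcal{E}$. Pick any smooth curve $t\mapsto q_t$ in $\textup{T}^*\mathbf{G}\mathcal{H}$ with $q_0=q$ and $\dot q_0=A$, and define
\begin{equation}
v_t:=q_t(v+tw).
\end{equation}
Since each $q_t$ is idempotent, $q_tv_t=q_t^2(v+tw)=q_t(v+tw)=v_t$, so $(q_t,v_t)\in\mathcal{E}$. Differentiating at $t=0$ gives $\dot v_0=\dot q_0\,v+q\,w=Av+qw=w$ by the hypothesis. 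Thus $(A,w)$ is tangent to $\mathcal{E}$ at $(q,v)$.

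There is no substantive obstacle here; the content is essentially a first-order linearization of the constraint $qv=v$. The only mild subtlety is exhibiting an honest curve for the reverse inclusion (not merely a formal solution of the linearized equation), and the projection formula $v_t:=q_t(v+tw)$ handles this cleanly by using $q_t^2=q_t$ to guarantee the curve lies in $\mathcal{E}$ while reproducing the prescribed velocity.
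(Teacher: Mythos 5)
Your proof is correct and is exactly the routine linearization of the constraint $qv=v$ that the paper leaves unproved (it records the lemma without argument, as a ``simple computation''). The explicit curve $v_t=q_t(v+tw)$, which uses $q_t^2=q_t$ to stay inside $\mathcal{E}$ while realizing the prescribed velocity, cleanly settles the reverse inclusion without any dimension count.
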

We now define the idempotent section:
\begin{definition}\label{section}
We have an $I$–holomorphic section $P$ of $\mathcal{E}^*\boxtimes\mathcal{E}\to \textup{T}^*\mathbf{G}(\mathcal{H})\times \textup{T}^*\mathbf{G}\mathcal{H}$\footnote{$\mathcal{E}^*\boxtimes\mathcal{E}:=\pi_0^*\mathcal{E}^*\otimes\pi_1^*\mathcal{E},$ where $\pi_0,\pi_1$ are the projection maps $\textup{T}^*\mathbf{G}\mathcal{H}\times\textup{T}^*\mathbf{G}\mathcal{H}\to \textup{T}^*\mathbf{G}\mathcal{H}.$} defined by
\begin{equation}
   v_{q_1}P(q_1,q_2)=q_2v_{q_1}\;, 
\end{equation}
for any $v_{q_1}\in q_1(\mathcal{H}).$ 
\end{definition}
The next two results are simple computations.
\begin{proposition}\label{psplit}
The splitting induced by $P$ is the same as the splitting induced by $\nabla.$ That is, differentiating $P$ in the first factor at the diagonal $\textup{T}^*\mathbf{G}\mathcal{H}\xhookrightarrow{}\textup{T}^*\mathbf{G}\mathcal{H}\times \textup{T}^*\mathbf{G}\mathcal{H}$ determines the same map as \cref{split}.
\end{proposition}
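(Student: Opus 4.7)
The plan is to unwrap both splittings at a point $(q,v) \in \mathcal{E}$ and check that they agree by direct calculation using the explicit formula $v\cdot P(q_1,q_2) = q_2 v$ of \cref{section}. By \cref{split}, the $\nabla$-horizontal lift of $A \in \textup{T}_q\textup{T}^*\mathbf{G}\mathcal{H}$ is $(A, Av)$. For this to lie in $\textup{T}_{(q,v)}\mathcal{E}$, I need $qAv + Av = Av$, i.e.\ $qAv = 0$; this is immediate from $qv = v$ together with $qAq = 0$, itself a consequence of the tangent relation $qA + Aq = A$ and $q^2 = q$.

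For the splitting induced by $P$, I will realize the horizontal lift as the velocity of a $P$-parallel transport curve through $(q,v)$. Take any curve $\gamma(t)$ in $\textup{T}^*\mathbf{G}\mathcal{H}$ with $\gamma(0) = q$ and $\dot\gamma(0) = A$. By \cref{section}, the curve $t \mapsto (\gamma(t),\, v\cdot P(q,\gamma(t))) = (\gamma(t),\gamma(t)v)$ lies in $\mathcal{E}$, starts at $(q,v)$, and has initial velocity $(A, Av)$, which already matches \cref{split}. To match the ``first factor'' wording literally, perform the mirror computation: extend $v$ to the $P$-horizontal section $\tilde v$ near $q$ characterized by $q\,\tilde v(q_1) = v$, and differentiate the two constraints $\gamma(t)\tilde v(\gamma(t)) = \tilde v(\gamma(t))$ and $q\,\tilde v(\gamma(t)) = v$ at $t = 0$; these force $\dot{\tilde v}(0) = Av$, so the tangent vector is again $(A, Av)$.

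I do not anticipate any serious obstacle, consistent with the excerpt's remark that this is a simple computation. The only nontrivial algebraic input is the identity $qAq = 0$ needed to verify that $(A, Av)$ lands in $\textup{T}_{(q,v)}\mathcal{E}$; everything else is bookkeeping with $P(q_1,q_2)v = q_2 v$ together with $q^2 = q$ and $qA + Aq = A$.
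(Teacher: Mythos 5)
Your computation is correct and is exactly the ``simple computation'' the paper alludes to but does not write out: differentiating $q'\mapsto v\,P(q,q')=q'v$ along a curve with velocity $A$ gives $(A,Av)$, matching \cref{split}, and the verification $qAv=qAqv=0$ (from $qA+Aq=A$, $q^2=q$, $qv=v$) that this lies in $\textup{T}_{(q,v)}\mathcal{E}$ is the only algebraic input. Your mirror computation in the first factor is also sound, so the proposal fully establishes the proposition by the same route the paper intends.
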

\begin{lemma}
Using the induced connection, $P$ is polarized along $(-J,J), (-K,K).$\footnote{That is, $P$ is polarized with respect to the totally complex distribution induced by $(-J,J)$ and the real distribution given by the $+1$-eigenbundle of $(-K,K).$}
\end{lemma}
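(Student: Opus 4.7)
The approach is a direct computation using the explicit formulas for $P$ and the connection on $\mathcal{E}$. First I would derive expressions for the two partial covariant derivatives of $P$ from the connection formula $\nabla_A v = dv(A) - Av$ provided by the splitting of \cref{taut}. Fixing $v \in \mathcal{E}_{q_1}$ and extending it to a section of $\pi_1^*\mathcal{E}$ parallel in direction $X_1$ (so that $\dot v = X_1 v$), the fact that $Pv = q_2 v$ with $q_2$ held fixed yields, via the Leibniz rule, $(\nabla^{(1)}_{X_1} P) v = q_2 X_1 v$. In the second factor, I would view $v \in \mathcal{E}_{q_1}$ as constant and apply the same connection formula to $q_2 v$, getting $\nabla_{X_2}(q_2 v) = X_2 v - X_2 q_2 v = q_2 X_2 v$, where the last equality uses the tangent identity $q_2 X_2 + X_2 q_2 = X_2$. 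So $(\nabla^{(2)}_{X_2} P) v = q_2 X_2 v$.

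With these formulas, the $(-J, J)$ polarization is checked by substituting $JX = i[X, q]$ and simplifying using two pointwise identities: $q_i v = v$ for $v \in \mathcal{E}_{q_i}$, and $q_i X_i q_i = 0$ for any tangent $X_i$ (obtained by multiplying the tangent relation by $q_i$). In the first factor, $(\nabla^{(1)}_{JX_1} P) v = i q_2 (X_1 q_1 - q_1 X_1) v$; the identity $q_1 v = v$ collapses $X_1 q_1 v$ to $X_1 v$, and $q_1 X_1 q_1 = 0$ applied to $v = q_1 v$ collapses $q_1 X_1 v$ to zero, so the result is a pure multiple of $(\nabla^{(1)}_{X_1} P) v$ by $i$. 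The second-factor computation is analogous and produces the opposite sign via $q_2 X_2 q_2 = 0$. Together these two sign-opposite relations express that $\nabla P$ is annihilated by the totally complex distribution of the product complex structure $(-J, J)$.

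For the real $(-K, K)$ polarization I would use $KX = [q, X]$ and observe that for $(X_1, X_2)$ in the relevant $+1$-eigenbundle the combined eigenvalue and tangency relations force, in the respective factor, either $X_1 q_1 = 0$ (whence $X_1 v = X_1 q_1 v = 0$ for $v \in \mathcal{E}_{q_1}$, so that $q_2 X_1 v = 0$) or $q_2 X_2 = 0$ (so that $q_2 X_2 v = 0$ outright). Consequently $\nabla P$ vanishes on each summand of this real distribution separately, and hence along the whole distribution.

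The main obstacle I anticipate is sign and labelling bookkeeping — keeping straight which eigenbundle of $K$ is horizontal versus vertical in each factor, and which sign convention for $J$ corresponds to ``polarized'' versus ``anti-polarized'' in the induced complex structure on the product. The substantive content is entirely algebraic and rests on only three identities: $qv = v$, $qXq = 0$, and $qX + Xq = X$.
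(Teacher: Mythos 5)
The paper gives no proof here (it merely notes that this and the preceding proposition are ``simple computations''), and your direct computation is exactly the intended argument: the formulas $(\nabla^{(1)}_{X_1}P)v=q_2X_1v$ and $(\nabla^{(2)}_{X_2}P)v=q_2X_2v$ are correct, and the identities $q_iv=v$, $q_iX_iq_i=0$, $q_iX_i+X_iq_i=X_i$ then give $\nabla^{(1)}_{JX_1}P=i\nabla^{(1)}_{X_1}P$ and $\nabla^{(2)}_{JX_2}P=-i\nabla^{(2)}_{X_2}P$, hence $\nabla_{(-J,J)Z}P=-i\nabla_{Z}P$, which is the $(-J,J)$ polarization. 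On the bookkeeping point you flagged: with the paper's conventions ($K_qA=[q,A]$, so $KA=A\iff Aq=0$ and $KA=-A\iff qA=0$) the distribution on which $\nabla P$ vanishes is $\{X_1q_1=0,\;q_2X_2=0\}$, which is the $+1$-eigenbundle of $(K,-K)$ --- equivalently the $-1$-eigenbundle of $(-K,K)$ --- so your computation is right and the eigenbundle label in the statement's footnote appears to carry an extra sign.
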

Fixing any point $v_q\in \mathcal{E}$ gives us a section of $\mathcal{E},$ given by $v_qP(q,\cdot).$ Due to the previous lemma:
\begin{corollary}\label{polall}
For any $v_{q_0}\in\mathcal{E},$ the section $q\mapsto v_{q_0}P(q_0,q)$ is polarized with respect to $I, J, K.$
\end{corollary}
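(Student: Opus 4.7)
The plan is to realize $\sigma(q) := v_{q_0}P(q_0,q)$ as the pullback of $P$ along the embedding
\begin{equation*}
\iota_{q_0}: \textup{T}^*\mathbf{G}\mathcal{H} \hookrightarrow \textup{T}^*\mathbf{G}\mathcal{H}\times\textup{T}^*\mathbf{G}\mathcal{H}, \qquad q\mapsto (q_0,q),
\end{equation*}
contracted against the fixed element $v_{q_0}\in \mathcal{E}_{q_0}$, and then transfer each polarization property of $P$ from the preceding lemma (together with the $I$-holomorphicity of $P$ from \cref{section}) to the slice. Since $d\iota_{q_0}(X)=(0,X)$ and $v_{q_0}$ has no $q$-dependence, the first step is to establish the identity $\nabla_X\sigma = v_{q_0}\cdot\nabla_{(0,X)}P$, where $\nabla$ on the left is the connection of \cref{taut} on $\mathcal{E}$ and $\nabla_{(0,X)}$ on the right is the induced tensor-product connection on $\mathcal{E}^*\boxtimes\mathcal{E}$.

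With this identification in hand, each case is obtained by unpacking how the relevant distribution restricts along $\iota_{q_0}$. For $I$, the product complex structure $(I,I)$ restricts to $I$ on the slice, so $I$-holomorphicity of $P$ yields $I$-holomorphicity of $\sigma$. For $J$, the complex structure $(-J,J)$ sends $(0,X)$ to $(0,JX)$, so its $(0,1)$-distribution pulls back along $\iota_{q_0}$ to the $(0,1)$-distribution of $J$; the $(-J,J)$-polarization of $P$ then gives $J$-polarization of $\sigma$. For $K$, the $+1$-eigencondition $(-K,K)(0,X)=(0,X)$ reduces to $KX=X$, so the $(-K,K)$-polarization of $P$ descends to polarization of $\sigma$ along the $+1$-eigenbundle of $K$.

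The only step requiring care, and the main (minor) obstacle, is the connection-matching identity $\nabla_X\sigma = v_{q_0}\cdot\nabla_{(0,X)}P$ asserted above. The plan for this is to use the definition of the tensor-product connection on $\mathcal{E}^*\boxtimes\mathcal{E}=\pi_0^*\mathcal{E}^*\otimes\pi_1^*\mathcal{E}$: since $d\pi_0(0,X)=0$, the pullback connection on $\pi_0^*\mathcal{E}^*$ contributes nothing in the $(0,X)$-direction, so differentiation of $P$ along $(0,X)$ reduces entirely to the pullback connection on $\pi_1^*\mathcal{E}$, which under $\pi_1\circ\iota_{q_0}=\mathrm{id}$ coincides with the connection of \cref{taut} on $\mathcal{E}$. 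Pairing with the constant vector $v_{q_0}$ is tensor-linear and commutes with $\nabla$, so the identification is complete and the corollary follows immediately from the preceding lemma.
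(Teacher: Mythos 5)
Your proposal is correct and follows the same route as the paper: the paper derives the corollary directly from the preceding lemma (polarization of $P$ along $(-J,J)$ and $(-K,K)$, together with the $I$-holomorphicity of $P$ from \cref{section}) by restricting to the slice $\{q_0\}\times\textup{T}^*\mathbf{G}\mathcal{H}$ and contracting with the fixed vector $v_{q_0}$, exactly as you do. Your write-up merely makes explicit the connection-matching step that the paper leaves implicit.
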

\begin{remark}
    In fact, being polarized with respect to any two of $I,J,K$ implies being polarized with respect to the third, so this gives a basic invariance of polarization result. 
    \end{remark}
We now define the 3-point function:
\begin{definition}
We have an $I$–holomorphic function
\begin{equation}
    \Delta:\textup{T}^*\mathbf{G}(\mathcal{H})^3\to\mathbb{C}\;,\;\;\Delta(q_1,q_2,q_3)=\textup{Tr}(q_3q_2q_1)\;.
\end{equation}
\end{definition}
For the following, we note that $P(q_1,q_2)P(q_2,q_3)P(q_3,q_1)=q_1q_3q_2$ is an endomorphism of $\mathcal{E}_{q_1}.$
\begin{lemma}\label{propp}
$P$ has the following properties:
\begin{enumerate}
    \item $P$ equals the identity on the diagonal.
    \item $\textup{Tr}(q(q_1,q_2)P(q_2,q_3)P(q_3,q_1))=\Delta(q_1,q_2,q_3).$
    \item The trace of the curvature of $\nabla$ is $\Omega.$\footnote{Up until now we hadn't shown that $\Omega$ is closed, but this proves it.}
\end{enumerate}
\end{lemma}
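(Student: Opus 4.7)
My plan is to treat the three items in order, with only Part 3 requiring any real work; Parts 1 and 2 are direct computations with \cref{section}. For Part 1, on $v \in \mathcal{E}_q = q(\mathcal{H})$ the defining relation gives $vP(q,q) = qv = v$ since $qv = v$, so $P(q,q) = \mathrm{id}_{\mathcal{E}_q}$. For Part 2, I iterate the definition: for $v \in q_1\mathcal{H}$, successive application of $P(q_1,q_2)$, $P(q_2,q_3)$, and $P(q_3,q_1)$ sends $v \mapsto q_2v \mapsto q_3q_2v \mapsto q_1q_3q_2v$, so the composition acts on $\mathcal{E}_{q_1}$ as left-multiplication by $q_1q_3q_2$. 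Its (normalized) fiberwise trace equals $\textup{Tr}(q_1q_3q_2q_1)$ — the operator $q_1q_3q_2q_1$ preserves $q_1\mathcal{H}$ and vanishes on $\ker q_1$ — and cyclicity combined with $q_1^2 = q_1$ gives $\textup{Tr}(q_3q_2q_1) = \Delta(q_1,q_2,q_3)$.

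For Part 3, I first read off from the splittings in \cref{taut} an explicit formula for the connection. Viewing a section of $\mathcal{E}$ as an $\mathcal{H}$-valued function $\psi$ with $q\psi(q)=\psi(q)$, the horizontal lift $A \mapsto (A, Av)$ together with the vertical projection $(A,w)\mapsto w-Av$ identifies
\begin{equation}
\nabla_A\psi(q) = D_A\psi(q) - A\cdot\psi(q),
\end{equation}
where $D$ is the ambient derivative in $\mathcal{H}$ and $A \in \textup{T}_q\textup{T}^*\mathbf{G}\mathcal{H} \subset B\mathcal{H}$ acts on $\psi(q)$ by operator multiplication. Using $qAq = 0$ and $q\psi = \psi$ one checks $\nabla_A\psi \in q\mathcal{H}$; by \cref{psplit} this is the connection induced by $P$.

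To compute the curvature at $q_0$ on $v_0 \in \mathcal{E}_{q_0}$, I use the distinguished section $\psi_{v_0}(q) := qv_0 = v_0 P(q_0,q)$, which is parallel at $q_0$: indeed $(\nabla_B\psi_{v_0})(q) = B(q)(1-q)v_0$, vanishing at $q_0$ because $q_0 v_0 = v_0$. Differentiating once more in direction $A$, the term $(D_AB)(q_0)(1-q_0)v_0$ vanishes for the same reason, while $D_A$ acting on $(1-q)$ produces $-Av_0$, giving $(\nabla_A\nabla_B\psi_{v_0})(q_0) = -BAv_0$. Antisymmetrizing and noting $(\nabla_{[A,B]}\psi_{v_0})(q_0) = 0$ yields
\begin{equation}
R(A,B)v_0 = [A,B]v_0,
\end{equation}
i.e., the curvature endomorphism of $\mathcal{E}_{q_0}$ is the operator commutator of $A,B \in B\mathcal{H}$ restricted to $q_0\mathcal{H}$.

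Taking the fiberwise trace, $q_0[A,B]$ preserves $q_0\mathcal{H}$ and vanishes on $\ker q_0$ (again using $qAq=0$), so $\textup{tr}_{\mathcal{E}_{q_0}}(R(A,B)) = \textup{Tr}(q_0[A,B])$, which reproduces $\Omega_{q_0}(A,B) = i\textup{Tr}(q_0[A,B])$ up to the factor of $i$ belonging to the holomorphic prequantum convention. Closure of $\Omega$ — previously unestablished — then falls out as an automatic corollary of the Bianchi identity for $\textup{tr}(R)$. The main obstacle is purely bookkeeping: translating between the geometric splittings and the clean algebraic formula $\nabla_A\psi = D_A\psi - A\psi$, and tracking the dual role of $A$ as both a tangent vector and an operator on the fiber via the embedding $\textup{T}^*\mathbf{G}\mathcal{H} \subset B\mathcal{H}$, which is precisely what makes the curvature collapse to an operator commutator. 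Alternatively, Part 3 can be derived from Part 2 by Taylor-expanding $\Delta(q_0, q_0 + \epsilon A, q_0 + \epsilon B)$ to order $\epsilon^2$ and reading off its antisymmetric part as the trace of the holonomy around an infinitesimal triangle.
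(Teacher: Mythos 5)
Your Parts 1 and 2 match the paper, which simply declares them immediate from the definitions; your explicit tracking of $v\mapsto q_2v\mapsto q_3q_2v\mapsto q_1q_3q_2v$ and the observation that $q_1q_3q_2q_1$ vanishes on $\ker q_1$ is exactly the content behind the paper's remark that $P(q_1,q_2)P(q_2,q_3)P(q_3,q_1)=q_1q_3q_2$ as an endomorphism of $\mathcal{E}_{q_1}$. For Part 3 you take a genuinely different route. The paper uses Proposition \ref{psplit} together with property 2 to assert that the trace of the curvature is the van Est image of $\log\Delta$, and then reads off the coefficient of $\varepsilon\varepsilon'$ in the antisymmetrized expansion of $\Delta(q+\varepsilon A,q+\varepsilon'B,q)$ — this is the ``holonomy of an infinitesimal triangle'' computation, and it is the route you only sketch in your final sentence. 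Your primary argument instead extracts the explicit formula $\nabla_A\psi=D_A\psi-A\psi$ from the splittings of Definition \ref{taut}, verifies it lands in $q\mathcal{H}$ using $qAq=0$, and computes the full curvature endomorphism $R(A,B)=[A,B]\vert_{q_0\mathcal{H}}$ via the sections $q\mapsto qv_0$, which are parallel at $q_0$. This is correct and actually yields more than the paper states (the curvature itself, not merely its trace, is the commutator), at the cost of not exercising the 3--point-function machinery the paper wants to showcase; the paper's route, conversely, leans on the unproved-in-detail claim that the van Est map of $\log\Delta$ computes $\mathrm{tr}(R)$, so your direct computation arguably closes a gap rather than opening one. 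One caveat you should make explicit rather than wave at: your trace comes out as $\mathrm{Tr}(q_0[A,B])$ while $\Omega_{q_0}(A,B)=i\,\mathrm{Tr}(q_0[A,B])$, so the statement as literally written requires the factor of $i$ that the paper inserts inside its own expansion; since the paper's conventions for where this $i$ lives (in the curvature, in the van Est map, or in $\Omega$) are not pinned down, you should state which convention for $F_\nabla$ you are using so that $\mathrm{tr}(F_\nabla)=\Omega$ on the nose. Your appeal to the Bianchi identity for the closedness of $\Omega$ matches the paper's footnote.
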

\begin{proof}
The first two properties are immediate from the definitions. \Cref{split} and property 2 show that the trace of the curvature can be computed by applying the van Est map (see appendix of \cite{Lackman1}) to $\log{\Delta}.$ Since 
\begin{equation}
    \Delta(q,q,q+\varepsilon A)=n+\mathcal{O}(\varepsilon^2)\;,
    \end{equation}
we find that $\textup{VE}\log{\Delta}(A,B)\vert_q$ is the term proportional to $\varepsilon\varepsilon'$ in 
\begin{align}
&i\big(\textup{Tr}((q+\varepsilon A)(q+\varepsilon' B)q)-\textup{Tr}((q+\varepsilon' B)(q+\varepsilon A)q)
\\& =\varepsilon\varepsilon' i\textup{Tr}(q[A,B])+\textup{higher order terms.}
\end{align}
This proves the claim.
\end{proof}
Finally, we need to prove that $P$ is an idempotent of the convolution algebra of $\mathcal{E}\to \textup{T}^*\mathbf{G}\mathcal{H}.$ This is easiest done using Schur's lemma.
\begin{lemma}\label{iden}
Let $\mathcal{H}$ be a finite dimensional Hilbert space. Then
\begin{equation}\label{constt}
\int_{\mathbf{G}_n\mathcal{H}}q\,\Omega^{\textup{top}}
\end{equation}
is a positive multiple of the identity operator on $\mathcal{H},$ where $\Omega$ is the canonical symplectic form and $q$ identifies a point in $\mathbf{G}_n\mathcal{H}$ with its corresponding orthogonal projection.
\end{lemma}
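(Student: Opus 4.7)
The plan is to use a symmetry argument via Schur's lemma. Write $A := \int_{\mathbf{G}_n\mathcal{H}} q \, \Omega^{\textup{top}}$ for the operator in question.

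First I would verify that the integrand defines a genuine, positive measure on the compact manifold $\mathbf{G}_n\mathcal{H}$. Since the zero section is Lagrangian for $\textup{Im}(\Omega)$ and K\"{a}hler for $\textup{Re}(\Omega)$ (shown in the previous subsection), the pullback of $\Omega$ to $\mathbf{G}_n\mathcal{H}$ coincides with the K\"{a}hler form $\textup{Re}(\Omega)\vert_{\mathbf{G}_n\mathcal{H}}$; hence $\Omega^{\textup{top}}\vert_{\mathbf{G}_n\mathcal{H}}$ is (a positive constant times) the K\"{a}hler volume form, in particular a smooth nonvanishing top form with the natural orientation. The integral $A$ is therefore a well-defined element of $B\mathcal{H}$.

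The next step is to invoke $U(\mathcal{H})$-equivariance. By \cref{inv}, $\Omega$ is invariant under the $\textup{GL}(\mathcal{H})$-action $q\mapsto MqM^{-1}$, so in particular the measure $\Omega^{\textup{top}}$ on $\mathbf{G}_n\mathcal{H}$ is invariant under the restricted action of $U(\mathcal{H})$, which on orthogonal projections takes the form $q\mapsto UqU^*$. A change of variables then gives
\begin{equation}
U A U^{*} = \int_{\mathbf{G}_n\mathcal{H}} (UqU^{*})\,\Omega^{\textup{top}} = \int_{\mathbf{G}_n\mathcal{H}} q\,\Omega^{\textup{top}} = A\,,
\end{equation}
for every $U\in U(\mathcal{H})$. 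Since $U(\mathcal{H})$ acts irreducibly on $\mathcal{H}$, Schur's lemma forces $A = \lambda\cdot \mathds{1}_{\mathcal{H}}$ for some $\lambda\in\mathbb{C}$.

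Finally, I would verify positivity of $\lambda$ by testing against any unit vector $v\in\mathcal{H}$:
\begin{equation}
\lambda = \langle v, A v\rangle = \int_{\mathbf{G}_n\mathcal{H}} \langle v, q v\rangle\,\Omega^{\textup{top}}\,.
\end{equation}
Each integrand $\langle v, q v\rangle \in [0,1]$ is nonnegative since $q$ is an orthogonal projection, and it is strictly positive on the nonempty open set of $n$-planes containing a nonzero component of $v$, so $\lambda > 0$. The only subtlety worth checking carefully is the compatibility of orientations in the change-of-variables step, which reduces to the fact that the $U(\mathcal{H})$-action preserves the K\"{a}hler form $\textup{Re}(\Omega)\vert_{\mathbf{G}_n\mathcal{H}}$ and hence its induced orientation; no genuine obstacle arises.
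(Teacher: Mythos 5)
Your proposal is correct and follows essentially the same route as the paper: both use $U(\mathcal{H})$-invariance of $\Omega^{\textup{top}}$ plus Schur's lemma, the only cosmetic difference being that you pin down the positive constant by testing against a unit vector while the paper takes a trace.
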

The proof is a straightforward adaptation of the standard proof of the version of this statement that is specialized to projective space (\cite{klauder4}):
\begin{proof}
In the following, we use that $\Omega$ is invariant under $U(\mathcal{H})$ (\cref{inv}). Let $M\in U(\mathcal{H}).$ We have
\begin{align}
M\int_{\mathbf{G}_n\mathcal{H}}q\,\Omega^{\textup{top}}=M\int_{\mathbf{G}_n\mathcal{H}}M^{*}qM\,\Omega^{\textup{top}}=\int_{\mathbf{G}_n\mathcal{H}}q\,\Omega^{\textup{top}}M\;.
\end{align}
Therefore,
\begin{equation}
\int_{\mathbf{G}_n\mathcal{H}}q\,\Omega^{\textup{top}}
\end{equation}
commutes with the action of an irreducible representation of $\mathcal{H},$ and by Schur's lemma it must be a constant multiple of the identity. Taking traces, we find that 
\begin{equation}
\int_{\mathbf{G}_n\mathcal{H}}q\,\Omega^{\textup{top}}=\frac{n}{\textup{dim}\,\mathcal{H}}\textup{Vol}_{\Omega}(\mathbf{G}_n\mathcal{H})\mathds{1}_{\mathcal{H}}>0\;.
\end{equation}
\end{proof} 
\begin{corollary}\label{corid}
$P$ is an idempotent of the convolution algebra of $\mathcal{E}\to \textup{T}^*\mathbf{G}\mathcal{H},$ with respect to the measure 
\begin{equation}\label{meas}
   \frac{\textup{dim}\,\mathcal{H}}{n\textup{Vol}_{\Omega}(\mathbf{G}_n\mathcal{H})}\Omega^{\textup{top}}
\end{equation}
and the cycle given by the zero section.
\end{corollary}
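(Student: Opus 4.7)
The plan is to unpack the definition of the convolution product on sections of $\mathcal{E}^*\boxtimes\mathcal{E}$ with respect to the zero-section cycle, reduce the identity $P\ast P = P$ to a statement about the average of orthogonal projections over $\mathbf{G}_n\mathcal{H}$, and then invoke \cref{iden} with the normalization constant built into the measure \cref{meas}.

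First, I would fix $q_1, q_3 \in \textup{T}^*\mathbf{G}\mathcal{H}$ and an arbitrary fiber vector $v\in \mathcal{E}_{q_1} = q_1(\mathcal{H})$, and compute the action of $P(q_1,q_2)P(q_2,q_3)$ on $v$ directly from \cref{section}. Since $P(q_1,q_2)$ sends $v\in q_1(\mathcal{H})$ to $q_2v \in q_2(\mathcal{H})$, applying $P(q_2,q_3)$ next yields $q_3q_2v\in q_3(\mathcal{H})$. Thus the convolution evaluated at $v$ is
\begin{equation}
\bigl(P\ast P\bigr)(q_1,q_3)\,v \;=\; \int_{q_2\in \mathbf{G}_n\mathcal{H}} q_3 q_2 v \;\frac{\textup{dim}\,\mathcal{H}}{n\,\textup{Vol}_{\Omega}(\mathbf{G}_n\mathcal{H})}\,\Omega^{\textup{top}}(q_2)\;.
\end{equation}

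Next, because $q_3$ is a bounded operator independent of $q_2$ and the integrand is valued in the finite dimensional space $B\mathcal{H}$ acting on the fixed vector $v$, I would pull $q_3$ out to the left and $v$ out to the right of the integral, giving
\begin{equation}
\bigl(P\ast P\bigr)(q_1,q_3)\,v \;=\; q_3 \left(\frac{\textup{dim}\,\mathcal{H}}{n\,\textup{Vol}_{\Omega}(\mathbf{G}_n\mathcal{H})}\int_{\mathbf{G}_n\mathcal{H}} q_2\,\Omega^{\textup{top}}\right) v\;.
\end{equation}
By \cref{iden} the bracketed operator is exactly $\mathds{1}_{\mathcal{H}}$ — this is precisely why the normalization in \cref{meas} was chosen. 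Hence $\bigl(P\ast P\bigr)(q_1,q_3)\,v = q_3 v = P(q_1,q_3)\,v$ by \cref{section}, and since $v\in q_1(\mathcal{H})$ was arbitrary, $P\ast P = P$ as sections of $\mathcal{E}^*\boxtimes\mathcal{E}$.

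There is essentially no hard step here: the argument is a one-line calculation once one unfolds the fiberwise formulas in \cref{section}, and the only nontrivial input, namely that averaging orthogonal projections over the Grassmannian against $\Omega^{\textup{top}}$ yields a multiple of the identity, has already been handled by Schur's lemma in \cref{iden}. If one wanted to be careful, the only point worth verifying is that the integrand $q_2\mapsto q_3 q_2 v$ takes values in the fixed finite-dimensional space $q_3(\mathcal{H})$ so that swapping the integral with the action of $q_3$ and evaluation at $v$ requires no additional justification beyond linearity.
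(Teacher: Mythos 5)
Your proof is correct and follows essentially the same route as the paper: unfold the convolution using $v\,P(q_1,q_2)P(q_2,q_3)=q_3q_2v$, pull $q_3$ and $v$ out of the integral, and apply \cref{iden} with the normalization constant of \cref{meas}. The paper's own proof is just a more terse version of this same calculation.
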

\begin{proof}
 We need to show that 
 \begin{equation}
     \frac{\textup{dim}\,\mathcal{H}}{n\textup{Vol}_{\Omega}(\mathbf{G}_n\mathcal{H})}\int_{\mathbf{G}_n\mathcal{H}}P(q_1,q)P(q,q_2)\,\Omega^{\textup{top}}(q)=P(q_1,q_2)\;.
 \end{equation}
 Since $P(q_1,q)P(q,q_2)=q_2q,$ this follows immediately from \cref{iden}.
\end{proof}
As a result:
\begin{corollary}\label{formal}\footnote{This is formal.}
    $P$ analytically continues the path integral
\begin{equation}
P(x,y)=\int_{\gamma(0)=x}^{\gamma(1)=y}\mathcal{P}(\gamma)\,\mathcal{D}\gamma\;,
\end{equation}
where the integral is over all paths $\gamma$ in $\mathbf{G}_n\mathcal{H}$ beginning and ending at $x,y,$ and $\mathcal{P}(\gamma)$ denotes parallel transport over $\gamma.$
\end{corollary}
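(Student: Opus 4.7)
The strategy is to extract the path integral representation from the idempotent identity of \cref{corid} by Trotter-style subdivision, and then use the $I$–holomorphicity of $P$ to get the analytic continuation essentially for free. Write $c := \textup{dim}\,\mathcal{H}/(n\textup{Vol}_{\Omega}(\mathbf{G}_n\mathcal{H}))$ for the normalizing constant of \cref{meas}.

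First I would iterate the convolution identity of \cref{corid}. For $x,y\in\mathbf{G}_n\mathcal{H}$, applying the idempotent identity $N-1$ times yields the exact formula
\begin{equation}
P(x,y)=c^{N-1}\int_{\mathbf{G}_n\mathcal{H}^{N-1}}P(x,q_1)P(q_1,q_2)\cdots P(q_{N-1},y)\prod_{i=1}^{N-1}\Omega^{\textup{top}}(q_i)\,.
\end{equation}
This is already a discrete path integral: one sums kernels along all broken sequences $x=q_0,q_1,\dots,q_{N-1},q_N=y$ in the Grassmannian.

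Next I would identify each factor $P(q_i,q_{i+1})$ with infinitesimal parallel transport in the $N\to\infty$ limit. By \cref{psplit}, $P$ equals the identity on the diagonal, and its first-order expansion at the diagonal is governed by the Hermitian connection $\nabla$ on $\mathcal{E}$. Thus for $q_{i+1}$ close to $q_i$,
\begin{equation}
P(q_i,q_{i+1})=\mathds{1}-\nabla_{\Delta q_i}+\mathcal{O}(|\Delta q_i|^2)\simeq\mathcal{P}(\gamma_i)\,,
\end{equation}
where $\gamma_i$ is any short curve from $q_i$ to $q_{i+1}$. Concatenating, the product $P(x,q_1)\cdots P(q_{N-1},y)$ is approximated by the total parallel transport $\mathcal{P}(\gamma)$ along the broken path through $x,q_1,\dots,q_{N-1},y$. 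Formally letting $N\to\infty$, the sum over intermediate points becomes the path integral $\int_{\gamma(0)=x}^{\gamma(1)=y}\mathcal{P}(\gamma)\,\mathcal{D}\gamma$, with $\mathcal{D}\gamma$ defined as the formal limit of the normalized products of $\Omega^{\textup{top}}$.

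Finally, the analytic continuation is immediate: the zero section $\mathbf{G}_n\mathcal{H}\hookrightarrow\textup{T}^*\mathbf{G}_n\mathcal{H}$ is a totally real submanifold with respect to $I$, so $\mathbf{G}_n\mathcal{H}\times\mathbf{G}_n\mathcal{H}$ is totally real and half-dimensional inside $(\textup{T}^*\mathbf{G}_n\mathcal{H})^2$. Since $P$ is $I$–holomorphic and agrees with the (formally defined) path integral on this totally real locus, it is the unique holomorphic extension of the right-hand side. The main obstacle, and the reason the statement is flagged as formal, is that the continuum limit is not well-defined: the normalization $c^{N-1}$ diverges and no rigorous measure $\mathcal{D}\gamma$ is available. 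The honest mathematical content is therefore the discrete idempotent identity of \cref{corid} together with the holomorphic extension principle; the path-integral formula is a suggestive interpretation of this data in the spirit of the coherent state path integral.
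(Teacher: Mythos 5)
Your proposal is correct and follows essentially the same route as the paper: iterate the idempotent identity of \cref{corid} to get the discrete time-sliced formula, use \cref{psplit} to identify products of $P$ along a fine subdivision with parallel transport, and pass to the formal continuum limit. Your explicit remarks on the totally real zero section justifying uniqueness of the holomorphic extension, and on where exactly the argument is only formal, are accurate elaborations of points the paper leaves implicit.
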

\begin{proof}
Let $x,y\in \mathbf{G}\mathcal{H}$ and let $d\mu$ denote \ref{meas}. Using the fact that $P$ is an idempotent and iterating, we have
\begin{equation}\label{prop}
P(x,y)=\int_{\mathbf{G}\mathcal{H}^{n-1}} \prod_{k=0}^{n-1} \mathcal{P}(x_k,x_{k+1})\,d\mu(x_1)\,\cdots\,d\mu(x_{n-1})\;.
  \end{equation}
On the other hand, let $\gamma:[0,1]\to M$ be a $C^1$-path and let $0=t_0<\cdots<t_n=1.$ Due to \cref{psplit}, we have
\begin{equation}\label{conv}
    \prod_{k=0}^{n-1} P(\gamma_{t_k},\gamma_{t_{k+1}})\xrightarrow[]{\Delta t_i\to 0} \mathcal{P}(\gamma)\;.
\end{equation}
Therefore, taking $n\to\infty$ in (\ref{prop}) gives
\begin{equation}
P(x,y)=\int_{\gamma_0=x}^{\gamma_1=y}\mathcal{P}(\gamma)\,\mathcal{D}\gamma\;.
\end{equation} 
\end{proof} 
\subsubsection{The Hilbert Space and Relation to Geometric Quantization}
We will define an inner product on $I$–holomorphic sections and show that the Hilbert space determined by $P$ is unitarily equivalent to $\mathcal{H}.$ At the end of this subsection we will discuss the compatibility of the representation of $B\mathcal{H}$ on $\mathcal{H}$ with the Kostant–Souriau quantization map.
\begin{definition}
Let $\psi_1,\psi_2$ be $I$–holomorphic sections. We have an inner product given by
\begin{equation}
    \langle \psi_1,\psi_2\rangle=\int_{\mathbf{G}\mathcal{H}}\langle \Psi_1,\Psi_2\rangle_q\,\Omega^{\textup{top}}\;.
\end{equation}
$P$ determines a projection operator $\hat{P},$ ie.
\begin{equation}
    (\hat{P}\psi)(q)=\int_{\mathbf{G}\mathcal{H}}\psi(q')P(q',q)\,\Omega^{\textup{top}}(q')\;,
\end{equation}
and we define the quantum Hilbert space $\mathcal{H}_P$ to be the image of this operator.
\end{definition}
This inner product is non–degenerate because the zero section is a totally real submanifold, and therefore the restriction map of $I$–homorphic sections is injective. This is also the reason that if $\psi\vert_{\mathbf{G}\mathcal{H}}\equiv\hat{P}\psi\vert_{\mathbf{G}\mathcal{H}},$ then $\psi\equiv \hat{P}\psi.$   
\begin{proposition}
$\mathcal{H}_P$ exactly consists of sections of $\mathcal{E}\to \textup{T}^*\mathbf{G}\mathcal{H}$ which are simultaneously polarized with respect to $I,J,K.$
\end{proposition}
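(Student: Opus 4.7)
The plan is to prove both inclusions of the claimed set equality.

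First, I would establish $\mathcal H_P\subseteq\{I,J,K\text{-polarized}\}$ using \cref{polall}. Since $\psi(q')\in q'(\mathcal H)$ and $v_{q'}P(q',q)=qv_{q'}$, we rewrite
\begin{equation*}
    \hat P\psi(q) = \int_{\mathbf G\mathcal H}\psi(q')P(q',q)\,\Omega^{\textup{top}}(q') = q\!\int_{\mathbf G\mathcal H}\psi(q')\,\Omega^{\textup{top}}(q'),
\end{equation*}
exhibiting $\hat P\psi$ as a superposition of sections $q\mapsto v_{q_0}P(q_0,q)$, each polarized with respect to $I,J,K$ by \cref{polall}. Since the polarization conditions are homogeneous linear PDEs (namely, covariant derivatives vanishing along specified distributions), they are preserved under such integrals, so $\hat P\psi$ is polarized. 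As a byproduct this identifies $\mathcal H_P=\{\psi_v:v\in\mathcal H\}$ where $\psi_v(q):=qv$, and $v\mapsto\psi_v$ is a linear bijection $\mathcal H\to\mathcal H_P$: injectivity is obvious, and surjectivity follows from $\hat P\psi_v=\psi_v$ given by \cref{iden}.

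For the reverse inclusion, let $\psi$ be $I,J,K$-polarized and set $\eta:=\psi-\hat P\psi$. By the first inclusion $\eta$ is polarized, and by idempotency of $\hat P$ we have $\hat P\eta=0$, i.e.\ $\int \eta\,\Omega^{\textup{top}}=0$. I want to conclude $\eta\equiv 0$. Since $\eta$ is $I$-holomorphic and $\mathbf G\mathcal H\hookrightarrow\textup T^*\mathbf G\mathcal H$ is totally real (as stated immediately before the proposition), it suffices to show $\eta|_{\mathbf G\mathcal H}\equiv 0$. The three polarization conditions, together with the remark after \cref{polall} that any two imply the third, force $\nabla\eta$ to vanish on three of the four simultaneous $\pm i$-eigenspaces of $I,J$ in the complexified tangent bundle of $\textup T^*\mathbf G\mathcal H$. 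This rigidity pins $\eta|_{\mathbf G\mathcal H}$ down to the form $q\mapsto qw$ for some $w\in\mathcal H$; the vanishing-integral condition $\int qw\,\Omega^{\textup{top}}=0$ then gives $w=0$ by \cref{iden}.

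The main obstacle is the rigidity step: establishing that any $I,J,K$-polarized $\eta$ must satisfy $\eta|_{\mathbf G\mathcal H}(q)=qw$ for some $w\in\mathcal H$. One route is to unpack the $K$-polarization as the condition $d\eta(X)=X\eta(q)$ for $X$ in the relevant eigenbundle of $K$, a ``left-multiplication equivariance'' that, combined with $I$-holomorphicity, can be integrated along the leaves of the $K$-foliation to force the form $\eta(q)=qw$; the $J$-polarization is then automatic from $I$ and $K$ by the remark cited above.
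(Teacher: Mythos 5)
Your forward inclusion is correct and is essentially what the paper's one‑line proof (``the definition and \cref{polall}'') amounts to: since $\psi(q')P(q',q)=q\psi(q')$, one gets $\hat P\psi(q)=q\int_{\mathbf{G}\mathcal{H}}\psi(q')\,\Omega^{\textup{top}}(q')$, so every element of $\mathcal{H}_P$ has the form $q\mapsto qv=v_{q_0}P(q_0,q)$ and \cref{polall} applies; your identification $\mathcal{H}_P=\{\psi_v:v\in\mathcal{H}\}$ via \cref{iden} is also correct and is consistent with the unitary equivalence $\mathcal{H}\cong\mathcal{H}_P$ proved right after the proposition.

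The gap is in the reverse inclusion, and you have located it honestly but not closed it: everything hinges on the rigidity claim that an $I,J,K$‑polarized section must restrict on the zero section to $q\mapsto qw$, and the route you sketch would not deliver it. The leaves of the $+1$ eigenbundle of $K$ are the fibers of $\textup{T}^*\mathbf{G}\mathcal{H}\to\mathbf{G}\mathcal{H}$, which are contractible; integrating the $K$‑polarization along them only shows that $\eta$ is determined by $\eta\vert_{\mathbf{G}\mathcal{H}}$, and places no constraint on what that restriction can be. Your eigenspace count is right --- the three polarizations kill $\nabla\eta$ on three of the four simultaneous $(I,J)$‑blocks --- but the surviving block (the $I$‑holomorphic directions $A$ with $Aq=A$, $qA=0$) has complex dimension $n(\dim\mathcal{H}-n)>0$ at every point, so there is no local rigidity: locally the space of simultaneously polarized sections is infinite‑dimensional, like holomorphic functions of the transverse variables. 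What actually forces $\eta\vert_{\mathbf{G}\mathcal{H}}(q)=qw$ is a global fact: the zero section is compact and $J$‑complex, and the space of $J$‑polarized sections of the tautological bundle $\mathcal{E}\vert_{\mathbf{G}_n\mathcal{H}}$ is exactly $\{q\mapsto qv\}\cong\mathcal{H}$ (a Borel--Weil‑type computation). That step is the real content of the reverse inclusion and is missing from your proposal --- though, to be fair, the paper's own proof cites only ingredients that establish the inclusion $\mathcal{H}_P\subseteq\{I,J,K\text{-polarized}\}$ and leaves this same point implicit.
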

\begin{proof}
This follows from the definition and \cref{polall}.
\end{proof}
As mentioned earlier, an $I$-polarized section is $J$–polarized if and only if it is $K$–polarized, so this gives an example where two polarizations produce the same result. 
\begin{remark}
We emphasize the idempotent $P$ over the polarizations since this can exist in contexts for which polarizations don't. We view the choice of polarization as a method of computing such a $P.$
\end{remark}
The following shows that quantizing $\textup{T}^*\mathbf{G}\mathcal{H}$ using the idempotent produces the same result as simply quantizing it to $\mathcal{H}:$ 
\begin{lemma}
The map 
\begin{equation}\label{uni}
\mathcal{H}\to \mathcal{H}_P\;,\;\;\Psi\mapsto \psi_{\Psi}\,,\,\psi_{\Psi}(q)=q\Psi 
\end{equation}
is a unitary equivalence.
\end{lemma}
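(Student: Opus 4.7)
The plan is to verify three things in sequence: (i) that the map $\Psi\mapsto\psi_\Psi$ actually lands in $\mathcal{H}_P$, (ii) that it preserves the inner product, and (iii) that it is surjective. The two key algebraic inputs are the idempotency $q^2=q$ (which defines $\textup{T}^*\mathbf{G}\mathcal{H}$ as a space of projections, \cref{id}) and the defining identity $v_{q'}P(q',q)=qv_{q'}$ for $v_{q'}\in q'(\mathcal{H})$ from \cref{section}; self-adjointness $q=q^*$ on the zero section is the third ingredient.

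For (i), I would first note that $q^2=q$ forces $\psi_\Psi(q)=q\Psi\in q(\mathcal{H})$, so $\psi_\Psi$ is a section of $\mathcal{E}$. It is $I$-holomorphic because $q\mapsto q\Psi$ is $\mathbb{C}$-linear in the embedding coordinate $B\mathcal{H}$, on which $I$ acts by multiplication by $i$. The substantive part of (i) is to show $\hat{P}\psi_\Psi=\psi_\Psi$. Since $q'\Psi\in q'(\mathcal{H})$, the identity $v_{q'}P(q',q)=qv_{q'}$ lets me pull $q$ outside the integral,
\begin{equation*}
(\hat{P}\psi_\Psi)(q)=\int_{\mathbf{G}\mathcal{H}}(q'\Psi)P(q',q)\,d\mu(q')=q\Big(\int_{\mathbf{G}\mathcal{H}}q'\,d\mu(q')\Big)\Psi,
\end{equation*}
which collapses to $q\Psi=\psi_\Psi(q)$ by the overcompleteness result \cref{iden} with the normalized measure \ref{meas}.

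For (ii), on the zero section $q=q^*$ and $q^2=q$, so $\langle q\Psi,q\Phi\rangle_\mathcal{H}=\langle\Psi,q\Phi\rangle_\mathcal{H}$. Integrating over $\mathbf{G}\mathcal{H}$ and applying \cref{iden} once more yields
\begin{equation*}
\langle\psi_\Psi,\psi_\Phi\rangle=\int_{\mathbf{G}\mathcal{H}}\langle\Psi,q\Phi\rangle_\mathcal{H}\,d\mu(q)=\Big\langle\Psi,\Big(\int_{\mathbf{G}\mathcal{H}}q\,d\mu(q)\Big)\Phi\Big\rangle_\mathcal{H}=\langle\Psi,\Phi\rangle_\mathcal{H}.
\end{equation*}
For (iii), given $\psi\in\mathcal{H}_P$ so that $\hat{P}\psi=\psi$, I repeat the calculation from (i) with $\psi(q')\in q'(\mathcal{H})$ in place of $q'\Psi$: the same identity gives $\psi(q)=q\,\Psi$ on the zero section with $\Psi:=\int_{\mathbf{G}\mathcal{H}}\psi(q')\,d\mu(q')\in\mathcal{H}$. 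Thus $\psi|_{\mathbf{G}\mathcal{H}}=\psi_\Psi|_{\mathbf{G}\mathcal{H}}$, and since both sides are $I$-holomorphic and the zero section is totally real for $I$, they agree on all of $\textup{T}^*\mathbf{G}\mathcal{H}$.

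The only delicate point is bookkeeping of normalizations: one must use a single measure $d\mu$ for which $\int q\,d\mu(q)=\mathds{1}_\mathcal{H}$ both in the definition of $\hat{P}$ and in the $L^2$-inner product on sections, so that \cref{iden} gives exactly $q\Psi$ rather than a scalar multiple at each invocation. Once the normalizations are aligned, every assertion reduces to the two algebraic identities $q^2=q$ and $v_{q'}P(q',q)=qv_{q'}$, with no further analytic input needed.
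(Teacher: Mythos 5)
Your proof is correct and follows essentially the same route as the paper: the inverse you construct in step (iii), $\psi\mapsto\int_{\mathbf{G}\mathcal{H}}\psi\,d\mu$, is exactly the paper's, and both arguments rest on the identity $v_{q'}P(q',q)=qv_{q'}$, the Schur-type overcompleteness of \cref{iden}, and Hermiticity of the zero section for the inner product. The only difference is that you explicitly verify $\hat{P}\psi_\Psi=\psi_\Psi$ (well-definedness), which the paper leaves implicit.
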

\begin{proof}
This map has an inverse, given by
\begin{equation}
 \mathcal{H}_P\to \mathcal{H}\;,\;\; \psi\mapsto \int_{\mathbf{G}\mathcal{H}}\psi\,\Omega^{\textup{top}}\;,
\end{equation}
where on the right side we are using the fact that a section maps into $\mathcal{H}.$ That this is a left–inverse follows from \cref{schur}. That it is a right inverse follows from the definition of $P$ and the fact that
\begin{equation}
\psi(q)=\int_{\mathbf{G}\mathcal{H}}\psi(q')P(q',q)\,\Omega^{\textup{top}}(q')\;.
\end{equation}
To see that this map preserves the inner product, we note that the zero section consists of Hermitian projections, and therefore
\begin{align}
& \langle \Psi_1,\Psi_2\rangle=\int_{\mathbf{G}\mathcal{H}}\langle q\Psi_1,q\Psi_2\rangle\,\Omega^{\textup{top}}(q)
\\& = \int_{\mathbf{G}\mathcal{H}}\langle \Psi_1,q\Psi_2\rangle\,\Omega^{\textup{top}}(q)=\langle \Psi_1,\Psi_2\rangle\;,
\end{align}
where the last equality follows from \cref{schur}.
\end{proof}
In the case that $n=1,$ $\Psi\in\mathcal{H}$ can be identified with the physicists' abstract wave function $|\Psi\rangle,$ where for $q\in \mathbf{G}\mathcal{H},$ $\psi_{\Psi}(q):=q\Psi$ would be written as 
\begin{equation}
    \psi(q)=|q\rangle\langle q|\Psi\rangle\;.\footnote{Since physicists tend to work locally they would probably write $\psi_{\Psi}(q)=\langle q|\Psi\rangle,$ but strictly speaking this isn't a section.}
\end{equation}
\begin{proposition}
Under the map \cref{uni}, the embedding $\textup{T}^*\mathbf{G}\mathcal{H}\xhookrightarrow{}B\mathcal{H}$ corresponds to the embedding
\begin{equation}
\textup{T}^*\mathbf{G}\mathcal{H}\xhookrightarrow{}B\mathcal{H}_P\;,\;\;q\mapsto \hat{q}\,,\,(\hat{q}\psi)(q')=\psi(q)P(q,q')\;.
\end{equation}
Equivalently, this is the Riesz–dual of the sequilinear form
\begin{equation}
    (\psi_1,\psi_2)\mapsto \langle\psi_1(q),\psi_2(q)\rangle_q\;.
\end{equation}
\end{proposition}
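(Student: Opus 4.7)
The plan is to verify both assertions by direct computation, using only the defining property $v_qP(q,q')=q'v_q$ of the idempotent section (\cref{section}) and the formula $\psi_\Psi(q)=q\Psi$ for the unitary equivalence \cref{uni}. No new machinery is needed—everything reduces to unwinding definitions and invoking Schur's lemma (\cref{iden}) once.

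First I would show that the operator $q\in B\mathcal{H}$ corresponds to $\hat{q}\in B\mathcal{H}_P$. Given $\Psi\in\mathcal{H}$, its image $q\Psi$ under \cref{uni} is the section $\psi_{q\Psi}(q')=q'(q\Psi)$. On the other hand, since $\psi_\Psi(q)=q\Psi$ lies in $q(\mathcal{H})=\mathcal{E}_q$, the defining property of $P$ gives
\begin{equation*}
(\hat{q}\psi_\Psi)(q')=\psi_\Psi(q)\,P(q,q')=(q\Psi)\,P(q,q')=q'(q\Psi)=\psi_{q\Psi}(q')\,.
\end{equation*}
Hence $\hat{q}$ acts on $\mathcal{H}_P$ exactly as $q$ acts on $\mathcal{H}$, which identifies the two embeddings.

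For the Riesz-dual statement I would unpack $\langle\psi_1,\hat{q}\psi_2\rangle_{\mathcal{H}_P}$ directly. Substituting the definition of $\hat{q}$, applying the defining property $v_qP(q,q')=q'v_q$, and using that on the zero section $q'$ is Hermitian and idempotent with $\psi_1(q')\in q'\mathcal{H}$, one obtains
\begin{equation*}
\langle\psi_1,\hat{q}\psi_2\rangle_{\mathcal{H}_P}=\int_{\mathbf{G}\mathcal{H}}\langle\psi_1(q'),q'\psi_2(q)\rangle_{q'}\,\Omega^{\textup{top}}(q')=\int_{\mathbf{G}\mathcal{H}}\langle\psi_1(q'),\psi_2(q)\rangle\,\Omega^{\textup{top}}(q')\,.
\end{equation*}
The Schur-lemma integral of \cref{iden} (with the normalization built into the unitary inner product on $\mathcal{H}_P$) collapses this to $\langle\psi_1(q),\psi_2(q)\rangle_q$, which is the pointwise-evaluation sesquilinear form. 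This is exactly the Riesz-dual identification.

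The main obstacle is mild bookkeeping rather than any substantive difficulty: one must track carefully when Hermiticity $q'=(q')^*$ is used (only on the zero section, where the integration cycle lives) and recognize that the same Schur-lemma mechanism that made $\Psi\mapsto\psi_\Psi$ unitary is what converts pointwise evaluation into $\hat{q}$. Once both observations are in place the proposition is a two-line calculation, and the Riesz-dual description on the zero section extends to all of $\textup{T}^*\mathbf{G}\mathcal{H}$ by the $I$-holomorphicity of $P$.
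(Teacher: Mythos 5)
The paper states this proposition without a proof, so there is nothing to compare against route-wise; your first computation supplies exactly the verification the paper leaves implicit, and it is correct: since $\psi_\Psi(q)=q\Psi\in\mathcal{E}_q$, the defining relation $v_qP(q,q')=q'v_q$ gives $(\hat q\psi_\Psi)(q')=q'q\Psi=\psi_{q\Psi}(q')$, which identifies the two embeddings.

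The second half, however, has a genuine gap in its final step. Your chain of equalities is fine as far as it goes: using Hermiticity of the integration variable $q'$ (which lives on the zero section) and the Schur integral $\int_{\mathbf{G}\mathcal{H}}q'\,\Omega^{\textup{top}}=\mathds{1}$ with the normalization of \cref{corid}, one arrives at $\langle\psi_1,\hat q\psi_2\rangle_{\mathcal{H}_P}=\langle\Psi_1,q\Psi_2\rangle_{\mathcal{H}}$. But the claimed target $\langle\psi_1(q),\psi_2(q)\rangle_q=\langle q\Psi_1,q\Psi_2\rangle_{\mathcal{H}}=\langle\Psi_1,q^*q\,\Psi_2\rangle$ agrees with this only when $q^*q=q$, i.e.\ only when $q$ is Hermitian. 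Your proposed repair --- that the identity ``extends to all of $\textup{T}^*\mathbf{G}\mathcal{H}$ by the $I$-holomorphicity of $P$'' --- cannot work, because $q\mapsto\langle\psi_1(q),\psi_2(q)\rangle_q$ involves $q^*$ through the antilinear slot and is not $I$-holomorphic, so agreement on the totally real zero section does not propagate; indeed the two sides genuinely differ off the zero section. The correct general statement is $\langle\psi_1,\hat q\psi_2\rangle=\langle\psi_1(q^*),\psi_2(q)\rangle$ (note $\langle q^*\Psi_1,q\Psi_2\rangle=\langle\Psi_1,q^2\Psi_2\rangle=\langle\Psi_1,q\Psi_2\rangle$), so $\hat q$ is the unique holomorphic continuation of the Riesz duals of the pointwise-evaluation forms on the zero section. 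You should either restrict the Riesz-dual clause to $q\in\mathbf{G}\mathcal{H}$ or replace $\psi_1(q)$ by $\psi_1(q^*)$ in the first slot.
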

Finally, we discuss more of the relationship to geometric quantization. Recall that the Hamiltonian vector field of $\langle M\rangle$ is $X_{\langle M\rangle}(q) =-i[q,M]$ (\cref{hamil}).
\begin{proposition}(\cite{poland0})
Let $n=1.$ With respect to the map \cref{uni}, the induced map $B\mathcal{H}\to B\mathcal{H}_P$ sends $M$ to the Kostant–Souriau operator of $\langle M\rangle,$ ie.
\begin{equation}\label{ks}
\psi_{M\Psi}=i\nabla_{X_{\langle M\rangle}}\psi_{\Psi}+\langle M\rangle \psi_{\Psi}\;.
\end{equation}
\end{proposition}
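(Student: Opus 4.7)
The plan is to unwind both sides of \cref{ks} directly, using the explicit description of the connection on $\mathcal{E}$ from \cref{taut}. Since the splittings of $\textup{T}\mathcal{E}$ are given by $A\mapsto (A,Av)$ (horizontal lift) and $(A,w)\mapsto w-Av$ (vertical projection), the covariant derivative of the section $\psi_{\Psi}(q)=q\Psi$ in a tangent direction $A\in\textup{T}_q\textup{T}^*\mathbf{G}\mathcal{H}$ is computed by subtracting the horizontal lift of $A$ (at the point $(q,q\Psi)$) from the ordinary derivative of $q\mapsto q\Psi$. The derivative is $A\Psi$ and the horizontal lift is $A(q\Psi)$, so I expect
\begin{equation}
\nabla_A \psi_{\Psi}(q) = A\Psi - Aq\Psi = A(1-q)\Psi\;.
\end{equation}

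Next, I substitute the Hamiltonian vector field $A=X_{\langle M\rangle}(q)=i[M,q]$ from \cref{hamil} and expand using $q^2=q$. The identity $i[M,q](1-q) = -iqM(1-q) = -iqM + iqMq$ is a one-line calculation (the $Mq - Mq^2$ terms cancel). Multiplying by $i$ yields
\begin{equation}
i\nabla_{X_{\langle M\rangle}}\psi_{\Psi}(q)= qM\Psi - qMq\Psi\;.
\end{equation}
Comparing with the desired right-hand side of \cref{ks}, everything reduces to verifying
\begin{equation}
qMq\Psi = \langle M\rangle(q)\,q\Psi\;,
\end{equation}
since the target is $\psi_{M\Psi}(q)=qM\Psi$.

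The key step, and the only place where the hypothesis $n=1$ is used, is the rank-$1$ identity $qMq=\textup{Tr}(qM)\,q$. I would justify this by writing any rank-$1$ projection as $q=v\otimes f$ with $f(v)=1$ (so $qw=f(w)v$), whence $qMq(w)=f(w)f(Mv)v$ while $\textup{Tr}(qM)=f(Mv)$, making both sides agree. Combined with the normalization convention that rank-$1$ projections have trace $1$ (so $\langle M\rangle(q)=\textup{Tr}(qM)$ in our convention), this identity cancels the $-qMq\Psi$ term against $\langle M\rangle(q)\psi_{\Psi}(q)$ and recovers $\psi_{M\Psi}(q)$.

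I do not expect any real obstacle here: the computation is algebraically routine, and the only subtlety is the rank-$1$ trace identity, which fails for $n>1$ (the correct higher-rank analogue would replace $\textup{Tr}(qM)q$ with $qMq$ itself, reflecting that the Kostant--Souriau formula only captures the scalar content and thus only directly matches the naive quantization in the line bundle case). I would note this in a brief remark after the proof to explain why the statement is restricted to $n=1$.
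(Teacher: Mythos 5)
Your computation is correct and follows essentially the same route as the paper: compute $\nabla_A\psi_\Psi(q)=A(1-q)\Psi$ from the splitting in \cref{taut}, substitute $A=i[M,q]$, and reduce everything to the rank–one identity $qMq=\textup{Tr}(qM)\,q$. The only difference is that you supply a justification of that identity via $q=v\otimes f$ (the paper simply asserts it), which is a welcome but minor addition.
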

\begin{proof}
We will compute the right side of \ref{ks}. Let $\Psi\in \mathcal{H}.$ Then
\begin{align}
    & -i\nabla_{i[q,M]}\psi_{\Psi}=[q,M]\Psi-[q,M]q\Psi=qM\Psi-qMq\Psi\;.
\end{align}
Since $n=1,$ we have that $qMq=\textup{Tr}(qM)q=\langle M\rangle q,$ and therefore
\begin{align}
\psi_{M\Psi}(q)=-i\nabla_{i[q,M]}\psi_{\Psi}+\langle M\rangle\psi_{\Psi}(q)\;.
\end{align}
\end{proof}
This also shows that Kostant–Souriau's heuristic completely fails for $n>1.$
\subsection{Symplectic Manifolds Comptatible With Commuting Almost Complex Structures}
The symplectic manifolds $(M,\Omega)$ we are studying are analogous to hyperk\"{a}hler manifolds, but for a different kind of hypercomplex structure: there are endomorphisms $I,J,K$ of $TM$ such that 
\begin{equation}\label{tess}
    I^2=J^2=-K^2=-IJK=-1\;.
\end{equation}
That is, $I,J$ are commuting almost complex structures and $K=IJ$ is an involution. Tessarines are hypercomplex numbers $i,j,k$ that satisfy the relations \ref{tess}.
\begin{definition}\label{com}
Let $(M,\Omega)$ be a manifold with a complex symplectic form. We say that commuting, integrable almost complex structures $(I,J)$ are compatible with $\Omega$ if $\Omega$ is holomorphic symplectic with respect to $I$ and if $\Omega(JX,JY)=\Omega(X,Y).$
\end{definition}
\begin{lemma}
Assuming the previous definition, we have that 
\begin{equation}
    \Omega(KX,KY)=-\Omega(X,Y)\;.
    \end{equation}
\end{lemma}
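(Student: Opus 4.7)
The plan is to unwind the definition $K=IJ$ and apply the two compatibility hypotheses in sequence, since the statement is a direct algebraic consequence of them. First I would note that because $\Omega$ is $I$–holomorphic, it is a $(2,0)$–form with respect to $I$, and hence satisfies $\Omega(IX,IY) = -\Omega(X,Y)$ for all $X,Y$. Concretely, $I$–holomorphicity means $\Omega(IX,Y) = i\,\Omega(X,Y) = \Omega(X,IY)$, so applying $I$ to both arguments picks up two factors of $i$.

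Next I would use the fact that $I$ and $J$ commute (so $K = IJ = JI$), together with the compatibility $\Omega(JX,JY) = \Omega(X,Y)$ from \cref{com}. The computation is then
\begin{equation}
\Omega(KX, KY) = \Omega(IJX, IJY) = -\Omega(JX, JY) = -\Omega(X,Y),
\end{equation}
where the middle equality uses the $(2,0)$ consequence above (applied to the vectors $JX$ and $JY$), and the last equality uses the $J$–compatibility.

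There is essentially no obstacle here; the only subtlety is the passage from the stated $I$–holomorphicity of $\Omega$ to the identity $\Omega(IX,IY) = -\Omega(X,Y)$, which follows formally from type considerations (an $I$–holomorphic $2$–form vanishes on the $(0,2)$ and $(1,1)$ pieces, so the $i$–eigenspace of $I$ on complexified tangent vectors pairs trivially with itself under $I$–action, forcing the sign flip on real vectors). If one prefers, the same identity follows from polarizing $\Omega(X+iIX,\cdot)\equiv 0$ (the $(0,1)$ vanishing) along both arguments. The result then drops out immediately from the commutativity of $I$ and $J$.
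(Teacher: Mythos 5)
Your proof is correct and is essentially the argument the paper intends: the lemma is stated without proof precisely because it follows immediately from writing $K=IJ$, using the $(2,0)$ identity $\Omega(IX,IY)=-\Omega(X,Y)$ forced by $I$--holomorphicity, and then the hypothesis $\Omega(JX,JY)=\Omega(X,Y)$. Your remark justifying the passage from $I$--holomorphicity to the sign flip is the only point of substance, and you handle it correctly.
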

\begin{corollary}
The distributions defined by the $\pm i, \pm 1$ eigenbundles of $J,K,$ respectively, are Lagrangian.
\end{corollary}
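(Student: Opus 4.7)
The plan is to handle the two halves independently, in each case verifying two things: that the distribution is isotropic (the $\Omega$--pairing vanishes on pairs of sections of the same eigenbundle), and that it has the correct rank. Both isotropy arguments will be a single-line consequence of the hypothesis $\Omega(JX,JY)=\Omega(X,Y)$ and the previous lemma $\Omega(KX,KY)=-\Omega(X,Y)$; the rank counts will follow from the eigenbundle decompositions.

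First I would extend $\Omega$ $\mathbb{C}$--bilinearly to $TM\otimes_{\mathbb{R}}\mathbb{C}$, under which the identity $\Omega(JX,JY)=\Omega(X,Y)$ persists. Writing $L_{\pm}\subset TM\otimes\mathbb{C}$ for the $\pm i$--eigenbundles of $J$, if $X,Y\in L_{\epsilon}$ then $\Omega(JX,JY)=(\epsilon i)^{2}\Omega(X,Y)=-\Omega(X,Y)$, which combined with compatibility forces $\Omega(X,Y)=0$. For the rank count, $J^{2}=-1$ gives $TM\otimes\mathbb{C}=L_{+}\oplus L_{-}$, and complex conjugation exchanges the two summands, so they have equal complex rank, namely half of $\dim_{\mathbb{C}}(TM\otimes\mathbb{C})$. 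Each $L_{\pm}$ is then a maximal isotropic subbundle, i.e.\ Lagrangian.

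Next, write $T^{\pm}\subset TM$ for the $\pm 1$--eigenbundles of $K$ (these are genuine real subbundles, since $K^{2}=1$). For $X,Y\in T^{\epsilon}$ the previous lemma gives $\Omega(X,Y)=\Omega(KX,KY)=-\Omega(X,Y)$, so $\Omega$ vanishes on $T^{\epsilon}\times T^{\epsilon}$. The decomposition $TM=T^{+}\oplus T^{-}$ together with isotropy of both summands forces the induced pairing $\Omega\colon T^{+}\times T^{-}\to\mathbb{C}$ to be non--degenerate (otherwise non--degeneracy of $\Omega$ on all of $TM$ would fail), and this forces $\operatorname{rk}T^{+}=\operatorname{rk}T^{-}$, hence each is half--dimensional and thus Lagrangian.

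The only step that is not a one--line eigenvalue manipulation is the rank count for $K$, where non--degeneracy of $\Omega$ must be used to rule out unequal ranks for $T^{+}$ and $T^{-}$; I would expect this to be the only place in the argument that draws on something other than the algebraic identities already assembled.
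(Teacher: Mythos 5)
Your proof is correct and follows essentially the same route as the paper, whose entire argument (given for the analogous earlier proposition on the $K$--eigenbundles) is the one--line isotropy computation $\Omega(X,Y)=\pm\Omega(KX,KY)=-\Omega(X,Y)$; the rank counts you supply are omitted in the paper but are a sound completion of the argument. One small caution on wording: since $\Omega$ is of type $(2,0)$ for $I$, its $\mathbb{C}$--bilinear extension to $TM\otimes\mathbb{C}$ is degenerate (its kernel is the $-i$ eigenbundle of $I$), so the $J$--eigenbundles $L_{\pm}$ are not \emph{maximal} isotropic in $TM\otimes\mathbb{C}$; what you have correctly shown is that they are isotropic of complex rank $\tfrac12\dim_{\mathbb{R}}M$, which is the sense of ``Lagrangian polarization'' the paper intends.
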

More specifically, in the examples we've looked at there exists a submanifold which is Lagrangian with respect to the imaginary part of $\Omega$ and $J$–K\"{a}hler for its real part. 
\begin{exmp}
A trivial but significant example of a triple $(I,J,K)$ satisfying \cref{tess} is given by any complex manifold $(M,I),$ where we let $J:=\pm I$ and $K=\mp 1.$ 
\end{exmp}
\begin{exmp}\label{r4}
An example satisfying \cref{com} different from $\textup{T}^*\mathbf{G}\mathcal{H}$ is given by $\mathbb{R}^4\cong \mathbb{C}^2$ with
\begin{equation}
    \Omega=d(x_1+ix_2)\wedge d(y_1+iy_2)\;.
\end{equation}
Here, 
\begin{align}
    &\nonumber I(\partial_{x_1})=\partial_{x_2}\,,\, I(\partial_{y_1})=\partial_{y_2}\;,
    \\&\nonumber J(\partial_{x_1})=\partial_{y_1}\,,\, J(\partial_{x_2})=\partial_{y_2}\;,
    \\& K(\partial_{x_1})=\partial_{y_2}\,,\,K(\partial_{x_2})=-\partial_{y_1}\;.
    \end{align}
The submanifold $x_2=y_2=0$ is $J$–K\"{a}hler for the real part of $\Omega$ and Lagrangian for its imaginary part. On the other hand, $x_2=y_1=0$ is Lagrangian for its real part and symplectic for its imaginary part, and it comes with a Lagrangian polarization induced by $K.$ 
\\\\We have a trivial $I$–holomorphic line bundle $\mathcal{L}\to\mathbb{C}^2$ with an $I$–holomorphic idempotent section of $\mathcal{L}^*\boxtimes\mathcal{L}\to \mathbb{C}^2\times\mathbb{C}^2,$ given by
\begin{equation}
    P(z,w)=e^{-\frac{1}{4\hbar}(z\bar{z}+w\bar{w}-2i\bar{z}w)}\;,
\end{equation}
where $z=(x_1+ix_2)+i(y_1+iy_2),\,\bar{z}=(x_1+ix_2)-i(y_1+iy_2),$ and similarly for $w.$
\\\\As in the case of $\textup{T}^*\mathbf{G}\mathcal{H},$ there is also an almost complex structure $J'$ that anticommutes with $I,$ it is given by
\begin{equation}
J'(\partial_{x_1})=\partial_{y_1}\,,\,J'(\partial_{x_2})=-\partial_{y_2}\,.
\end{equation}
\end{exmp}
\section{Holomorphic Pointwise Quantization}
We give a definition of quantization of holomorphic manifolds which applies to holomorphic symplectic manifolds and extends that of Berezin quantization (\cite{ber1}) in two ways: one is that it extends the definition from real symplectic manifolds to holomorphic symplectic manifolds, and the other is that it generalizes the definition to allow the codomain to be Grassmanians (over any field) rather than just projective spaces.
\begin{definition}
Let $(Y,\Omega^{\textup{top}})$ be a complex manifold with holomorphic volume form\footnote{A holomorphic volume form is a non–vanishing holomorphic top form, and thus its degree is half the real dimension of the manifold.} and let $M\xhookrightarrow{\iota} Y$ be a submanifold for which $\iota^*\Omega^{\textup{top}}$ is a (possibly complex) volume form. We define a holomorphic quantization of $(Y,\Omega^{\textup{top}})$ to be a holomorphic map $q:Y\xhookrightarrow{} \textup{T}^*\mathbf{G}_n\mathcal{H}\subset B\mathcal{H}$\footnote{Here, $\mathcal{H}$ is a separable Hilbert space.} for which (up to rescaling $\Omega^{\textup{top}}$ by a constant)
\begin{equation}\label{def}
    \mathds{1}_{\mathcal{H}}=\int_M q\,\Omega^{\textup{top}}\;.\footnote{In practice, $q$ tends to be an embedding. We may call such submanifolds overcomplete.}
\end{equation}
In the case of a real manifold with volume form $(M,\Omega^{top}),$ we consider the same definition but only with a map $q:M\to \textup{T}^*\mathbf{G}_n\mathcal{H}.$ 
\\\\We say that the quantization is Hermitian if $q\vert_M$ maps into the zero section.  Furthermore, we call $n$ the rank of the quantization, and we say that say the quantization is pure if $n=1$ and mixed otherwise. 
\end{definition}
In this definition, the equality is interpreted weakly, ie. 
\begin{equation}
    \langle\Psi_1|\Psi_2\rangle=\int_M \langle\Psi_1|q\Psi_2\rangle\,\Omega^{\textup{top}}\;.
\end{equation}
With this definition, Hermitian quanizations map functions that are real–valued on $M$ to Hermitian operators. In this case, $q$ is an analytic continuation of a Berezin quantization.
\\\\In the context of holomorphic symplectic manifolds $(Y,\Omega),$ the idea is  $q$ is a symplectic embedding and that $\Omega^{\textup{top}}$ is the canonical holomorphic volume form. However, since quantizations should come in $\hbar$–families $q_{\hbar},$ this only needs to be approximately true. ie. 
\begin{equation}
\hbar\,q_{\hbar}^*\Omega_{\textup{T}^*\mathbf{G}\mathcal{H}}-\Omega=\mathcal{O}(\hbar)\;,
\end{equation}
This is discussed more in \cite{Lackman1}. In some cases, there will be a map $q:Y\to \textup{T}^*\mathbf{G}_n\mathcal{H}$ for which \cref{def} holds on more than one submanifold. In this case, we get quantizations of real symplectic manifolds with an isomorphism between their Hilbert spaces. This can be thought of as a version of the BKS pairing.
\begin{remark}
Equivalently, we can replace the left side of \cref{def} with an orthogonal projection — in this case, \cref{def} will hold on a subspace. This can be useful for comparing different quantization maps, eg.\ when performing a Toeplitz–like quantization there are many different compatible almost complex structures, but all of the corresponding quantization maps land in a subspace of the Hilbert space of sections of the prequantum line bundle.
\\\\This may also be useful for defining the quantization of infinite dimensional manifolds as a limit of quantizations of finite dimensional submanifolds, eg. if we want to quantize $\textup{T}^*\mathbf{G}\mathcal{H}$ for infinite dimensional $\mathcal{H},$ we can consider the canonical quantizations of an increasing sequence $\textup{T}^*\mathbf{G}\mathcal{H}_1\subset\textup{T}^*\mathbf{G}\mathcal{H}_2\subset\cdots$ for finite dimensional $\mathcal{H}_1\subset\mathcal{H}_2\subset\cdots$ with $\mathcal{H}=\cup_{i}\mathcal{H}_i.$ 
\end{remark}
The following are the generalizations of Berezin's contravariant and covariant symbols, which in \cite{klauder} are called the anti–ordered symbols and ordered symbols, respectively.
\begin{definition}
We have unit-preserving maps
\begin{equation}
    C^{\omega}(Y)\xrightarrow[]{Q} B\mathcal{H}\footnote{This map is  defined on any function in $L^p(M,\iota^*\Omega^{\textup{top}}),\,1\le p\le \infty.$ If $M$ is compact then it is defined everywhere.} \;,\;\;B\mathcal{H}\xrightarrow[]{\langle\,\rangle} C^{\omega}(Y)
\end{equation}
given by 
\begin{align}\label{quant}
    &Q_f=\int_M fq\,\Omega^{\textup{top}}\;,
    \\&\label{quant2} \langle A\rangle(x)=\textup{Tr}(q(x)A)\;.
\end{align}
\end{definition}
The two maps \ref{quant}, \ref{quant2} are dual in the sense that, when defined,
\begin{equation}\label{dual}
\textup{Tr}(AQ_f)=\int_M \langle A\rangle f\,\Omega^{\textup{top}}\;.
\end{equation}
In particular, letting $A$ and $f$ be the units we get the following: \begin{corollary}
$\textup{dim}\mathcal{H}=n\textup{Vol}\,M.$\footnote{Recall that we have normalized the trace so that in the context of $\mathbf{G}_n{\mathcal{H}},$ rank–n projections have trace equal to $1.$} 
\end{corollary}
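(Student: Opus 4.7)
The plan is to specialize the duality identity
\begin{equation*}
\textup{Tr}(AQ_f)=\int_M \langle A\rangle f\,\Omega^{\textup{top}}
\end{equation*}
to $A=\mathds{1}_{\mathcal{H}}$ and $f\equiv 1$ and then unpack both sides while carefully tracking the trace normalization.

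For the left-hand side, I would first observe that by the defining overcompleteness relation \cref{def}, $Q_1=\int_M q\,\Omega^{\textup{top}}=\mathds{1}_{\mathcal{H}}$, so $\textup{Tr}(\mathds{1}_{\mathcal{H}}\cdot Q_1)=\textup{Tr}(\mathds{1}_{\mathcal{H}})$. Because the trace is normalized so that rank--$n$ projections have trace $1$, this normalized trace of $\mathds{1}_{\mathcal{H}}$ equals $\tfrac{1}{n}\dim\mathcal{H}$.

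For the right-hand side, $\langle\mathds{1}_{\mathcal{H}}\rangle(x)=\textup{Tr}(q(x))=1$ for every $x\in M$, again using that $q(x)$ is a rank--$n$ projection and the same normalization. Hence the right-hand side reduces to $\int_M \Omega^{\textup{top}}=\textup{Vol}\,M$. Equating the two sides gives $\dim\mathcal{H}=n\,\textup{Vol}\,M$, as claimed.

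There is no real obstacle here — the statement is essentially a tautology once the duality formula and the trace convention are in hand; the only thing to be careful about is not conflating the normalized trace (in which rank--$n$ projections have trace $1$) with the usual operator trace (in which $\textup{Tr}(\mathds{1}_{\mathcal{H}})=\dim\mathcal{H}$), since mixing them would shift the identity by a factor of $n$. The duality formula \eqref{dual} itself is essentially a Fubini/cyclicity computation from the definitions of $Q$ and $\langle\cdot\rangle$, and may be invoked without further comment.
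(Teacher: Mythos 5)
Your proposal is correct and is exactly the paper's argument: the corollary is stated as an immediate consequence of the duality formula \eqref{dual} with $A=\mathds{1}_{\mathcal{H}}$ and $f\equiv 1$, which is precisely what you do. Your explicit bookkeeping of the normalized trace (so that $\textup{Tr}(\mathds{1}_{\mathcal{H}})=\tfrac{1}{n}\dim\mathcal{H}$ and $\textup{Tr}(q(x))=1$) is the only content of the deduction, and you handle it correctly.
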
 
\subsection{The Noncommutative Products}
From \cref{quant2}, such a quantization comes with a distinguished subspace of $C^{\omega}(Y)$ given by functions of the form $\langle A\rangle,$ and these inherit a noncommutative product as in \cref{nonc}. Often, there is a second, related product:
\begin{definition}\label{products}
We have two (often equal) dinstinguished subspaces of $C^{\omega}(Y).$ The first is given by the image of $\langle\cdot\rangle$ and the second is given by 
\begin{equation}
    (\textup{Ker}\,Q)^{\perp}\;,\footnote{In practice, $\langle\cdot\rangle$ tends to be injective for $n=1.$}
\end{equation}
where the inner product is that of $L^2(M,\iota^*\Omega^{\textup{top}}).$ There is a product $\star_{\langle\rangle}$ on $\langle B\mathcal{H}\rangle$ given by
\begin{equation}
    f\star_{\langle\rangle} g=\langle FG\rangle\;,
\end{equation}
where $F,G\in (\textup{Ker}\,\langle\cdot\rangle)^{\perp}$ map to $f,g,$ respectively. Here, $\perp$ is taken with respect to the Hilbert–Schmidt inner product.\footnote{$\langle B_{\textup{HS}}\mathcal{H}\rangle=\langle B\mathcal{H}\rangle$ since $qM$ has finite rank for any $M\in B\mathcal{H}$ and $qqM=qM.$}
\\\\Assuming that the image of $Q$ is closed under products, we get a product $\star_Q$ on $(\textup{Ker}\,Q)^{\perp}$ given by
\begin{equation}
    f\star_Q g= Q^{-1}(Q_fQ_g)\;,
\end{equation}
where $Q^{-1}$ is the inverse of $Q\vert_{(\textup{Ker}\,Q)^{\perp}}:(\textup{Ker}\,Q)^{\perp}\to \textup{Im}(Q).$
\end{definition}
Due to the equivalence with path integral quantization, $\star_{Q}$ is a rigorous definition of the product defined in equation 2.10 of \cite{brane}, which is a special case of the path integral approach to Kontsevich's star product via the Poisson sigma model (\cite{kont},\cite{catt}).
\begin{lemma}
If $q$ is a Hermitian quantization, then $(\textup{Ker}\,Q)^{\perp}\xrightarrow[]{Q\vert_{\perp}}(\textup{Ker}\,\langle\cdot\rangle)^{\perp}\,,\,(\textup{Ker}\,\langle\cdot\rangle)^{\perp}\xrightarrow[]{\langle\cdot\rangle_{\perp}} (\textup{Ker}\,Q)^{\perp}$ are adjoints. 
\end{lemma}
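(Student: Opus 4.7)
The plan is to reduce the statement to the single adjoint identity between $Q$ and $\langle\cdot\rangle$ on all of $L^2(M,\iota^*\Omega^{\textup{top}})$ and $B_{\textup{HS}}\mathcal{H}$, and then to invoke standard Hilbert space facts to restrict to the orthogonal complements of the kernels.

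First I would verify that, under the Hermitian hypothesis, $Q$ and $\langle\cdot\rangle$ really are Hilbert space adjoints of one another. The input is the duality formula \cref{dual}, namely $\textup{Tr}(A Q_f)=\int_M \langle A\rangle f\,\Omega^{\textup{top}}$. To convert this into an adjoint identity I need $Q_f^{*}=Q_{\bar f}$, and this is where the Hermitian hypothesis enters: $q|_M$ is Hermitian, and $\iota^*\Omega^{\textup{top}}$ is real (forced, once $q$ is Hermitian, by demanding that $Q$ send real-valued functions to Hermitian operators), so
\begin{equation*}
Q_f^{*}=\int_M \bar f\, q^{*}\,\overline{\Omega^{\textup{top}}}=\int_M \bar f\, q\,\Omega^{\textup{top}}=Q_{\bar f}.
\end{equation*}
Combining this with \cref{dual} yields
\begin{equation*}
\langle Q_f, A\rangle_{\textup{HS}}=\textup{Tr}(Q_f^{*} A)=\textup{Tr}(Q_{\bar f}A)=\int_M \bar f\,\langle A\rangle\,\Omega^{\textup{top}}=\langle f,\langle A\rangle\rangle_{L^2(M)},
\end{equation*}
which is the adjoint identity on the full spaces.

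Next I would apply the standard fact that for an adjoint pair $T,T^{*}$ between Hilbert spaces one has $\overline{\textup{Im}\,T}=(\textup{Ker}\,T^{*})^{\perp}$, and in particular $\textup{Im}\,T\subset(\textup{Ker}\,T^{*})^{\perp}$. This gives both $Q\bigl((\textup{Ker}\,Q)^{\perp}\bigr)\subset (\textup{Ker}\,\langle\cdot\rangle)^{\perp}$ and $\langle\cdot\rangle\bigl((\textup{Ker}\,\langle\cdot\rangle)^{\perp}\bigr)\subset (\textup{Ker}\,Q)^{\perp}$, so the restricted maps $Q|_{\perp}$ and $\langle\cdot\rangle_{\perp}$ land in the stated codomains. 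The adjoint identity above, now restricted to $f\in(\textup{Ker}\,Q)^{\perp}$ and $A\in(\textup{Ker}\,\langle\cdot\rangle)^{\perp}$, is exactly the claim that these two restrictions are adjoints.

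The only real obstacle is the first step: the identity $Q_f^{*}=Q_{\bar f}$ fails without the Hermitian hypothesis, since then $q^{*}\neq q$ on $M$ and the adjoint of $Q_f$ would involve $q^{*}$ inside the integral, which is not of the form $Q_g$ for any $g$. Once this step is in place, everything reduces to the tautology that an adjoint pair of bounded operators restricts to a mutually adjoint pair of maps between the orthogonal complements of the kernels.
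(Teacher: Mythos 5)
Your proposal is correct and follows essentially the same route as the paper: both proofs come down to the single adjoint identity $\langle Q_f,A\rangle_{\textup{HS}}=\langle f,\langle A\rangle\rangle_{L^2(M)}$, obtained from the trace--duality formula with the Hermitian hypothesis supplying the conjugation $\overline{\textup{Tr}(qT)}=\textup{Tr}(q T^*)$ (equivalently your $Q_f^*=Q_{\bar f}$). Your extra care in justifying that the restricted maps land in the stated orthogonal complements is a welcome addition the paper leaves implicit, but it does not change the substance of the argument.
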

\begin{proof}
This follows from a direct computation:
\begin{align}
\textup{Tr}(T^*Q_f)=\int_M \overline{\textup{Tr}(qT)}f\,\Omega^{\textup{top}}=\langle \langle T\rangle,f\rangle_{L^2}\;.
\end{align}    
\end{proof}
As a result, the first distinguished subspace of \cref{products} is dense in the second:
\begin{corollary}
If $q$ is a Hermitian quantization, then $Q_{\perp},\,\langle\cdot\rangle_{\perp}$ have dense images. In particular, if $\mathcal{H}$ is finite dimensional then they are isomorphisms and therefore the image of $Q$ is closed under products.
\end{corollary}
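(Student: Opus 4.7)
My plan is to combine the adjoint relationship just established in the previous lemma with two standard ingredients: a bounded operator between Hilbert spaces has dense image if and only if its adjoint is injective, and every injective linear map between finite–dimensional spaces of the same dimension is an isomorphism.

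First I would note that $Q_\perp$ and $\langle\cdot\rangle_\perp$ are both injective by construction, being restrictions of their parent maps to the orthogonal complements of their kernels. Combined with the adjoint pairing from the previous lemma, the Hilbert–space identity $\overline{\textup{Im}(T)} = \textup{Ker}(T^*)^\perp$ immediately yields density of the two images.

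For the finite–dimensional claim, $B\mathcal{H}$ and hence $(\textup{Ker}\,\langle\cdot\rangle)^\perp$ are finite–dimensional, so the dense injection $Q_\perp$ is automatically a bijection; by the adjoint pairing, $(\textup{Ker}\,Q)^\perp$ must then be finite–dimensional of matching dimension, and $\langle\cdot\rangle_\perp$ is also an isomorphism.

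The step I expect to require the most care is deducing that $\textup{Im}(Q)$ is closed under products. My approach is to establish the stronger statement $\textup{Im}(Q) = B\mathcal{H}$. Since $\textup{Im}(Q) = \textup{Im}(Q_\perp) = (\textup{Ker}\,\langle\cdot\rangle)^\perp$, it suffices to verify $\textup{Ker}\,\langle\cdot\rangle = \{0\}$. I would observe that this kernel is a $\textup{GL}(\mathcal{H})$–invariant subspace of $B\mathcal{H}$ under conjugation, because conjugation permutes rank–$n$ projections and preserves $\textup{Tr}(qM)$. For finite–dimensional $\mathcal{H}$ the conjugation representation decomposes as $B\mathcal{H} = \mathbb{C}\mathds{1} \oplus \mathfrak{sl}(\mathcal{H})$ with both summands irreducible, and neither summand lies in the kernel: we have $\langle\mathds{1}\rangle(q) = \textup{Tr}(q) = 1$, and a single off–diagonal matrix unit pairs nontrivially against an explicit non–Hermitian rank–$n$ projection. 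Hence $\textup{Ker}\,\langle\cdot\rangle = \{0\}$, so $\textup{Im}(Q) = B\mathcal{H}$, and closure under products is automatic.
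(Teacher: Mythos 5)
Your treatment of the first two assertions is fine and is essentially the argument the paper leaves implicit: $Q_\perp$ and $\langle\cdot\rangle_\perp$ are injective by construction, they are mutually adjoint by the preceding lemma, so $\overline{\textup{Im}(Q_\perp)}=\big(\textup{Ker}\,\langle\cdot\rangle_\perp\big)^{\perp}$ and its mirror image give density, and a dense subspace of a finite--dimensional space is the whole space, so both maps are bijections.

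The final step, however, contains a genuine gap. You argue that $\textup{Ker}\,\langle\cdot\rangle=\{0\}$ because this kernel is invariant under conjugation by $\textup{GL}(\mathcal{H})$. That invariance is not available in the setting of this corollary: here $\langle A\rangle(x)=\textup{Tr}(q(x)A)$ for a \emph{fixed} holomorphic quantization map $q:Y\to\textup{T}^*\mathbf{G}_n\mathcal{H}$, so $A\in\textup{Ker}\,\langle\cdot\rangle$ means $\textup{Tr}(q(x)A)=0$ for all $x\in Y$. Replacing $A$ by $gAg^{-1}$ turns this into $\textup{Tr}(g^{-1}q(x)gA)=0$, and $g^{-1}q(x)g$ need not lie in the image of $q$; your argument silently specializes to the one example $Y=\textup{T}^*\mathbf{G}_n\mathcal{H}$ with $q=\textup{id}$, where the image of $q$ really is conjugation--invariant. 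Moreover the intermediate conclusion $\textup{Im}(Q)=B\mathcal{H}$ is too strong in general: since $\textup{Tr}(q(x)A)=\overline{\langle A,q(x)^*\rangle}_{\textup{HS}}$, the isomorphism statement identifies $\textup{Im}(Q)=(\textup{Ker}\,\langle\cdot\rangle)^{\perp}=\overline{\textup{span}}\{q(x)^*:x\in Y\}$, which is a proper subspace of $B\mathcal{H}$ whenever the operators $q(x)$ fail to span --- the paper's own footnote to the definition of the two products concedes that $\langle\cdot\rangle$ only ``tends to'' be injective, and only for $n=1$. So what remains to be shown is precisely that the span of the $q(x)^*$ is closed under operator multiplication, and neither your argument nor the paper (which states the corollary without proof) supplies a reason for this; it is the one genuinely nontrivial claim in the statement and it needs either an additional hypothesis on $q$ or a separate argument.
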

The following result gives a family of natural examples:
\begin{lemma}\label{schur}
Let $M\subset \textup{T}^*\mathbf{G}\mathcal{H}$ be a  be a finite dimensional symplectic submanifold that is invariant under an irreducible representation of $\mathcal{H}.$ Then
\begin{equation}\label{const}
\int_{M}q\,\Omega^{\textup{top}}
\end{equation}
is a positive multiple of the identity operator, where $\Omega$ is the canonical symplectic form.
\end{lemma}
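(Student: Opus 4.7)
The plan is to mimic the proof of \cref{iden} verbatim, using $M$ in place of $\mathbf{G}_n\mathcal{H}$ and the hypothesized irreducible representation in place of $U(\mathcal{H})$. The two essential inputs are the $\textup{GL}(\mathcal{H})$--invariance of $\Omega$ (hence of $\Omega^{\textup{top}}$) established in \cref{inv}, and the fact that the action of the acting group $G\subset U(\mathcal{H})$ on $\textup{T}^*\mathbf{G}\mathcal{H}$ by conjugation $q\mapsto gqg^{-1}$ preserves $M$ by hypothesis.

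First I would fix $g\in G$ and compute
\begin{equation*}
g\Bigl(\int_M q\,\Omega^{\textup{top}}\Bigr)g^{-1}=\int_M gqg^{-1}\,\Omega^{\textup{top}}(q)=\int_M q'\,\Omega^{\textup{top}}(q')=\int_M q\,\Omega^{\textup{top}}\;,
\end{equation*}
where the middle equality is the change of variables $q'=gqg^{-1}$, which is a self--diffeomorphism of $M$ by $G$--invariance, and whose Jacobian is trivial on $\Omega^{\textup{top}}$ by \cref{inv}. Thus the operator $\int_M q\,\Omega^{\textup{top}}\in B\mathcal{H}$ commutes with every element of the irreducible representation $G\curvearrowright\mathcal{H}$, so by Schur's lemma it equals $c\cdot\mathds{1}_{\mathcal{H}}$ for some $c\in\mathbb{C}$.

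To pin down $c$ and see that it is positive, I would take traces, using the normalization convention in force (rank--$n$ projections have trace $1$):
\begin{equation*}
c\cdot \textup{dim}\,\mathcal{H}=\textup{Tr}\Bigl(\int_M q\,\Omega^{\textup{top}}\Bigr)=\int_M \textup{Tr}(q)\,\Omega^{\textup{top}}=\textup{Vol}_{\Omega}(M)\;.
\end{equation*}
The sign/positivity of $c$ is the main subtlety I would expect to require care: since $\Omega$ is a \emph{holomorphic} symplectic form, $\iota^*\Omega^{\textup{top}}$ is a priori complex--valued on a symplectic submanifold $M$. The hypothesis that $M$ is symplectic, together with its invariance under a unitary (hence $^*$--compatible) representation, should force $\iota^*\Omega^{\textup{top}}$ to agree up to a positive factor with the Liouville volume of the real symplectic form induced on $M$ (for instance, if $M$ lies in the zero section then the Kähler result of the preceding proposition applies directly, and the general $G$--invariant case reduces to a model computation at a single point). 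This is the one step where one needs to check an orientation/positivity claim rather than a formal symmetry argument; the rest is a direct application of Schur.
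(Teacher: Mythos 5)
Your proposal matches the paper's own argument, which is precisely the Schur's-lemma computation of \cref{iden} transplanted to $M$: conjugation-invariance of $\Omega^{\textup{top}}$ under the unitary representation gives commutation of $\int_M q\,\Omega^{\textup{top}}$ with an irreducible action, and taking traces fixes the constant as $n\,\textup{Vol}_{\Omega}(M)/\textup{dim}\,\mathcal{H}$. The one place you go beyond the paper is your flagged concern that $\iota^*\Omega^{\textup{top}}$ is a priori complex-valued on a general symplectic submanifold off the zero section, so positivity of $\textup{Vol}_{\Omega}(M)$ requires a separate check; the paper simply asserts positivity without addressing this, so your write-up is the more careful of the two.
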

We end this subsection with the definition of the 3–point function.
\begin{definition}
We define a holomorphic function, called the 3–point function, by
\begin{equation}
\Delta:Y^3\to\mathbb{C}\,,\;\Delta(y_1,y_2,y_3)=\textup{Tr}(q(y_3)q(y_2)q(y_1))\;.
\end{equation}
\end{definition}
\begin{remark}
One can often identify certain elements of a $C^*$–algebra with states, eg. on a symplectic manifold, a positive function which integrates to $1$ defines a state, and on a Hilbert space a positive operator with trace $1$ defines a state. If we ignore the Hermitian condition then \cref{dual} implies that $Q$ and $\langle \cdot\rangle$ preserve states. Therefore, we not only get a classical observable $\longleftrightarrow$ quantum observable correspondence, we get a classical state $\longleftrightarrow$ quantum state correspondence. 
\end{remark}
\subsection{Examples}
We can take the irreducible representation to be that of the unitary group, and we get the following class of examples:
\begin{exmp}
Let $\mathcal{H}$ be finite dimensional. Then up to scaling $\Omega$ by a positive constant, an example of a holomorphic quantization of points is given by $Y=\textup{T}^*\mathbf{G}_n\mathcal{H},$ with $M=\mathbf{G}_n\mathcal{H}$ being the zero section and $q$ being the identity map. 
\end{exmp}
\begin{exmp}
The Berezin–Toeplitz quantization of a K\"{a}hler manifold $(M,\omega,I)$ is a rank–one example in the real category. Here, $\mathcal{H}$ consists of the square–integrable K\"{a}hler polarized sections of the prequantum line bundle, and $q(x)$ is given by the Riesz–dual of the sesquilinear form
\begin{equation}
(\Psi_1,\Psi_2)\mapsto \langle \Psi_1(x),\Psi_2(x)\rangle_x\;,
\end{equation}
where $\langle \cdot,\cdot\rangle_x$ is the inner product of the fiber of the prequantum line bundle over $x.$\footnote{That is, $q(x)$ is defined by the equation $\langle \Psi_1,q(x)\Psi_2\rangle_{L^2(M)}=\langle \Psi_1(x),\Psi_2(x)\rangle_x.$} In this case, up to a positive constant the volume form is given by $x\mapsto B(x,x)\omega^n(x),$ where $B$ is the Bergman kernel (the integral kernel for the orthogonal projection onto polarized sections). In cases where there is a transitive action of a Lie group on $(M,\omega,I)$ that lifts to the prequantum line bundle with Hermitian connection, $x\mapsto B(x,x)$ is constant. This is discussed more in \cite{Lackman1}.
\end{exmp}
\begin{exmp}
Let $G$ be a compact Lie group. Let
\begin{equation}
    \pi:G\to U(\mathcal{H})
\end{equation}
be an irreducible unitary representation. Let $\mathcal{A}\subset \mathcal{H}$ be a closed subspace and let $H\subset G$ be the subgroup of elements that fix $\mathcal{A}.$ Let $M=G/H$ and let $dx$ the induced left invariant measure.\ For $n=\textup{dim}\,\mathcal{A},$ we have an injective map
\begin{equation}
    q:G/H\to \mathbf{G}_{n}\mathcal{H}\;,
\end{equation}   
where $q(g\mathcal{H})$ is the orthogonal projection onto $U(gH)\mathcal{A}.$ 
\begin{proposition}
Up to rescaling $dx$ by a positive constant,
\begin{equation}
    \mathds{1}_{\mathcal{H}}=\int_{G/H}q\,dx\;.
\end{equation}    
\end{proposition}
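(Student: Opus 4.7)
The plan is to mimic almost verbatim the Schur's-lemma argument used earlier for $\int_{\mathbf{G}_n\mathcal{H}} q\,\Omega^{\textup{top}}$ in \cref{iden} and \cref{schur}. Define the bounded operator
\begin{equation*}
T := \int_{G/H} q(x)\,dx
\end{equation*}
(well-defined since $G$ is compact so the induced measure on $G/H$ is finite). The goal is to show $T = c\,\mathds{1}_{\mathcal{H}}$ for some $c>0$ and then rescale $dx$ by $1/c$.

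First I would verify that $q:G/H \to \mathbf{G}_n\mathcal{H}$ is well-defined: since $H$ is by definition the setwise stabilizer of $\mathcal{A}$ under $\pi$, one has $\pi(gh)\mathcal{A} = \pi(g)\pi(h)\mathcal{A} = \pi(g)\mathcal{A}$, so the projection onto $\pi(gH)\mathcal{A}$ depends only on the coset. Next, since $\pi$ is unitary and the orthogonal projection onto $\pi(g')W$ equals $\pi(g')P_W\pi(g')^{*}$ for any closed subspace $W$, the map $q$ is $G$-equivariant:
\begin{equation*}
q(g'\cdot gH) \;=\; \pi(g')\,q(gH)\,\pi(g')^{*}.
\end{equation*}
Combining this with the $G$-invariance of $dx$ on $G/H$, a change of variables $x \mapsto g'\cdot x$ yields $\pi(g')\,T\,\pi(g')^{*} = T$ for every $g' \in G$, i.e.\ $T$ commutes with the entire representation.

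By Schur's lemma applied to the irreducible representation $\pi$, we conclude $T = c\,\mathds{1}_{\mathcal{H}}$ for some $c \in \mathbb{C}$. Positivity of $c$ follows immediately from the fact that each $q(x)$ is a non-negative orthogonal projection: for any nonzero $v \in \mathcal{H}$,
\begin{equation*}
\langle v, Tv\rangle \;=\; \int_{G/H} \|q(x)v\|^{2}\,dx \;\geq\; 0,
\end{equation*}
and this integral is strictly positive because the span of $\bigcup_{g\in G}\pi(g)\mathcal{A}$ is a nonzero $G$-invariant subspace of $\mathcal{H}$, hence all of $\mathcal{H}$ by irreducibility, so $q(x)v \neq 0$ for $x$ in a set of positive measure. (Alternatively, one can read off $c$ by taking the trace: with our normalization that rank-$n$ projections have trace $1$, $\mathrm{Tr}(T) = \mathrm{Vol}(G/H)$ while $\mathrm{Tr}(\mathds{1}_{\mathcal{H}}) = \dim\mathcal{H}/n$, giving $c = n\,\mathrm{Vol}(G/H)/\dim\mathcal{H} > 0$.) Rescaling $dx$ by $1/c$ produces the stated identity.

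There is essentially no genuine obstacle here; the only place one must be careful is in confirming well-definedness of $q$ on the quotient and in invoking left-invariance of the measure on $G/H$ (which requires compactness of $G$, as assumed). Everything else is a direct translation of the earlier Grassmannian computation into the general homogeneous-space setting.
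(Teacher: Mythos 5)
Your proposal is correct and follows exactly the route the paper takes: the paper's proof is a one-line appeal to Schur's lemma ``as in \cref{iden}'', which is precisely the equivariance-plus-invariant-measure argument you spell out. The additional details you supply (well-definedness of $q$ on cosets, $G$-equivariance of $q$, and the positivity/trace computation of the constant) are the same ingredients implicit in the earlier Grassmannian lemma.
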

\begin{proof}
This follows from Schur's lemma, as in \cref{iden}.
\end{proof}
\end{exmp}
\section{Holomorphic Path Integral Quantization}
In the case of real Poisson manifolds, the following definition is discussed in \cite{Lackman1}.
\begin{definition}\label{patho}
Let $\mathcal{E}\to Y$ be a holomorphic vector bundle with Hermitian metric and let $\Omega^{\textup{top}}$ be a holomorphic volume form. Let $M\xhookrightarrow{\iota} Y$ be a submanifold for which $\iota^*\Omega^{\textup{top}}$ is a (possibly complex) volume form. We say that a holomorphic section $P$ of
\begin{equation}
    \mathcal{E}^*\boxtimes \mathcal{E}\to Y\times Y\;\footnote{Recall that $ \mathcal{E}^*\boxtimes \mathcal{E}:=\pi_0^*\mathcal{E}^*\otimes \pi_1^*\mathcal{E}\to Y\times Y.$}
    \end{equation}
is an (equal–time) propagator if
\begin{enumerate}
    \item \label{nor}$P(x,x)=\mathds{1}$ for all $x\in Y\,.$
     \item\label{bounded} For all $x\in Y,\,|P(x,\cdot)|,|P(\cdot,x)|\in L^2(Y,\Omega^{\textup{top}})$\footnote{The integration is done over $M.$} 
    \item \label{conditionf}For all $x,y\in Y,$ $\int_M P(x,z)P(z,y)\,\Omega^{\textup{top}}_z=P(x,y)$ (up to scaling $\Omega$ by a constant),
    \item \label{norm2} If $x\ne y$ then for any $v_x\in\mathcal{E}_x,v_y\in\mathcal{E}_y,$ there exists $z$ such that $v_xP(x,z)\ne v_yP(y,z)\,.$
  \end{enumerate}
If $P(x,y)=P^*(y,x)$ for all $x,y\in M$ then we say that $P$ is Hermitian.
\\\\Furthermore, we say that $P$ integrates $\nabla_{P},$ where $\nabla_P$ is the connection on $\mathcal{E}$ determined by $P.$
\end{definition}
The first condition says that states are normalized; the second condition is a technical one and it implies that the Hilbert space is non–trivial. The third condition says that $P$ computes a path integral; the fourth condition is an injectivity condition. Under very mild assumptions, $P$ is the integral kernel of a bounded operator on $L^2(\mathcal{E}\to Y,\Omega^{\textup{top}}),$ in which case it defines a projection operator — if $P$ is Hermitian then this is an orthogonal projection.
\begin{remark}
Succinctly, $P$ is a normalized idempotent of the convolution algebra of a vector bundle over the pair groupoid. It generalizes to Poisson manifolds as a normalized idempotent of the  convolution algebra of a prequantum vector bundle over the symplectic groupoid.
\end{remark}
\begin{proposition}
If $P$ is Hermitian then for all $x\in Y,$ $y\mapsto |P(x,y)|^2$ is a probability densiy on $M,$ ie.
\begin{equation}
   \int_M \textup{Tr}\big(P(x,y)P^*(x,y)\big)\,\Omega_y^{\textup{top}} =1\;.\footnote{That is, we are equipping $\textup{hom}(\mathcal{E}_x,\mathcal{E}_y)$ with the Hilbert–Schmidt norm. Again, we have normalized the trace so that rank–n projections have trace equal to $1.$}
\end{equation}
\end{proposition}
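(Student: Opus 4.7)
The plan is to reduce the claim to the axioms of \cref{patho} by an elementary chain: invoke the Hermitian hypothesis to rewrite $P^*$, commute the trace past the integral, and then collapse the integral using the convolution identity.

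Fix $x \in M$. By the Hermitian hypothesis, $P^*(x,y) = P(y,x)$ for $y \in M$, so the integrand becomes $\textup{Tr}(P(x,y) P(y,x))$. The operator $P(x,y)P(y,x)$ lies in $\textup{End}(\mathcal{E}_x)$, a fiber independent of $y$, so the trace commutes with the integral:
\[
\int_M \textup{Tr}(P(x,y)P(y,x))\,\Omega_y^{\textup{top}} \;=\; \textup{Tr}\!\left( \int_M P(x,y)P(y,x)\,\Omega_y^{\textup{top}} \right),
\]
with the $L^2$ bound of condition \ref{bounded} supplying the absolute integrability needed to justify the swap.

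The remaining integral is precisely the convolution condition \ref{conditionf} with both endpoints set equal to $x$, so it equals $P(x,x)$; by the normalization condition \ref{nor} this is $\mathds{1}_{\mathcal{E}_x}$. The trace normalization adopted earlier in the paper (rank-$n$ projections are assigned trace $1$) then gives $\textup{Tr}(\mathds{1}_{\mathcal{E}_x}) = 1$, completing the argument for $x \in M$.

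The one genuinely delicate point is the Fubini-type interchange of trace and integral, but condition \ref{bounded} was engineered precisely to cover this. For $x \in Y \setminus M$ the identity $P^*(x,y) = P(y,x)$ is not part of the hypothesis, so the clean argument above settles only the case $x \in M$; obtaining the statement for all $x \in Y$ would require treating $P^*$ via analytic continuation along the totally real direction of $M \hookrightarrow Y$, and I would flag this --- rather than the core algebraic manipulation --- as the only real obstacle to extending the proof.
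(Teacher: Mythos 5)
Your argument is exactly the paper's: the paper's proof reads ``This follows from conditions 1 and 3 by letting $x=y$,'' which is precisely your chain of rewriting $P^*(x,y)=P(y,x)$ via the Hermitian hypothesis, collapsing the integral by the convolution identity at coincident endpoints, and using the trace normalization $\textup{Tr}(\mathds{1}_{\mathcal{E}_x})=1$. Your added caveat about $x\in Y\setminus M$ (where the Hermitian condition as defined only covers $x,y\in M$) is a fair observation that the paper glosses over, but it does not change the approach.
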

\begin{proof}
This follows from conditions 1 and 3 by letting $x=y.$
\end{proof}
For the following, note that $P(y_1,y_2)P(y_2,y_3)P(y_3,y_1)$ defines an endomorphism of $\mathcal{E}_{y_1}.$
\begin{definition}
We define a holomorphic function, called the $3$–point function, to be 
\begin{equation}
\Delta:Y^3\to\mathbb{C}\;,\;\Delta(y_1,y_2,y_3)=\textup{Tr}(q(y_1,y_2)P(y_2,y_3)P(y_3,y_1))\;.
\end{equation}
\end{definition}
\subsection{Equivalence of the Two Quantizations}
We take morphisms in the categories of quantizations to be isomorphisms:
\begin{definition}
The morphisms of a Hermitian, homolomorphic quantization of points are unitary equivalences of Hilbert spaces, ie.\ the morphism associated to a unitary equivalence $U$ is 
\begin{equation}
   q\mapsto UqU^*\;. 
\end{equation}
We take a morphism of Hermitian propagators to be a holomorphic vector bundle map that is a pointwise unitary equivalence, ie. the morphism associated to such a vector bundle map $f$ is 
\begin{equation}
    P\mapsto fPf^*,\, fPf^*(x,y)=f(x)P(x,y)f^*(y)\;.
\end{equation}
\end{definition}
For the special case of real symplectic manifolds with a quantization map valued in $\mathbb{P}\mathcal{H},$ the following is discussed in \cite{poland0} and expanded on in \cite{Lackman1}.
\begin{theorem}\label{equivalence}
Let $(Y,\Omega^{\textup{top}})$ be a complex manifold with a holomorphic volume form and let $M\xhookrightarrow{i}Y$ be a compact submanifold such that $\iota^*\Omega^{\textup{top}}$ is a volume form.
There is a $\Delta$–preserving equivalence of categories between:
\begin{enumerate}
    \item Hermitian Quantizations of points.
    \item Holomorphic vector bundles with a Hermitian metric and Hermitian propagator.
\end{enumerate} 
\end{theorem}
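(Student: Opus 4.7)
The plan is to construct explicit functors $F$ (from quantizations to propagators) and $G$ (the reverse) and exhibit natural isomorphisms $GF\cong\mathrm{id}$ and $FG\cong\mathrm{id}$. Starting with a Hermitian quantization $q:Y\to \mathrm{T}^*\mathbf{G}_n\mathcal{H}$, I pull back the canonical data of \cref{taut} and \cref{section}: set $\mathcal{E}_q:=q^*\mathcal{E}$ with the induced Hermitian metric, and define
\begin{equation}
P_q(x,y)v_x := q(y)v_x, \qquad v_x\in q(x)(\mathcal{H}).
\end{equation}
Condition \ref{nor} is immediate from $q(x)v_x=v_x$; \ref{bounded} follows from compactness of $M$ and continuity; \ref{conditionf} follows from applying the overcompleteness identity $\mathds{1}=\int_M q\,\Omega^{\textup{top}}$ to $v_x$ and sandwiching against $q(y)$; \ref{norm2} follows because if $q(x)\neq q(y)$ as projections in $B\mathcal{H}$ then some vector separates them; Hermiticity transports because $q|_M$ takes values in orthogonal projections.

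In the opposite direction, given $(\mathcal{E}\to Y,P)$, I mimic the construction in \cref{uni}: let $\mathcal{H}_P$ be the image of $\hat{P}$ on $L^2(M,\iota^*\mathcal{E})$, where $(\hat{P}\psi)(x)=\int_M \psi(z)P(z,x)\,\Omega^{\textup{top}}(z)$, and define $q_P(x):\mathcal{H}_P\to\mathcal{H}_P$ by $(q_P(x)\psi)(y):=\psi(x)P(x,y)$. Idempotency of $q_P(x)$ follows from condition \ref{nor}; the image of $q_P(x)$ is exactly $\{v_x P(x,\cdot):v_x\in\mathcal{E}_x\}$, which by condition \ref{norm2} is $n$-dimensional; the overcompleteness identity follows from condition \ref{conditionf} together with the fact that $\hat{P}$ restricts to the identity on $\mathcal{H}_P$. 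Holomorphy of $q_P$ comes from holomorphy of $P$ in its first argument, and Hermiticity of $q_P|_M$ from Hermiticity of $P$ on $M$.

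For the natural isomorphisms, the canonical unitary $\mathcal{H}\to\mathcal{H}_{P_q}$ given by $\Psi\mapsto(x\mapsto q(x)\Psi)$ generalizes \cref{uni}; it intertwines $q$ with $q_{P_q}$ and identifies $\mathcal{E}_q$ with $F(q_{P_q})$. Conversely, the bundle map $\mathcal{E}\to q_P^*\mathcal{E}_{\mathrm{taut}}$ sending $v_x\mapsto v_x P(x,\cdot)\in\mathrm{image}(q_P(x))$ identifies $P$ with $P_{q_P}$. The $\Delta$-preservation is a short cyclic-trace computation:
\begin{equation}
\mathrm{Tr}\bigl(P_q(y_1,y_2)P_q(y_2,y_3)P_q(y_3,y_1)\bigr)=\mathrm{Tr}\bigl(q(y_1)q(y_3)q(y_2)q(y_1)\bigr)=\mathrm{Tr}\bigl(q(y_3)q(y_2)q(y_1)\bigr).
\end{equation}

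The main obstacle I expect is the identification of the rank and holomorphic structure of $q_P$. Establishing rank exactly $n$ requires combining conditions \ref{nor} and \ref{norm2} to argue that $v_x\mapsto v_x P(x,\cdot)$ is injective from $\mathcal{E}_x$ into $\mathcal{H}_P$, and that its image is fixed by $\hat{P}$ (so lies in $\mathcal{H}_P$); this is where condition \ref{norm2} is essential, since without it the rank could drop. The holomorphic dependence of $q_P(x)$ on $x$ requires using that $P$ is holomorphic in its first argument to see that $q_P(x)\psi$ is a holomorphic family of sections, and hence that $q_P$ defines a holomorphic map into $\mathrm{T}^*\mathbf{G}_n\mathcal{H}_P\subset B\mathcal{H}_P$. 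Functoriality on morphisms is routine bookkeeping: a unitary $U:\mathcal{H}\to\mathcal{H}'$ intertwining $q$ and $q'$ induces the pointwise bundle map of $\mathcal{E}_q$ with $\mathcal{E}_{q'}$ intertwining $P_q$ and $P_{q'}$, and a vector bundle isomorphism intertwining propagators induces a unitary of the associated Hilbert spaces intertwining the quantization maps.
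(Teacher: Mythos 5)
Your proposal follows essentially the same route as the paper's proof: the forward functor pulls back the complexified tautological bundle and canonical propagator of $\textup{T}^*\mathbf{G}_n\mathcal{H}$ (so that $v_xP_q(x,y)=q(y)v_x$), the reverse functor builds $\mathcal{H}_P$ as the image of $\hat{P}$ and sets $(q_P(x)\psi)(y)=\psi(x)P(x,y)$, and the natural isomorphism is the unitary $\Psi\mapsto(x\mapsto q(x)\Psi)$, exactly as in the paper. Your verifications of conditions \ref{nor}--\ref{norm2}, of the rank via the basis $v_{x,i}P(x,\cdot)$, and of $\Delta$-preservation by the cyclic trace identity match the paper's argument (which is terser but identical in substance).
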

Compactness isn't necessary, it only makes the statement and proof a bit simpler. If we drop the Hermitian condition then the result still holds if we consider vector spaces rather than Hilbert spaces.
\begin{proof}
Given a quantization of points $q:Y\to \textup{T}^*\mathbf{G}\mathcal{H},$ we get a holomorphic vector bundle with Hermitian metric and propagator by pulling back the complexified tautological bundle and propagator of $\textup{T}^*\mathbf{G}\mathcal{H},$ \cref{taut}, \cref{section}. That the pullback is an idempotent follows immeditately from the fact that $q$ satisfies the overcompleteness condition. Conversely, given a holomorphic vector bundle with Hermitian connection and propagator $(\mathcal{E}\to Y, \langle\cdot,\cdot\rangle, P),$ we let $\mathcal{H}_P$ be the Hilbert space given by square–integrable sections $\psi$ such that
\begin{equation}
    \psi=\int_M \psi(x)P(x,\cdot)\Omega^{\textup{top}}_x\;.
\end{equation}
We define $q:Y\to \textup{T}^*\textup{G}\mathcal{H}$ by
\begin{equation}
(q(x)\psi)(y)=\psi(x)P(x,y)
\end{equation}
for $\psi\in\mathcal{H}_P.$ That this is a projection follows from condition \ref{nor}. That $q(x)$ has rank $n$ follows from the fact that, if $v_{x,1},\ldots, v_{x,n}$ is a basis for $\mathcal{E}_x,$ then $v_{x,1}P(x_1,\cdot),\ldots,v_{x,n}P(x,\cdot)$ is a basis for the image of $q(x).$ That $q(x)$ is injective follows from condition \ref{norm2}. To see that $q(x)$ is Hermitian for $x\in M,$ we have that
\begin{align}
 &   \nonumber \langle \psi_1,q(x)\psi_2\rangle_{L^2}=\int_M\langle \psi_1(y),\psi_2(x)P(x,y)\rangle_y\,\Omega^{\textup{top}}_y
 \\&=\int_M\langle \psi_1(y)P(y,x),\psi_2(x)\rangle_y\,\Omega^{\textup{top}}_x=\langle \psi_1(x),\psi_2(x)\rangle_x\;.
\end{align}
In particular, this implies that $ \langle q(x)\psi_1,q(x)\psi_2\rangle_{L^2}=\langle\psi_1,q(x)\psi_2\rangle_{L^2},$ so that $q(x)$ is Hermitian. That $q$ satisfies the overcompleteness condition follows immediately from the fact that $P$ satisfies the idempotent condition.
\\\\The unitary equivalence between the Hilbert spaces of the two quantizations is given by
\begin{equation}
   \mathcal{H}\mapsto \mathcal{H}_P\;,\;\; \Psi\mapsto \psi\,,\,\psi(x)= q(x)\Psi\;.
\end{equation}
That these functors preserve $\Delta$ follows immediately from the definitions.
\end{proof}
\begin{remark}
In some examples there is more than one choice of $M$ that is compatible with $P.$ In such cases, the underlying vector spaces of the quantizations (ie. $\mathcal{H}_P$) are isomorphic. 
\end{remark}
\subsection{Examples}
\begin{exmp}
Returning to example \ref{r4}, an example different from $\textup{T}^*\mathbf{G}\mathcal{H}$ is given by $\mathbb{R}^4\cong \mathbb{C}^2$ with 
\begin{equation}
    \Omega=\frac{1}{\hbar}d(x_1+ix_2)\wedge d(y_1+iy_2)\;.
\end{equation}
Here, the holomorphic line bundle with Hermitian metric is the trivial one, and the connection 1–form is given by
\begin{equation}
    A=\frac{i}{2\hbar}\big((x_1+ix_2)\,d(y_1+iy_2)-(y_1+iy_2)\,d(x_1+ix_2)\big)\;.
\end{equation}
The idempotent section is given by
\begin{equation}
    P(z,w)=e^{-\frac{1}{4\hbar}(z\bar{z}+w\bar{w}-2\bar{z}w)}\;,
\end{equation}
where 
\begin{equation}
z=(x_1+ix_2)+i(y_1+iy_2)\;,\;\;\bar{z}=(x_1+ix_2)-i(y_1+iy_2)\;,
\end{equation}
and similarly for $w.$
Linear 2–dimensional submanifolds provide many submanifolds that can be used for $M,$ where for convergence of the integral we need that, generically, \begin{equation}\label{conv}
    \textup{Re}(z\bar{z})> 0\;.
\end{equation}
For example, we can let $M=\{x_2=y_2=0\},$ which is K\"{a}hler for the real part of $\Omega$ and Lagrangian for its imaginary part — this gives a Hermitian quantization. Sections that satisfy
\begin{equation}
    \psi=\int_{\{x_2=y_2=0\}} \psi(x)P(x,\cdot)\Omega^{\textup{top}}_x
\end{equation}
are K\"{a}hler–polarized on $\{x_2=y_2=0\}$ — this Hilbert space is generated by the sections
\begin{equation}\label{gene}
    \{w\mapsto P(z,w): z\in Y\}\;.
\end{equation}
In particular, for $w\in\{x_2=y_2=0\},$ the sections of \ref{gene} are polarized along 
\begin{equation}
\partial_{x_1}+i\partial_{y_1}\;.
\end{equation}
On the other hand, $M=\{x_2=y_1=0\}$ is Lagrangian for the real part of $\Omega$ and symplectic for its imaginary part.  \Cref{conv} doesn't hold on $M=\{x_2=y_1=0\}$ and therefore the integral doesn't converge — nevertheless, it is true that for $w\in \{x_2=y_1=0\}$ the sections of \ref{gene} are polarized along the real polarization
\begin{equation}
    \partial_{x_1}+\partial_{y_2}\;.
\end{equation}
This is the $+1$ eigenbundle for $K\vert_M,$ where 
\begin{align}
    &\nonumber I(\partial_{x_1})=\partial_{x_2}\,,\, I(\partial_{y_1})=\partial_{y_2}\;,
    \\&\nonumber J(\partial_{x_1})=\partial_{y_1}\,,\, J(\partial_{x_2})=\partial_{y_2}\;,
    \\& K(\partial_{x_1})=\partial_{y_2}\,,\,K(\partial_{x_2})=-\partial_{y_1}\;.
    \end{align}
\end{exmp}
$\,$\\The next example is motivated by the discussion surrounding equation 2.19 of \cite{brane}.
\begin{exmp}
We consider the example with $(Y,\Omega)$ given by
\begin{equation}
 S^2_{\mathbb{C}}:=\{(x,y,z)\in\mathbb{C}^3:x^2+y^2+z^2=1\}\;,\;\;\Omega=\frac{dx\wedge dy}{z}
\end{equation}  
which is isomorphic to $\textup{T}^*\mathbb{P}^1.$ We identify the tangent space at $(x,y,z)\in S^2_{\mathbb{C}}$ with all $(a,b,c)\in\mathbb{C}^3$ such that \begin{equation}
    (x,y,z)\cdot (a,b,c):=xa+by+cz=0\;.
\end{equation} 
The almost complex structure $I$ is given by multiplication by $i,$ while $J$ is given by the complexified cross product
\begin{equation}
    J_{(x,y,z)}(a,b,c)=(x,y,z)\times (a,b,c)=(yc-zb,za-xc,xb-ya)\;.
\end{equation}
Two choices for $M$ are $S^2,$ which corresponds to $x,y,z\in\mathbb{R},$ and the subvariety given by $x,y\in i\mathbb{R},\,z\in\mathbb{R}.$  The latter is the hyperboloid model of the unit disk $\mathbb{D}.$ These are both $J$–complex manifolds, but only $S^2$ is K\"{a}hler. The unit disk is such that $\Omega(JX,X)<0,$ ie. $-J$ is K\"{a}hler for $\mathbb{D}.$\footnote{On $\mathbb{D},$ $-J$ is the hyperk\"{a}hler almost complex structure.}
\\\\We can quantize $S^2$ and $\mathbb{D}$ using the Toeplitz quantization. We get quantization maps
\begin{align}
q_{S^2}:S^2\to\mathbb{P}\mathcal{H}_{S^2}\,,\;\;
q_{\mathbb{D}}:\mathbb{D}\to\mathbb{P}\mathcal{H}_{\mathbb{D}}\;,
\end{align}
and these analyically continue to
\begin{equation}
q_{S^2}:S_{\mathbb{C}}^2\to\textup{T}^*\mathbb{P}\mathcal{H}_{S^2}\,,\;\;
q_{\mathbb{D}}:S_{\mathbb{C}}^2\to\textup{T}^*\mathbb{P}\mathcal{H}_{\mathbb{D}}\;.
\end{equation}
Equivalently, we can analytically continue their Bergman kernels (ie.\ the orthogonal projections onto K\"{a}hler–polarized sections, see example 6.18 and section 9.3 of \cite{Lackman1}). Locally, on $S^2\backslash\infty\xhookrightarrow{} S^2_{\mathbb{C}},$ the idempotent corresponding to $M=S^2$ is given by
\begin{equation}
P_{S^2}(z_1,z_2)=\frac{1+\bar{z}_1z_2}{1+|z_1|^2}\;.
\end{equation} The two Hilbert spaces are different but we can still compare the $3$–point functions.\ A simple computation gives
\begin{equation}
\Delta_{S^2}\widebar{\Delta}_{\mathbb{D}}=1\;.
\end{equation}
This is related to the fact that the analytic continuations of the K\"{a}hler almost complex structures of $S^2$ and $\mathbb{D}$ differ by a sign. In a sense, quantizing $\mathbb{D}$ with a bad choice of almost complex structure is equivalent to quantizing $S^2.$
\end{exmp}

\end{document}